	\let\over=\@@over \let\overwithdelims=\@@overwithdelims
	\let\atop=\@@atop \let\atopwithdelims=\@@atopwithdelims
  	\let\above=\@@above \let\abovewithdelims=\@@abovewithdelims
\tikzstyle{int}=[draw, fill=blue!20, minimum size=2em]
\tikzstyle{dot}=[circle, draw, fill=blue!20, minimum size=2em]
\tikzstyle{init} = [pin edge={to-,thin,black}]
\newcommand{\matx}{\ensuremath{\mathcal{X}}}
\newcommand{\mate}{\ensuremath{\mathcal{E}}}
\newcommand{\matn}{\ensuremath{\mathcal{N}}}
\newcommand{\matf}{\ensuremath{\mathcal{F}}}
\newcommand{\matg}{\ensuremath{\mathcal{G}}}
\newcommand{\matp}{\ensuremath{\mathcal{P}}}
\newcommand{\mreals}{\ensuremath{\mathbb{R}}}
	\newcommand{\eqref}[1]{~(\ref{#1})}
	\def\mod{\mathop{\rm mod}}
\def\vect#1{\bm{#1}}
\def\dim{\mathop{\rm dim}}
\def\exp{\mathop{\rm exp}}
\def\lspan{\mathop{\rm span}}
\def\EE{\Expect}
\DeclareMathOperator\sign{\rm sign}
\def\Var{\mathrm{Var}}
\def\Cov{\mathrm{Cov}}
\def\PP{\mathbb{P}}
\def\eqdef{\triangleq}
\def\simiid{\stackrel{iid}{\sim}}
\newcommand{\zeros}{\mathbf{0}}
\newcommand{\stepa}[1]{\overset{\rm (a)}{#1}}
\newcommand{\stepb}[1]{\overset{\rm (b)}{#1}}
\newcommand{\stepc}[1]{\overset{\rm (c)}{#1}}
\newcommand{\stepd}[1]{\overset{\rm (d)}{#1}}
\newcommand{\I}{\text{(I)}}
\newcommand{\II}{\text{(II)}}
\newcommand{\III}{\text{(III)}}
\newcommand{\IV}{\text{(IV)}}
\newcommand{\bTheta}{\boldsymbol{\Theta}}
\newcommand{\btheta}{\boldsymbol{\theta}}
\newcommand{\bX}{\boldsymbol{X}}
\newcommand{\deltaa}{\delta_{\rm bv}}
\newcommand{\REB}{R_{\rm iid}^*}
\newcommand{\RComp}{R_{\rm det}^*}
\newcommand{\Poi}{\mathrm{Poi}}
\newcommand{\floor}[1]{{\left\lfloor {#1} \right \rfloor}}
\newcommand{\reals}{\mathbb{R}}
\newcommand{\naturals}{\mathbb{N}}
\newcommand{\integers}{\mathbb{Z}}
\newcommand{\complex}{\mathbb{C}}
\newcommand{\Expect}{\mathbb{E}}
\newcommand{\expect}[1]{\mathbb{E}\left[#1\right]}
\newcommand{\eexpect}[1]{\mathbb{E}[#1]}
\newcommand{\prob}[1]{\mathbb{P}\left[#1\right]}
\newcommand{\pprob}[1]{\mathbb{P}[#1]}
\newcommand{\TV}{{\rm TV}}
\newcommand{\Aff}{{\rm Aff}}
\newcommand{\Mu}{M}
\newcommand{\expects}[2]{\mathbb{E}_{#2}\left[ #1 \right]}
\newcommand{\eg}{e.g.\xspace}
\newcommand{\iid}{iid\xspace}
\newcommand{\ind}{ind.\xspace}
\newcommand{\pth}[1]{\left( #1 \right)}
\newcommand{\qth}[1]{\left[ #1 \right]}
\newcommand{\sth}[1]{\left\{ #1 \right\}}
\newcommand{\iiddistr}{{\stackrel{\text{\iid}}{\sim}}}
\renewcommand{\simiid}{\iiddistr}
\newcommand{\inddistr}{{\stackrel{\text{\ind}}{\sim}}}
\newcommand{\var}{\Var}
\newcommand{\Bern}{\text{Bern}}
\newcommand{\Binom}{\text{Binom}}
\newcommand{\Bino}{\Binom}
\newcommand{\iprod}[2]{\left \langle #1, #2 \right\rangle}
\newcommand{\Iprod}[2]{\langle #1, #2 \rangle}
\newcommand{\indc}[1]{{\mathbf{1}_{\left\{{#1}\right\}}}}
\newcommand{\Indc}{\mathbf{1}}
\definecolor{myblue}{rgb}{.8, .8, 1}
\definecolor{mathblue}{rgb}{0.2472, 0.24, 0.6} 
\definecolor{mathred}{rgb}{0.6, 0.24, 0.442893}
\definecolor{mathyellow}{rgb}{0.6, 0.547014, 0.24}
\newcommand{\calE}{{\mathcal{E}}}
\newcommand{\calF}{{\mathcal{F}}}
\newcommand{\calP}{{\mathcal{P}}}
\newcommand{\calX}{{\mathcal{X}}}
\newcommand{\diverge}{\to \infty}
\def\unifto{\mathop{{\mskip 3mu plus 2mu minus 1mu%
	\setbox0=\hbox{$\mathchar"3221$}%
	\raise.6ex\copy0\kern-\wd0%
	\lower0.5ex\hbox{$\mathchar"3221$}}\mskip 3mu plus 2mu minus 1mu}}
\def\simleq{{{\mskip 3mu plus 2mu minus 1mu%
	\setbox0=\hbox{$\mathchar"013C$}%
	\raise.2ex\copy0\kern-\wd0%
	\lower0.9ex\hbox{$\mathchar"0218$}}\mskip 3mu plus 2mu minus 1mu}}
\def\simleq{\lesssim}
\def\simgeq{{{\mskip 3mu plus 2mu minus 1mu%
	\setbox0=\hbox{$\mathchar"013E$}%
	\raise.2ex\copy0\kern-\wd0%
	\lower0.9ex\hbox{$\mathchar"0218$}}\mskip 3mu plus 2mu minus 1mu}}
\def\simgeq{\gtrsim}
\def\delchi{\delta_{\chi^2}}
\def\delTV{\delta_{\TV}}
\newtheorem{theorem}{Theorem}
\newtheorem{lemma}[theorem]{Lemma}
\newtheorem{corollary}[theorem]{Corollary}
\newtheorem{proposition}[theorem]{Proposition}
\theoremstyle{definition}
\newtheorem{example}{Example}
\newtheorem{remark}{Remark}
\newif\ifmapx
\edef\jobnametmp{\expandafter\string\csname ic_apx\endcsname}
\edef\jobnameapx{\expandafter\mkillslash\jobnametmp}
\edef\jobnameexpand{\jobname}
\long\def\apxonly#1{\ifmapx{\color{blue}#1}\fi}
\newcommand{\co}{\mathop{\mathrm{co}}}
\begin{document}
\ifpdf
\DeclareGraphicsExtensions{.pgf}
\graphicspath{{figures/}{plots/}}
\fi

\title{Dualizing Le Cam's method for functional estimation, with applications to estimating the unseens}

\author{Yury Polyanskiy and Yihong Wu\thanks{Y.P. is with the Department of EECS, MIT, Cambridge, MA, email: \url{yp@mit.edu}. Y.W. is with
the Department of Statistics and Data Science, Yale University, New Haven, CT, email: \url{yihong.wu@yale.edu}.}}

\maketitle

\begin{abstract}

Le Cam's method (or the two-point method) is a commonly used tool for obtaining statistical lower bound and especially popular for functional estimation problems. 
This work aims to explain and give conditions for the tightness of Le Cam's lower bound in functional estimation from the perspective of convex duality.
Under a variety of settings it is shown that the maximization problem that searches for the best two-point lower bound, upon dualizing, becomes a minimization problem that optimizes the bias-variance tradeoff among a family of estimators. 
For estimating linear functionals of a distribution our work strengthens prior results of Donoho-Liu \cite{DL91} (for quadratic loss) by dropping the H\"olderian assumption on the modulus of continuity. For exponential families our results extend those of
Juditsky-Nemirovski \cite{JN09} by characterizing the minimax risk for the quadratic loss under weaker assumptions on the exponential family.

We also provide an extension to the high-dimensional setting for estimating separable functionals. Notably, coupled with
tools from complex analysis, this method is particularly effective for characterizing the ``elbow effect'' -- the
phase transition from parametric to nonparametric rates.
As the main application of our methodology, we consider three problems in the area of ``estimating the unseens'', recovering the prior result of \cite{PSW17-colt} on population recovery and, in addition, obtaining two new ones:
\begin{itemize}
	\item \emph{Distinct elements problem}: Randomly sampling a fraction $p$ of colored balls from an urn containing $d$ balls in total, the optimal normalized estimation error of the number of distinct colors in the urn is within logarithmic factors of $d^{-\frac{1}{2}\min\{\frac{p}{1-p},1\}}$, exhibiting an elbow at $p=\frac{1}{2}$;
	
	\item \emph{Fisher's species problem}: Given $n$ independent observations drawn from an unknown distribution, the optimal normalized prediction error of the number of unseen symbols in the next (unobserved) $r \cdot n$ observations is within logarithmic factors of $n^{-\min\{\frac{1}{r+1},\frac{1}{2}\}}$, exhibiting an elbow at $r=1$.
\end{itemize}

\end{abstract}


\newpage

\tableofcontents

\newpage

\section{Introduction}
	\label{sec:intro}

\emph{Le Cam's method} (or the two-point method) is a commonly used tool for obtaining statistical lower bound \cite{LeCam73,Yu97}.
The rationale is that if two hypotheses are statistically indistinguishable, then the difference of their parameters presents a lower bound to the accuracy of any estimator.
Although Le Cam's method can be loose in estimating high-dimensional parameters as it may not capture the correct dependency on the dimension, 
for estimating scalar-valued functionals it often yields the tight minimax rate even when the underlying parameter is high-dimensional.
This work aims to explain and give conditions for the tightness of Le Cam's lower bound in functional estimation from the perspective of convex duality.

Let $\Theta$ and $\matx$ be measurable spaces and $P$ a transition probability kernel from $\Theta$ to $\matx$. 
Then $\{P_\theta=P(\cdot|\theta)\colon \theta\in \Theta\}$ is a parametric family of distributions on $\calX$. 
Let $\Pi$ be a given subset of $\matp(\Theta)$, the set of all probability measures on $\Theta$. 
Let $T(\pi)$ be a real-valued affine functional of $\pi \in \Pi$. 
Denote the observations by $\bX=(X_1,\ldots,X_n)$ and the latent parameters $\btheta=(\theta_1,\ldots,\theta_n)$, where 
conditioned on $\btheta$, $X_i$'s are independent and distributed as
\begin{equation}
X_i \inddistr P_{\theta_i}, \quad i=1,\ldots,n
\label{eq:model-main}
\end{equation}
 Furthermore, we shall focus on the following settings, which are commonly assumed in empirical Bayes and compound estimation problems, respectively \cite{Robbins51,zhang1997empirical,zhang2003compound}.
\begin{itemize}
	\item 
	$\theta_i \iiddistr \pi$ for some $\pi \in \Pi$.
	In this case, $X_i \iiddistr \pi P$, where $\pi P \triangleq \int_{\Theta} P_\theta \pi(d\theta)$ denotes the mixture distribution induced by the mixing distribution (prior) $\pi$.
	Given $X_1,\ldots,X_n$, the goal is to estimate the functional $T(\pi)$ or $\pi$ itself.

	\item 
	$\theta_i$'s are deterministic whose empirical distribution $\pi_{\btheta} \triangleq  \frac{1}{n} \sum_{i=1}^n \delta_{\theta_i}$ belongs to $\Pi$.
		Given $X_1,\ldots,X_n$, the goal is to estimate the functional $T(\pi_{\btheta})$ or the empirical distribution $\pi_{\btheta}$ itself.
		For affine functional $T(\pi)=\int h(\theta) \pi(d\theta)$, $T(\pi_{\btheta})= \frac{1}{n}\sum_{i=1} h(\theta_i)$ has a separable form and can be non-linear in the parameter $\btheta$.
	
\end{itemize}
The minimax quadratic risks of estimating the functional $T$ in the iid and deterministic setting are defined respectively as:
	\begin{align}
	\REB(n) \eqdef & ~ \inf_{\hat T} \sup_{\pi \in \Pi} \EE[|\hat T(X_1,\ldots,X_n) -
	T(\pi)|^2]\,, \label{eq:REB}	\\
	\RComp(n) \eqdef & ~ 	\inf_{\hat T} \sup_{\btheta: \pi_{\btheta} \in \Pi} \EE[|\hat T(X_1,\ldots,X_n) -
	T(\pi_{\btheta})|^2]\,.\label{eq:RComp}
	\end{align}


The main result of this paper is the following: 
Under appropriate technical conditions such as the convexity and compactness of the space $\Pi$, for affine functional $T$, 
 Le Cam's lower bound is tight up to universal constant factors for both iid and deterministic settings.
More precisely, we have
\begin{equation}
\REB(n) \asymp \RComp(n) \asymp \max_{\theta,\theta'\in\Theta} \sth{|T(\pi)-T(\pi')|^2: \chi^2(\pi P \| \pi'P ) \leq \frac{1}{n}},
\label{eq:mainresult-intro}
\end{equation}
where $\chi^2(\cdot\|\cdot)$ denotes the $\chi^2$-divergence. 
In the iid setting, this result strengthens the celebrated result of Donoho-Liu \cite{DL91} for linear functionals.
In addition, we show a counterpart of this characterization also holds for exponential families for estimating functionals linear in the mean parameters, 
where the $\chi^2$-divergence in \prettyref{eq:mainresult-intro} is replaced by the squared Hellinger distance,
extending the result of Juditsky-Nemirovski
\cite{JN09} to the quadratic risk and relaxing the assumptions. 
See \prettyref{sec:related} for more discussion.
Throughout the paper we focus on the expected risk under the quadratic loss (for which the $\chi^2$-divergence is a natural choice). As will be shown later, these results can be easily extended to high-probability risk bounds.

We now explain the intuition behind the main result \prettyref{eq:mainresult-intro}. 
In the iid setting where $X_i \iiddistr \pi P$, the lower bound is a straightforward application of Le Cam's two point method, since $\chi^2(\pi P \| \pi'P ) \leq \frac{1}{n}$ implies that the total variation of the product distributions  $(\pi P)^{\otimes n}$ and $(\pi' P)^{\otimes n}$
is bounded away from one and thus impossible to be tested reliably given the sample $X_1,\ldots,X_n$, leading to a lower bound on the order of $|T(\pi)-T(\pi')|^2$.
In the deterministic setting, the lower bound is shown by a generalized version of Le Cam's method using two priors (also known as fuzzy hypotheses testing \cite[Sec.~2.7.4]{Tsybakov09}), where we consider $\theta_i$'s drawn from the product distribution $\pi^{\otimes n}$ or $\pi'^{\otimes n}$ with appropriate truncation.

What is more surprising is perhaps the upper bound in \prettyref{eq:mainresult-intro}, which in fact holds \emph{without additional constant factors} in both the iid and deterministic settings. 
The key observation is that the maximization in \prettyref{eq:mainresult-intro} is a convex optimization problem, whose dual corresponds to a minimization problem that optimizes the bias-variance tradeoff of estimators of the following type:
\begin{equation}
\hat T = \frac{1}{n} \sum_{i=1}^n g(X_i).
\label{eq:hatT-emp}
\end{equation}
In the absence of a duality gap, this shows the achievability of Le Cam's lower bound
 by optimizing the choice of $g$.

The duality view underlying the main result \prettyref{eq:mainresult-intro} is in fact natural. Indeed, the classical minimax theorem in decision theory states that, under regularity assumptions (cf.~e.g.~\cite[Theorem 46.5]{Strasser85}) the minimax risk and the least favorable Bayes risk coincide, namely
\begin{equation}
\inf_{\hat{T}}\sup_{\theta \in \Theta} \Expect_\theta[( \hat{T}-T(\theta))^2] 
=
\sup_{\pi} \inf_{\hat{T}} \Expect_{\theta \sim \pi}[( \hat{T}-T(\theta) )^2],
\label{eq:minimax}
\end{equation}
where the supremum on the right is taken over all priors on $\Theta$.
This can also be interpreted from the duality perspective,\footnote{This follows from standard arguments in optimization by rewriting the left-hand side as $\inf_{\hat{T}} \{t: \Expect_\theta[( \hat{T}-T(\theta))^2] \leq t, \forall \theta \in \Theta \}$ and the Lagrange multipliers correspond to priors. When both $X$ and $\theta$ are finitely-valued, \prettyref{eq:minimax} is simply the duality of linear programming (LP).}
where the primal variables corresponds to (randomized) estimators and the dual variables correspond to priors.
However, the duality view of \prettyref{eq:minimax} is unwieldy except in special cases, due to the difficulty of finding the least favorable prior that maximizes the Bayes risk for large sample size.
In this vein, the more effective result of \prettyref{eq:mainresult-intro} can be viewed as approximate version of the general minimax theorem for functional estimation.

To produce concrete results of minimax rate for specific applications, one needs to evaluate the value of the maximum in \prettyref{eq:mainresult-intro}. 
Using tools from complex analysis, we do so for a number of problems and obtain new results on the optimal rate of convergence, characterizing, in particular, the ``elbow effect'', that is, the phase transition from parametric to nonparametric rates. 
As the main application of our methodology, we consider three problems in the area of ``estimating the unseens'', namely, \emph{population recovery}, \emph{distinct elements problem}, and \emph{Fisher's species problem}. In addition to recovering the prior result of \cite{PSW17-colt} on the optimal rate of population recovery, we establish the following new results:
\begin{itemize}

	\item Distinct elements problem: Randomly sampling a fraction $p$ of colored balls from an urn containing $n$ balls in total, the goal is to estimate the number of distinct colors in the urn \cite{RRSS09,Valiant11,WY2016sample}. We show that, as $n\to\infty$,
 the optimal normalized estimation error is within logarithmic factors of $n^{-\frac{1}{2}\min\{\frac{p}{1-p},1\}}$, exhibiting an elbow at $p=\frac{1}{2}$;
	
	\item Fisher's species problem: Given $n$ independent observations drawn from an unknown distribution, the goal is to predict the number of unseen symbols in the next (unobserved) $r \cdot n$ samples \cite{FCW43,ET76,OSW15}. We show that, as $n\diverge$,	
	the optimal normalized prediction error is within logarithmic factors of $n^{-\min\{\frac{1}{r+1},\frac{1}{2}\}}$, exhibiting an elbow at $r=1$.
\end{itemize}

We emphasize that the main focus of this paper to determine the minimax rate by means of convex duality 
without demonstrating an explicit choice of the optimal estimator.
This is conceptually distinct from existing explicit construction of 
estimators such as 
smoothed estimators in the context of the species problem \cite{OSW15} (which do not attain the optimal rate). 
Nevertheless, since the minimax rate in both iid and deterministic settings can be achieved by the estimator \prettyref{eq:hatT-emp} parameterized by $g$, 
the optimal choice of $g$ corresponds to the optimizer of certain convex optimization problem (cf.~\prettyref{eq:deltach_def}), which can be solved efficiently in the case of finite $\Theta$ and $\calX$ (e.g.~the population recovery problem). Even for continuous models, this infinite-dimensional problem can often be effectively discretized leading to computational efficient construction of optimal estimators. For example, for the distinct elements and species problems, the estimators can be constructed in polynomial time as solutions to certain linear programs (see \prettyref{eq:LP-de} and \prettyref{eq:species-bvLP} respectively).


Before discussing the related literature, 
let us mention that the duality-based method in this paper need not be limited to functional estimation. In a companion paper \cite{JPW20} we extend the methods to estimating the distribution itself (with respect to the total variation loss) in the context of the distinct elements problem. The connection to functional estimation is that 
estimating the distribution in total variation is equivalent to simultaneously estimating all bounded linear functionals; 
this view enables us to analyze Wolfowitz's \emph{minimum-distance estimators} \cite{Wolfowitz57} in the duality framework.

\subsection{Related work}
\label{sec:related}

A celebrated result of Donoho-Liu~\cite{DL91} relates the minimax rate of estimating linear functionals to the Hellinger
modulus of continuity.  For the density estimation models, under certain assumptions, it is shown that the minimax rate
coincides with the right-hand side of \prettyref{eq:mainresult-intro} with $H^2$ in place of the
$\chi^2$-divergence.\footnote{The resulting moduli of continuity are in fact the same up to constant factors, as we
show in \prettyref{prop:deltaproperty}.} However, the constant factors may not be universal and depend on the
problem or its hyper-parameters, thus
precluding the application to high-dimensional problems. More importantly, the proof (of the upper bound) in \cite{DL91}
is based on constructing an estimator via pairwise hypotheses tests, by means of a binary search on the functional
value. While this method can deal with general loss function, the limitation is that it assumes the H\"olderianity of
the modulus of continuity in order to show tightness.  We refer the readers to \prettyref{sec:dl_compare} for a detailed
comparison of the results.

The prior work that is closest to ours in spirit is that of Juditsky-Nemirovski \cite{JN09} (cf.~also the recent monograph \cite{JNbook}), where the main technology was also convex optimization and the minimax theorem.
As opposed to the squared loss, they considered the $\epsilon$-quantile loss, namely, an upper bound on the estimation accuracy that holds with probability at least $1-\epsilon$ for all parameters.
For exponential families, under certain convexity assumptions, it is shown that the minimax $\epsilon$-quantile risk is determined within absolute constant factors by the Hellinger modulus of continuity provided that $\epsilon$ is not too small.
We extend this result to quadratic risk under more relaxed assumptions (see \prettyref{sec:jn_compare} for details).
Note that the quadratic risk result cannot be obtained through the usual route of integrating the
high-probability risk bound, since the optimal estimator for an $\epsilon$-quantile loss potentially depends on $\epsilon$. On the other
hand, results on $\epsilon$-quantile loss can be obtained from the quadratic risk by sampling splitting and applying the median (although the more direct argument in \cite{JN09} achieves better constant factors).
Nevertheless, the main advantage of our approach lies in its versatility, as
witnessed, e.g., by the treatment of the deterministic setting.

Finally, let us mention that while the main objective of \cite{JN09} was to obtain rate-optimal estimators by means of convex programming, in this paper and similar to the program in \cite{DL91}, the characterization of the minimax risk by convex optimization is mostly used as a mathematical tool for determining the minimax rates, although it also leads to efficient construction of estimations for specific problems.

\subsection{Organization}


The rest of the paper is organized as follows.
\prettyref{sec:linear} presents the main result for the iid setting. 
We provide two examples: population recovery (\prettyref{sec:poprec}) and interval censoring (\prettyref{sec:ic}), which are finite-dimensional and infinite-dimensional application of the main theorem respectively.
\prettyref{sec:hd} extend the result to the deterministic setting for estimating separable functionals. The methods are then applied to the distinct elements problem (\prettyref{sec:de}) and Fisher's species extrapolation problem (\prettyref{sec:species}) to determine the minimax rates of convergence up to logarithmic factors.
Finally, in \prettyref{sec:exp} we extend the result to exponential families. 

\ifthenelse{\boolean{aos}}{
Due to space constraints, the proofs of \prettyref{prop:deltaproperty} and Theorems \ref{thm:de}--\ref{th:exp} are provided in \prettyref{seca:pf} of the supplementary material \cite{dual2-supp};  further technical results and proofs are collected in Sections \ref{app:lmm} and \ref{app:species-poisson} of \cite{dual2-supp}.
\prettyref{app:classical} discusses applications to classical problems of density estimation and the Gaussian white noise model, the latter constituting a simple example where the dualization can be carried out explicitly.
}{
\prettyref{seca:pf} contains the proofs of 
Theorems \ref{thm:de}--\ref{th:exp}; further technical results and proofs are collected in Appendices \ref{app:lmm} and \ref{app:species-poisson}. 
\prettyref{app:classical} discusses applications to classical problems of density estimation and the Gaussian white noise model, the latter constituting a simple self-contained example where the dualization of Le Cam's lower bound can be carried out explicitly. 
}

%

%


\section{Iid setting}\label{sec:linear}



	Recall that in the iid setting of \prettyref{eq:model-main}, the sample consists of 
$X_1,\ldots,X_n \simiid \pi P$. When $P$ is the identity kernel, 
$X_i$'s are directly drawn from $\pi$; otherwise, they are indirect observations.
The minimax quadratic risk of estimating $T(\pi)$ over $\pi\in\Pi$ is denoted by $\REB(n)$ in \prettyref{eq:REB}.

Define the modulus of continuity of functional $T$ with respect to various distances (and quasi-distances)
between distributions $\pi P$:
	\begin{align} \delta_{\chi^2}(t) &= \sup\{T(\pi')-T(\pi): \chi^2(\pi'P\|\pi P)\le t^2, \pi,\pi'\in\Pi\} 
		\label{eq:delchi2_def}\\
	   \delta_{H^2}(t) &= \sup\{T(\pi')-T(\pi):  H^2(\pi'P,\pi P)\le t^2, \pi,\pi'\in\Pi\} \\
	   \delta_{\TV}(t) &= \sup\{T(\pi')-T(\pi):  \TV(\pi'P,\pi P)\le t, \pi,\pi'\in\Pi\} \label{eq:deltv_def}
	\end{align}
where $\TV(F,G) = \sup_E |F(E)-G(E)|$ is the total variation, $H^2(F,G) = \int d\nu \left(\sqrt{{dF\over d\nu}} -
\sqrt{{dG\over d\nu}}\right)^2$ is the squared Hellinger distance (with $\nu$ being any dominating measure s.t. $F\ll \nu$ and
$G\ll \nu$, e.g. $\nu=F+G$). Finally, the $\chi^2$-divergence is defined as 
$\chi^2(F\|G) = \infty$ if $F\not \ll G$ and otherwise $\chi^2(F\|G) = \int dG 
\left( {dF\over dG}\right)^2-1$. We note that $\TV(F,G)$ and $H(F,G)$ are distances on $\matp$. For a signed measure
$\mu$ its total variation norm is denoted $\|\mu\|_{\TV}$, so that $\TV(F,G)=\|F-G\|_{\TV}$.


Our main result is the following:
\begin{theorem} \label{th:linear}
Suppose that $(\Theta,\matx,P,T, \Pi)$ satisfy the following assumptions:
\begin{enumerate}
\item[A1] The functional $\pi \mapsto T(\pi)$ is affine;
\item[A2] The set $\Pi$ is convex;
\item[A3] There exists a vector space of functions $\matf$ on $\matx$ such that $\matf$ contains constants and 
is dense in $L_2(\matx, \pi P)$ for every $\pi \in \Pi$;
\item[A4] There exists a topology on $\Pi$ such that:
	\begin{enumerate}
		\item[A4a] It is coarse enough that $\Pi$ is compact;
		\item[A4b] It is fine enough that $T(\pi)$, $\pi Pf$ and $\pi P(f^2)$ are continuous in $\pi\in \Pi$ for all $f\in \matf$.
	\end{enumerate}
\end{enumerate}
Then
	\begin{equation}\label{eq:linthm}
		{1\over (1+\sqrt{e})^2}\delta_{\chi^2}(\tfrac{1}{\sqrt{n}})^2 \le {R^*(n)} \le \delta_{\chi^2}(\tfrac{1}{\sqrt{n}})^2,
\end{equation}	
where the upper bound can be achieved by an estimator of the form $\hat T = \frac{1}{n} \sum_{i=1}^n g(X_i)$ for some $g\in\calF$.
\end{theorem}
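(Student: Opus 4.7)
The plan is to prove the two inequalities separately, with Le Cam's two-point method supplying the lower bound and convex duality supplying the upper.

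\textbf{Lower bound.} Pick $\pi_0,\pi_1\in\Pi$ approaching the supremum in $\delta_{\chi^2}(1/\sqrt n)$, and bound $R^*(n)$ from below by the Bayes risk under the two-point prior $\alpha\delta_{\pi_0}+(1-\alpha)\delta_{\pi_1}$. Writing $P=(\pi_0 P)^{\otimes n}$, $Q=(\pi_1 P)^{\otimes n}$, and $L=dQ/dP$, pointwise minimization of the posterior expected loss gives
\[
B_\alpha=(T(\pi_1)-T(\pi_0))^2\,\alpha(1-\alpha)\,\EE_P\!\left[\frac{L}{\alpha+(1-\alpha)L}\right].
\]
Cauchy--Schwarz applied to $1=\EE_P L=\EE_P\bigl[\sqrt{L/(\alpha+(1-\alpha)L)}\cdot\sqrt{L(\alpha+(1-\alpha)L)}\bigr]$ yields $\EE_P[L/(\alpha+(1-\alpha)L)]\ge 1/(\alpha+(1-\alpha)(1+\chi^2(Q\|P)))$. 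Optimizing over $\alpha\in[0,1]$ inserts the factor $(1+\sqrt{1+\chi^2(Q\|P)})^{-2}$. Finally, tensorization gives $1+\chi^2(Q\|P)=(1+\chi^2(\pi_1 P\|\pi_0 P))^n\le(1+1/n)^n\le e$, delivering the constant $(1+\sqrt e)^{-2}$.

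\textbf{Upper bound.} Convert $R^*(n)$ to its Bayesian form via the classical minimax theorem (whose hypotheses are supplied by A1--A4: A2 gives convexity of $\Pi$, A4a compactness, A4b continuity of $\pi\mapsto T(\pi)$, $\pi Pg$, $\pi Pg^2$, and A3 richness of $\mathcal F$):
\[
R^*(n)=\sup_{\mu\in\matp(\Pi)}B(\mu),\qquad B(\mu)=\EE_\mu[\Var(T(\pi)\mid X_1,\ldots,X_n)].
\]
I would then show $B(\mu)\le\delta_{\chi^2}(1/\sqrt n)^2$ for every $\mu$. The Bayes-optimal estimator $\hat T^*=\EE[T(\pi)\mid X^n]$ admits the representation $\hat T^*=\frac{1}{n}\sum g^*(X_i)$ for a $g^*\in\mathcal F$ obtained from the posterior-mean map (this is where affineness of $T$ and the density assumption A3 are essential). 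The reduction to the modulus uses the variance identity $\Var(Z\mid X)=\tfrac{1}{2}\EE[(Z_1-Z_2)^2\mid X]$ applied to two conditionally independent posterior draws $\pi_1,\pi_2$: any pair realized under this joint law satisfies $(T(\pi_1)-T(\pi_2))^2\le\delta_{\chi^2}(\sqrt{\chi^2(\pi_1 P\|\pi_2 P)})^2$ by definition of the modulus, and a Jensen-type argument (using concavity of $t^2\mapsto\delta_{\chi^2}(t)^2$, which in turn follows from convexity of $\chi^2$ and A2) collapses the outer expectation.

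\textbf{Main obstacle.} The hardest step is the final reduction from $B(\mu)$ to $\delta_{\chi^2}(1/\sqrt n)^2$ with no extraneous constant: posterior draws $(\pi_1,\pi_2)$ need not pointwise satisfy $\chi^2(\pi_1 P\|\pi_2 P)\le 1/n$, so the argument must pass through a variational/dual identity rather than a direct almost-sure bound. The mechanism is the Legendre representation $\chi^2(Q\|P)=\sup_{g}\{2\EE_Q g-\EE_P g^2\}-1$, which matches the bias-variance decomposition $(\EE_{\pi P}g-T(\pi))^2+\tfrac1n\Var_{\pi P}(g)$ of $\hat T=\tfrac1n\sum g(X_i)$ term-by-term; strong duality (justified by the regularity in A1--A4) then yields the exact constant $1$ in the upper bound and simultaneously identifies the optimal $g$ as an affine function of the likelihood ratio between the extremal mixtures attaining the modulus.
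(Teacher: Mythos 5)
Your lower bound is correct and is exactly the paper's argument: it is the Brown--Low constrained-risk inequality (a quantitatively sharp two-point bound), combined with tensorization of $\chi^2$ to obtain the constant $(1+\sqrt{e})^{-2}$.

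Your upper bound, however, has a genuine gap, and the route you chose is not the one the paper takes. Two specific problems. First, the claim that the Bayes-optimal estimator $\hat T^* = \EE[T(\pi)\mid X_1,\dots,X_n]$ ``admits the representation $\hat T^*=\frac{1}{n}\sum g^*(X_i)$'' is false in general: the posterior mean is typically a nonlinear function of the whole sample, not an empirical average (even under A1--A4 and for affine $T$). A Dirichlet-type prior on a multinomial model happens to produce a linear-in-histogram posterior mean, but the least favorable prior $\mu$ need not have that structure, and nothing in your argument forces it to. Second, your ``Jensen collapse'' from $B(\mu)$ to $\delta_{\chi^2}(1/\sqrt n)^2$ relies on concavity of $\delta_{\chi^2}^2$ as a function of $t^2$, which is not what Proposition~\ref{prop:deltaproperty} gives: what is true is that $t\mapsto\delta_{\chi^2}(\sqrt{t})$ is concave, i.e.~$\delta_{\chi^2}$ is concave in $t^2$, and squaring destroys that. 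You also correctly note that the conditional posterior pair $(\pi_1,\pi_2)$ need not satisfy $\chi^2(\pi_1 P\|\pi_2 P)\le 1/n$ pointwise, and your resolution (``strong duality yields the exact constant $1$'') is a hope rather than an argument.

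The missing idea is that one should \emph{not} go through the Bayes risk $\sup_\mu B(\mu)$ at all. The paper instead restricts \emph{a priori} to the (suboptimal-looking) class of empirical-average estimators $\hat T_g = \frac{1}{n}\sum_i g(X_i)$, bounds their risk by bias plus variance, and studies the quantity
\[
\deltaa(t)=\inf_{g\in\calF}\sup_{\pi\in\Pi}\Bigl\{\,t\,\sqrt{\Var_{\pi P}[g]}+|T(\pi)-\pi P g|\Bigr\}.
\]
The crux is then a minimax-swap for \emph{this} problem, which is not convex-concave as written because $|T(\pi)-\pi Pg|$ is convex, not concave, in $\pi$. The paper fixes this by lifting to the auxiliary variable $u=(\pi,\pi',\xi)\in\Pi\times\Pi\times[0,2]$, writing $|T(\pi)-\pi Pg|\le\sup_{\pi',\xi}\{T(\pi)-\pi Pg-\xi(T(\pi')-\pi'Pg)\}$, verifying the resulting $F_t(u,g)$ is concave-convex-\emph{like} (with a careful choice of $u_3$ involving the ratio $\lambda\xi_1/\xi_3$), applying Ky Fan's minimax theorem, and then evaluating the inner infimum over $g$ via the variational identity $\chi^2(\mu\|\nu)=\sup_g\{|\EE_\mu g-\EE_\nu g|^2:\Var_\nu g\le 1\}$ (over $g\in\calF$, using A3). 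That evaluation is what enforces $\xi=1$ and $\chi^2(\pi'P\|\pi P)\le t^2$, giving $\deltaa(t)\le\delta_{\chi^2}(t)$ exactly. You identified the right dual tool (the Legendre/variational representation of $\chi^2$), but the lifting trick with $\xi$ and the concave-convex-like verification is the step you need and do not have.
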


\begin{corollary}\label{cor:linthm} There exist an absolute constant $c>0$ with the following property. In the setting
of the previous Theorem for all $e^{-2n} \le \epsilon \le
{1\over 16}$ we have 
	\begin{equation}\label{eq:clt_0}
		{1\over c} \delchi\left(\sqrt{{1\over n}\ln {1\over \epsilon}}\right) \le \REB(n,\epsilon) \le c 
			\delchi\left(\sqrt{{1\over n}\ln {1\over \epsilon}}\right)\,, 
	\end{equation}			
where the $\epsilon$-risk (confidence interval) is defined as
\begin{equation*}
	\REB(n,\epsilon) \eqdef  \inf_{\hat T} \sup_{\pi \in \Pi} \inf\{\rho: \PP[|\hat T(X_1,\ldots,X_n) -
	T(\pi)| > \rho] \le \epsilon\}\,.
\end{equation*}
\end{corollary}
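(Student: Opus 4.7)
The plan is to obtain the corollary by combining two standard tools that promote quadratic risk statements to confidence statements: for the lower bound, Le~Cam's two-point method with the Bretagnolle--Huber inequality in place of the Pinsker-type bound that suffices for $L^2$ loss; for the upper bound, sample-splitting and median-of-means boosting applied to the estimator delivered by Theorem~\ref{th:linear}.

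For the \textbf{lower bound}, I would fix any $\pi,\pi' \in \Pi$ with $\chi^2(\pi'P\|\pi P)\le t^2$. Tensorization followed by $D \le \chi^2$ gives
\[
D\bigl((\pi'P)^{\otimes n}\|(\pi P)^{\otimes n}\bigr)=nD(\pi'P\|\pi P)\le n\chi^2(\pi'P\|\pi P)\le nt^2,
\]
and Bretagnolle--Huber then yields $1-\TV((\pi P)^{\otimes n},(\pi'P)^{\otimes n})\ge \tfrac12 e^{-nt^2}$. If some estimator $\hat T$ attained $\epsilon$-quantile risk $\rho<\tfrac12|T(\pi)-T(\pi')|$, the threshold test $E=\{\hat T>(T(\pi)+T(\pi'))/2\}$ would separate the two hypotheses with total error at most $2\epsilon$, forcing $\TV\ge 1-2\epsilon$. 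Combining the two bounds gives $nt^2\ge \log(1/(4\epsilon))$, and taking the contrapositive and the supremum over admissible pairs,
\[
\REB(n,\epsilon)\ \ge\ \tfrac12\,\delta_{\chi^2}\!\Bigl(\sqrt{\tfrac{1}{n}\log\tfrac{1}{4\epsilon}}\Bigr).
\]
For $\epsilon\le 1/16$ we have $\log(1/(4\epsilon))\ge \tfrac12\log(1/\epsilon)$, so the argument of $\delta_{\chi^2}$ differs from the target $\sqrt{(1/n)\log(1/\epsilon)}$ by at most a constant factor, which is absorbed via the scaling properties of $\delta_{\chi^2}$ recorded in \prettyref{prop:deltaproperty}.

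For the \textbf{upper bound}, I would split the sample into $k\eqdef \lceil C\log(1/\epsilon)\rceil\wedge n$ disjoint batches of size $m=\lfloor n/k\rfloor$ for a suitable absolute constant $C$. On each batch I apply the estimator of Theorem~\ref{th:linear} (with $n$ replaced by $m$) to obtain $\hat T_1,\dots,\hat T_k$; each has quadratic risk at most $\delta_{\chi^2}(1/\sqrt m)^2$, so Chebyshev gives $\Pr[\,|\hat T_i - T(\pi)|>3\delta_{\chi^2}(1/\sqrt m)\,]\le 1/9$. The median $\hat T$ fails only if a majority of the $\hat T_i$ fail; Hoeffding applied to the indicators bounds this probability by $e^{-ck}\le \epsilon$ by choice of $C$. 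Since $1/\sqrt m\le \sqrt{C\log(1/\epsilon)/n}$ (up to constants), monotonicity and the scaling of $\delta_{\chi^2}$ from \prettyref{prop:deltaproperty} absorb the constant $C$.

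The main obstacles I anticipate are two. First, controlling constants in the argument of $\delta_{\chi^2}$: turning $\sqrt{\log(1/(4\epsilon))/n}$ on the lower side and $\sqrt{C\log(1/\epsilon)/n}$ on the upper side into the target $\sqrt{(1/n)\log(1/\epsilon)}$ with only absolute losses requires a sublinear/concave-type scaling of $\delta_{\chi^2}$, which must come from \prettyref{prop:deltaproperty}. Second, the extreme confidence regime $\epsilon\in[e^{-2n},e^{-cn}]$ forces $k=n$ and singleton batches $m=1$, where the estimator of Theorem~\ref{th:linear} degenerates to $g(X_i)$; here the risk bound $\delta_{\chi^2}(1)$ must be compared to $\delta_{\chi^2}(\sqrt{(1/n)\log(1/\epsilon)})$, which is valid because in this regime $\sqrt{(1/n)\log(1/\epsilon)}$ is already at least a fixed constant, and monotonicity of $\delta_{\chi^2}$ closes the gap.
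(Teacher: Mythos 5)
Your proposal is correct in structure; the upper bound follows the paper's median-of-means/sample-splitting reduction essentially verbatim, but your lower bound takes a genuinely different and somewhat cleaner route. The paper argues through the Hellinger modulus: it uses $H^2$ tensorization together with \cite[Theorem 2.2]{Tsybakov09} to obtain $\REB(n,\epsilon)\ge\frac12\delta_{H^2}(\tfrac{1}{4n}\ln\frac1\epsilon)$, then passes back to $\delta_{\chi^2}$ via the chain of comparisons in \prettyref{eq:delta_all} and the subadditivity \prettyref{eq:delH2_sub}. You instead stay in the $\chi^2$/KL world throughout: exact KL tensorization plus $D\le\chi^2$ gives $D\bigl((\pi'P)^{\otimes n}\|(\pi P)^{\otimes n}\bigr)\le nt^2$, then Bretagnolle--Huber gives $1-\TV\ge\frac12 e^{-nt^2}$, and the threshold-test argument yields $\REB(n,\epsilon)\ge\frac12\delta_{\chi^2}\bigl(\sqrt{\tfrac1n\log\tfrac1{4\epsilon}}\bigr)$ for $\epsilon\le\tfrac14$. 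This avoids the detour through $\delta_{H^2}$ and, pleasantly, does not even need the $\epsilon\ge e^{-2n}$ restriction on the lower-bound side. The final constant manipulation (using $\log\frac{1}{4\epsilon}\ge\frac12\log\frac1\epsilon$ for $\epsilon\le\frac{1}{16}$ and the quadratic subadditivity \prettyref{eq:delchi_sub}) is correct.

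One detail deserves more care than your sketch gives it, though the paper's own exposition is equally terse here. In the extreme-confidence regime you describe (roughly $\epsilon\in[e^{-2n},e^{-c'n}]$ for the universal constant $c'<\frac12$ coming out of Hoeffding), the median of $n$ singleton-batch estimates fails with probability only $e^{-c'n}$, which can \emph{exceed} $\epsilon$; so the median estimator simply does not deliver the required confidence level there, and ``monotonicity of $\delta_{\chi^2}$'' is not the right tool to invoke. The correct repair is to observe that in this regime one should switch to the trivial constant estimator $\hat T\equiv\frac12(\sup_\Pi T+\inf_\Pi T)$, which achieves $\epsilon$-quantile risk $\le\Delta_{\max}/2$ with probability one for \emph{any} $\epsilon$, and then invoke the superlinearity bound \prettyref{eq:delta_superl} together with \prettyref{eq:delta_all} to get $\delta_{\chi^2}\bigl(\sqrt{\tfrac1n\log\tfrac1\epsilon}\bigr)\ge\frac12\delta_{H^2}\bigl(\sqrt{\tfrac1n\log\tfrac1\epsilon}\bigr)\gtrsim\Delta_{\max}$ when $\sqrt{\tfrac1n\log\tfrac1\epsilon}$ is bounded below by a constant. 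With that substitution the upper bound closes in the full range $e^{-2n}\le\epsilon\le\frac1{16}$ with an absolute constant.
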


Some remarks are in order:
\begin{enumerate}
\item If $\Theta$ and $\matx$ are finite, then $\matf$ can be taken to be all functions on $\matx$ and assumptions
A3 and A4 are automatic.
\item If $\matx$ is a normal topological space, then every probability measure $\nu$ is regular~\cite[IV.6.2]{DS58} and the
set $\matf$ of all bounded continuous functions is dense in $L_2(\matx, \nu)$, cf.~\cite[IV.8.19]{DS58}.
Other convenient choices of $\matf$ are all Lipschitz functions (and Wasserstein $W_1$-convergence), all polynomials, trigonometric
polynomials or sums of exponentials.
\item The continuity of $\pi P f$ under the weak topology on $\Pi$ 
can be assured by demanding the following (strong Feller) property for kernel $P$: For any bounded measurable $f$, $Pf$ is
	bounded continuous.
\end{enumerate}

To prove \prettyref{th:linear}, we start with several general properties and comparisons of various moduli of continuity.
\begin{proposition} 
\label{prop:deltaproperty}
Let $T(\pi)$ be affine in $\pi$. 
Assume that $\Pi$ is convex.
Then
\begin{enumerate}
	\item (Concavity) $\delta_{H^2}(\cdot)$, $\delta_{\TV}(\cdot)$ and $\delta_{\chi^2}(\sqrt{\cdot})$ are concave.
	\item (Subadditivity) For any $c\in[0,1]$ and $t\ge 0$ we have:
	\begin{align} \delta_{\TV}(ct)&\ge c\delta_{\TV}(t)\\
		   \delta_{H^2}(ct)&\ge c\delta_{H^2}(t)\label{eq:delH2_sub} \\
			\delta_{\chi^2}(ct) &\ge c^2 \delchi(t)\label{eq:delchi_sub} 
	\end{align}			
\item (Comparison of various $\delta$'s) For all $t\ge 0$ we have
\begin{equation}\label{eq:delta_all}
		{1\over 2}\delta_{H^2}(t) \le \delta_{\chi^2}(t) \le \delta_{H^2}(t) \le \delta_{\TV}(t) \le
	\delta_{H^2}(\sqrt{2t})\,. 
\end{equation}
\item (Superlinearity) Let $\Delta_{\max} \eqdef \sup\{T(\pi')-T(\pi):  \pi,\pi'\in\Pi\}$, then
\begin{equation}\label{eq:delta_superl}
		\delta_{H^2}(t) \ge \Delta_{\max} {t\over \sqrt{2}}\,.
\end{equation}	
\end{enumerate}
\end{proposition}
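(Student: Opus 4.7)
The plan is to derive all four parts from three ingredients: joint convexity of the divergences $\TV$, $H^2$, and $\chi^2$; elementary pointwise comparisons among them; and the fact that convex combinations of admissible priors remain in $\Pi$, whence the affinity of $T$ propagates through. The convexity of $\Pi$ is the essential use of the assumption, and affinity of $T$ then ensures the $T$-gap is linear in any convex combination of pairs.

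For concavity (Part 1), given two feasible pairs $(\pi_j,\pi_j')$ for thresholds $t_0,t_1$, I would form the convex combination $\pi_\lambda=\lambda\pi_1+(1-\lambda)\pi_0$ and $\pi_\lambda'=\lambda\pi_1'+(1-\lambda)\pi_0'$, both in $\Pi$. Its $T$-gap equals the linear combination of the individual gaps, while joint convexity of the divergence bounds the effective threshold. For $\TV$ the thresholds themselves combine linearly, yielding concavity of $\delta_{\TV}(t)$ in $t$; for $H^2$ and $\chi^2$ the \emph{squared} thresholds combine linearly, yielding concavity of $\delta_{H^2}(\sqrt{\cdot})$ and $\delta_{\chi^2}(\sqrt{\cdot})$. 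Subadditivity (Part 2) then follows by taking one of the thresholds to be $0$ (at which the modulus vanishes): this produces $\delta_{\TV}(ct)\ge c\delta_{\TV}(t)$ and, for the squared-threshold moduli, $\delta_{\chi^2}(ct)\ge c^2\delta_{\chi^2}(t)$, with the $H^2$ case obtained similarly.

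For the comparison of moduli (Part 3), three of the four inequalities follow immediately from the pointwise bounds $H^2\le\chi^2$, $\TV\le H$, and $H^2\le 2\TV$: in each case, a pair feasible for the tighter constraint is automatically feasible for the looser one, giving $\delta_{\chi^2}(t)\le\delta_{H^2}(t)\le\delta_{\TV}(t)\le\delta_{H^2}(\sqrt{2t})$. The remaining bound $\delta_{H^2}(t)\le 2\delta_{\chi^2}(t)$ is where the convexity of $\Pi$ reappears via a midpoint trick: given $(\pi,\pi')$ with $H^2(\pi P,\pi'P)\le t^2$, set $\bar\pi=(\pi+\pi')/2\in\Pi$; a direct calculation using the AM--GM inequality $(\sqrt f+\sqrt g)^2\le 2(f+g)$ yields $\chi^2(\pi P\|\bar\pi P)=\tfrac12\int\tfrac{(f-g)^2}{f+g}d\nu\le\int\tfrac{(f-g)^2}{(\sqrt f+\sqrt g)^2}d\nu=H^2(\pi P,\pi'P)\le t^2$, while affinity gives $T(\bar\pi)-T(\pi)=\tfrac12(T(\pi')-T(\pi))$; taking suprema finishes the bound.

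Finally, superlinearity (Part 4) uses that $H^2\le 2$ always, so every pair in $\Pi$ is feasible at threshold $\sqrt 2$ and hence $\delta_{H^2}(\sqrt 2)=\Delta_{\max}$; combined with $\delta_{H^2}(0)=0$ and the concavity from Part 1, linear interpolation yields $\delta_{H^2}(t)\ge (t/\sqrt 2)\Delta_{\max}$ on $[0,\sqrt 2]$ (and trivially above). I expect the most delicate step to be precisely this last one: the natural joint-convexity argument sketched above only delivers concavity of $\delta_{H^2}$ as a function of $t^2$ (equivalently, only the $c^2$-scaling in Part 2), whereas the stated superlinearity calls for concavity linearly in $t$. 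Closing this gap is the main technical obstacle and may require a sharper geometric construction operating on the Hellinger ``sphere'' of square-root densities rather than on the priors themselves.
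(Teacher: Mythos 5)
Your treatment of the $\TV$ and $\chi^2$ moduli is correct and coincides with the paper's proof: concavity comes from joint convexity of the divergences, subadditivity from concavity together with $\delta(0)\ge 0$, and in Part 3 the three outer inequalities of \eqref{eq:delta_all} follow from the pointwise bounds $H^2\le\chi^2$, $\TV\le H$, and $H^2\le 2\TV$, while the leftmost one is exactly the midpoint trick you describe. Your direct computation showing $\chi^2(\pi P\|\bar\pi P)\le H^2(\pi P,\pi'P)$ is the upper half of the Le Cam inequality the paper quotes from \cite[p.~48]{Lecam86}, so that part is fine.

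You have, however, accurately diagnosed the one genuine gap in your own argument: joint convexity of $H^2$ only gives concavity of $\delta_{H^2}(\sqrt{\cdot})$, hence the weaker $c^2$-scaling in place of \eqref{eq:delH2_sub} and a quadratic $\Delta_{\max}\,t^2/2$ in place of \eqref{eq:delta_superl}. The paper closes this by asserting that the Hellinger \emph{distance} $H(P,Q)$ itself, not merely $H^2$, is jointly convex in $(P,Q)$; under that assertion the constraint $H\le t$ carves out a convex set in $(\pi,\pi',t)$, so $\delta_{H^2}(\cdot)$ is concave in $t$, and Parts 2 and 4 then follow as you indicated. Before adopting this you should check the claim carefully: taking $P_1=\delta_a$, $Q_1=\delta_b$, and $P_0=Q_0=\delta_a$ gives $H\bigl(\tfrac12 P_1+\tfrac12 P_0,\tfrac12 Q_1+\tfrac12 Q_0\bigr)=\sqrt{2-\sqrt 2}\approx 0.765$, which strictly exceeds $\tfrac12 H(P_1,Q_1)+\tfrac12 H(P_0,Q_0)=\sqrt 2/2\approx 0.707$.

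A self-contained route to the linear bounds in Parts 2 and 4 is the ``interior interpolation'' you were gesturing at, and it operates directly on the priors rather than on the Hellinger sphere. Fix $\pi,\pi'\in\Pi$, set $\nu_{\pm\epsilon}=\tfrac12(\pi+\pi')\pm\tfrac{\epsilon}{2}(\pi'-\pi)\in\Pi$ for $\epsilon\in[0,1]$, and write $p_0,p_1$ for the densities of $\pi P,\pi'P$ with $\bar p=(p_0+p_1)/2$ and $\delta=(p_1-p_0)/2$. Affinity gives $T(\nu_\epsilon)-T(\nu_{-\epsilon})=\epsilon\bigl(T(\pi')-T(\pi)\bigr)$, while the pointwise estimate $(\sqrt{a+b}-\sqrt{a-b})^2\le 2b^2/a$ together with $\int(p_1-p_0)^2/(p_0+p_1)\le\min\{2,\;2H^2(\pi P,\pi'P)\}$ yields $H(\nu_\epsilon P,\nu_{-\epsilon}P)\le\sqrt 2\,\epsilon\,\min\{1,H(\pi P,\pi'P)\}$. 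Taking $\epsilon=t/\sqrt 2$ and optimizing over $(\pi,\pi')$ recovers \eqref{eq:delta_superl} with the stated constant, and taking $\epsilon=c/\sqrt 2$ recovers \eqref{eq:delH2_sub} up to an extra factor of $1/\sqrt 2$, which suffices for every downstream use of that inequality in the paper.
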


\begin{proof}
The first property follows from the convexity of $\TV(P,Q)$, $H(P,Q)$ and $\chi^2(P\|Q)$ in the pair $(P,Q)$. The second one follows from the first and the fact that $\delta(0)=0$.
For the third, we recall standard bounds (cf.~e.g.~\cite[Sec.~2.4.1]{Tsybakov09}): For any pair of distributions $P,Q$ we have
	\begin{equation}\label{eq:helTV}
		H^2(P,Q)/2 \le \TV(P,Q) \le H(P,Q)\,,
	\end{equation}	
	and
	\begin{equation}\label{eq:helchi}
		H^2(P,Q) \le 2 - {2\over \sqrt{1+\chi^2(P\|Q)}} \le \chi^2(P\|Q)\,.  
	\end{equation}
	Together~\eqref{eq:helTV} and~\eqref{eq:helchi} establish all inequalities in~\eqref{eq:delta_all} except the left-most
	one. For the latter we recall from~\cite[p.~48]{Lecam86}:
	\begin{equation}\label{eq:helLe Cam}
		{1\over 2}H^2(P,Q) \le \chi^2\pth{P\Big\|{P+Q\over2}} \le H^2(P,Q)\,.
	\end{equation}	
	Thus, for any $(\pi,\pi')$ that are feasible for the $\delta_{H^2}(t)$ problem, $\pi_0 \eqdef
	{\pi+\pi'\over 2}$ and $\pi_0'\eqdef \pi'$ are feasible for the $\delchi(t)$ problem,
	since $\chi^2(\pi_0' P\| \pi_0 P)\le t^2$ according to~\eqref{eq:helLe Cam}, and satisfy 
	$|T(\pi_0)-T(\pi_0')| = {1\over 2} |T(\pi)-T(\pi')| $.

Finally, \eqref{eq:delta_superl} follows from \prettyref{eq:delH2_sub} and the observation that $\delta_{H^2}(\sqrt{2}) = \Delta_{\max}$ since
$H^2 \le 2$ by definition.
\end{proof}

\begin{proof}[Proof of \prettyref{th:linear}]
The lower bound simply follows from the $\chi^2$-version of Le Cam's method. 
Consider a pair of distributions $\pi,\pi'$ such that $\chi^2(\pi'P\| \pi P) \le {1\over n}$ to be optimized. From the tensorization property of $\chi^2$-divergence we have
\begin{equation}\label{eq:tv_chi2_tenso}
		\chi^2((\pi'P)^{\otimes n}\| (\pi P)^{\otimes n}) = (1+\chi^2(\pi'P\| (\pi P)))^n-1 \le e -1.
\end{equation}		
Using Brown-Low's two-point lower bound \cite{BL96} and 
optimizing over the pair $\pi,\pi'$, we have
\begin{equation}
	\REB(n) \geq 
	\sup_{\pi,\pi'\in\Pi: \chi^2(\pi'P\| \pi P)
\le {1\over n}}	
	\frac{(T(\pi)-T(\pi'))^2}{\pth{1+\sqrt{1+\chi^2((\pi'P)^{\otimes n}\| (\pi P)^{\otimes n})	} }^2} \geq \frac{\delchi\pth{\frac{1}{\sqrt{n}}}^2}{(1+\sqrt{e})^2 }.
	\end{equation}
	
To prove an upper bound we consider estimators of the form
\begin{equation}\label{eq:emp_est}
		\hat T_g = {1\over n} \sum_{i=1}^n g(X_i)\,,
\end{equation}	
where $g\in \matf$.
We analyze the quadratic risk of this estimator by decomposing it into bias and variance part:
\begin{equation}\label{eq:emp_bv}
	\EE_{X_i \simiid \pi P} [|\hat T_g - T(\pi)|^2] \le {1\over n} \Var_{\pi
	P}[g] + |T(\pi) - \pi Pg|^2.
\end{equation}	
Taking worst-case $\pi$ and optimizing over $g$ we get
	$$ \sqrt{\REB(n)} \le \inf_{g\in\calF} \sup_{\pi \in \Pi}
		\left\{{1\over \sqrt{n}} \sqrt{\Var_{\pi P}[g]} +
		|T(\pi) - \pi Pg|\right\} = \deltaa(\tfrac{1}{\sqrt{n}})\,,$$ 
where 
	\begin{equation}\label{eq:deltach_def}
		\deltaa(t) \eqdef \inf_{g\in\calF} \sup_{\pi\in\Pi} 
		\left\{t \sqrt{\Var_{\pi P}[g]} +
		|T(\pi) - \pi Pg|\right\}.
\end{equation}		
The proof is completed by applying the next proposition.
\end{proof}

\begin{proposition}\label{prop:delminmax} Under the conditions of Theorem~\ref{th:linear}, we have
\begin{equation}
\deltaa(t) \le \delchi(t) \qquad \forall t\ge 0.
\label{eq:delachi}
\end{equation}
Furthermore, the supremum over $\pi,\pi'$ in the definition of $\delchi$ is achieved: There exist $\pi_*, \pi'_* \in
\Pi$ s.t. $ \delchi(t) = T(\pi'_*) - T(\pi_*)$ and $\chi^2(\pi'_* P \| \pi_* P) \le t^2$.
\end{proposition}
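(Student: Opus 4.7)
The plan is to dualize via Sion's minimax theorem and then reduce the resulting feasibility set to that of $\delchi$ by averaging. First, linearize via $|T(\pi)-\pi Pg|=\sup_{\epsilon\in\{\pm1\}}\epsilon(T(\pi)-\pi Pg)$ and $\max(a,b)=\sup_{\alpha\in[0,1]}\alpha a+(1-\alpha)b$ to write
$$\deltaa(t)=\inf_{g\in\calF}\ \sup_{\alpha\in[0,1],\ \pi_+,\pi_-\in\Pi}\Phi_\alpha(g,\pi_+,\pi_-),$$
where, setting $\sigma_\pi(g):=\sqrt{\Var_{\pi P}[g]}$,
$$\Phi_\alpha:=t[\alpha\sigma_{\pi_+}(g)+(1-\alpha)\sigma_{\pi_-}(g)]+\alpha(T(\pi_+)-\pi_+ Pg)+(1-\alpha)(\pi_- Pg-T(\pi_-)).$$
The function $\Phi_\alpha$ is convex in $g$ (a weighted sum of seminorms plus an affine functional), concave in $(\pi_+,\pi_-)$ for each fixed $\alpha$ (the variance being concave in the underlying measure, and the remaining terms affine), and linear in $\alpha$. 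Under A1--A4, $\Pi\times\Pi\times[0,1]$ is compact and the integrands are continuous in the appropriate variables, so Sion's minimax theorem permits swapping $\inf_g$ with the outer sup. Because $\sigma_\pi(\cdot)$ is invariant under $g\mapsto g+c$ whereas the linear-in-$g$ terms pick up $c(1-2\alpha)$, the inner infimum over $g$ is $-\infty$ unless $\alpha=1/2$.

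Setting $\alpha=1/2$ and pulling out $g$-independent terms,
$$\deltaa(t)=\sup_{\pi,\pi'\in\Pi}\Bigl\{\tfrac12(T(\pi)-T(\pi'))+\tfrac12\inf_{g\in\calF}\bigl[t(\sigma_\pi(g)+\sigma_{\pi'}(g))+(\pi'P-\pi P)g\bigr]\Bigr\}.$$
The inner infimum has the form $\inf_g[tN(g)+L(g)]$ for a seminorm $N=\sigma_\pi+\sigma_{\pi'}$ and a linear functional $L=(\pi'P-\pi P)(\cdot)$; standard convex-duality arguments show it equals $0$ when $|L(g)|\le tN(g)$ for every $g$ and $-\infty$ otherwise. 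Relabeling $(\pi,\pi')$ so that $T(\pi')\ge T(\pi)$ (the feasibility constraint being symmetric),
$$\deltaa(t)=\tfrac12\sup\bigl\{T(\pi')-T(\pi):\ \pi,\pi'\in\Pi,\ \rho(\pi,\pi')\le t\bigr\},\qquad \rho(\pi,\pi'):=\sup_{g\in\calF}\frac{|\pi'Pg-\pi Pg|}{\sigma_\pi(g)+\sigma_{\pi'}(g)}.$$

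The bridge to $\delchi$ is the lemma $\rho(\pi,\pi')^2\ge\chi^2(\pi'P\|\bar\pi P)$ with $\bar\pi:=(\pi+\pi')/2$. This is obtained by evaluating $\rho$ at the explicit test function $g_0(x):=(q(x)-p(x))/(p(x)+q(x))$, where $p,q$ are densities of $\pi P,\pi'P$ with respect to any common dominating measure $\nu$. A direct calculation gives
$$\pi'Pg_0-\pi Pg_0=\int\frac{(q-p)^2}{p+q}\,d\nu=2\chi^2(\pi'P\|\bar\pi P),$$
and
$$(\sigma_\pi(g_0)+\sigma_{\pi'}(g_0))^2\le 2\bigl[\pi Pg_0^2+\pi'Pg_0^2\bigr]=2\int\frac{(q-p)^2}{p+q}\,d\nu=4\chi^2(\pi'P\|\bar\pi P),$$
which yields $\rho(\pi,\pi')\ge\sqrt{\chi^2(\pi'P\|\bar\pi P)}$. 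By convexity of $\Pi$ (A2), $\bar\pi\in\Pi$; by affinity of $T$ (A1), $T(\pi')-T(\bar\pi)=\tfrac12[T(\pi')-T(\pi)]$. Therefore every pair $(\pi,\pi')$ feasible for the $\deltaa(t)$-formula produces a pair $(\bar\pi,\pi')$ feasible for $\delchi(t)$ of matching value $\tfrac12[T(\pi')-T(\pi)]$, concluding $\deltaa(t)\le\delchi(t)$. The attainment claim follows from compactness of $\Pi\times\Pi$ (A4a), continuity of $T$ (A4b), and lower semicontinuity of $(\pi,\pi')\mapsto\chi^2(\pi'P\|\pi P)$ on $\Pi\times\Pi$ (via a variational representation over $\calF$, which is dense in $L^2$ by A3 and whose second moments $\pi Pf^2$ are continuous by A4b).

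I expect the most delicate step to be the rigorous justification of Sion's theorem over the non-compact function space $\calF$; this will require a coercivity argument using translation invariance of $\sigma_\pi$ (one may restrict to $g\in\calF$ with $\pi Pg=0$ for some fixed reference $\pi$) together with level-set compactness of the resulting convex objective. The key calculation in the concluding lemma, while short, must be executed with care so that no constant factor is lost in the Cauchy--Schwarz step $(\sigma_\pi+\sigma_{\pi'})^2\le 2(\pi Pg_0^2+\pi'Pg_0^2)$.
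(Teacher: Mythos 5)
Your proof takes a genuinely different route from the paper's and, modulo one gap, it is a valid one. Where the paper upper-bounds $|T(\pi)-\pi Pg|$ asymmetrically by $\sup_{\xi\in[0,2],\,\pi'\in\Pi}\bigl[T(\pi)-\pi Pg-\xi(T(\pi')-\pi'Pg)\bigr]$, keeping the variance anchored at the single prior $\pi$ and obtaining the constraint $\chi^2(\pi'P\|\pi P)\le t^2$ directly from the variational characterization~\eqref{eq:chi_va}, you symmetrize the absolute value over $\alpha\in[0,1]$ and split the variance over two priors $\pi_\pm$; after the swap this yields $\deltaa(t)=\tfrac12\sup\{T(\pi')-T(\pi):\rho(\pi,\pi')\le t\}$ for a symmetric functional $\rho$, rather than $\delchi(t)$ itself, and you then need the extra bridge $\rho(\pi,\pi')^2\ge\chi^2\bigl(\pi'P\,\|\,\tfrac{\pi+\pi'}{2}P\bigr)$. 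That bridge is correct (your calculation with $g_0=(q-p)/(p+q)$ checks out, with the caveat that $g_0$ must be approximated in $L_2\bigl(\tfrac{\pi+\pi'}{2}P\bigr)$ by elements of $\calF$ using A3 and convexity of $\Pi$), and it plays exactly the role of the averaging inequality~\eqref{eq:helLe Cam} that the paper uses in Proposition~\ref{prop:deltaproperty}; the factor $\tfrac12$ lost in passing to $(\tfrac{\pi+\pi'}{2},\pi')$ is exactly compensated by the $\tfrac12$ in your dual. Your attainment argument is also fine: writing $\chi^2(\pi'P\|\pi P)=\sup_{g\in\calF}\bigl[2(\pi'Pg-\pi Pg)-\Var_{\pi P}[g]\bigr]$, a supremum of $\Pi\times\Pi$-continuous functions by A3--A4b, gives lower semicontinuity of $\chi^2$, and the rest follows from compactness.

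The gap is the appeal to Sion's minimax theorem. You justify the swap by observing that $\Phi_\alpha$ is concave in $(\pi_+,\pi_-)$ for each fixed $\alpha$ and linear in $\alpha$, but Sion requires quasi-concavity of $x\mapsto\Phi(g,x)$ in the \emph{joint} maximizing variable $x=(\alpha,\pi_+,\pi_-)$, and block-wise concavity does not imply joint quasi-concavity. Writing $\Phi_\alpha=\alpha\psi(\pi_+)+(1-\alpha)\phi(\pi_-)$ with $\psi(\pi)=t\sqrt{\Var_{\pi P}[g]}+T(\pi)-\pi Pg$ and $\phi(\pi)=t\sqrt{\Var_{\pi P}[g]}-T(\pi)+\pi Pg$, the bilinear coupling of $\alpha$ with the sign-indefinite, non-affine $\psi$ and $\phi$ is generically not quasi-concave; at the ordinary convex midpoint $\bigl(\tfrac{\alpha_1+\alpha_2}{2},\tfrac{\pi_1^++\pi_2^+}{2},\tfrac{\pi_1^-+\pi_2^-}{2}\bigr)$ one picks up cross-terms such as $\alpha_1 a(\pi_2^+)$ (with $a(\pi)=T(\pi)-\pi Pg$) that the average $\tfrac12(\Phi(u_1)+\Phi(u_2))$ does not contain, so the required inequality fails. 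This is precisely why the paper invokes Ky Fan's theorem (Theorem~\ref{thm:minimax}), whose ``concave--convex-like'' hypothesis is weaker, and verifies it with the \emph{reweighted} intermediate point $\pi_3^+=\tfrac{\lambda\alpha_1}{\alpha_3}\pi_1^++\tfrac{(1-\lambda)\alpha_2}{\alpha_3}\pi_2^+$ (and analogously for $\pi_3^-$ with weights $1-\alpha_i$), which restores exact additivity of the affine terms and uses concavity of $\sqrt{\Var}$ only for the variance terms. Your argument goes through if you replace Sion by Ky Fan and carry out this verification; as written, the swap is unjustified. Separately, your closing worry about coercivity in $g$ is unnecessary: both Sion's and Ky Fan's theorems require compactness only on the maximizing side, which is $\Pi\times\Pi\times[0,1]$ here, so $\calF$ need not be topologized at all.
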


Before proving the proposition, we recall the minimax theorem due to Ky Fan \cite[Theorem 2]{Fan53}:\footnote{There it is
stated for Hausdorff $X$, but this condition is not necessary, e.g.,~\cite{borwein1986fan}. Note that in defining
convex-concave-like property we mandate it hold for all $0\le t \le 1$ in~\eqref{eq:concon}, but it is also known that
minimax theorem holds for functions that only satisfy, e.g., $t=1/2$, see~\cite{konig1968neumannsche}.}
\begin{theorem}[Ky Fan]
\label{thm:minimax}
	Let $X$ be a compact space and $Y$ an arbitrary
set (not topologized). Let $f: X\times Y \to \reals$ be such that
for every $y \in Y$, $x\mapsto f(x, y)$ is upper semicontinuous on $X$. If $f$ is concave-convex-like on $X\times Y$,
then
\[
\max_{x\in X} \inf_{y \in Y} f(x,y) = \inf_{y \in Y} \max_{x\in X} f(x,y).
\]
\end{theorem}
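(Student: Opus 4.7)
The plan is to prove $\deltaa(t)\le\delchi(t)$ by dualizing the inf–sup defining $\deltaa(t)$ via Ky Fan's minimax theorem (Theorem~\ref{thm:minimax}), and to establish existence of the maximizing pair in $\delchi(t)$ through a compactness and lower-semicontinuity argument.

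For the main inequality, the integrand $t\sqrt{\Var_{\pi P}[g]}+|T(\pi)-\pi Pg|$ is convex in $g$ but not concave in $\pi$ because of the absolute value. I plan to linearize the absolute value via $|x|=\sup_{s\in[-1,1]}sx$ (introducing an auxiliary sign variable $s$) and to linearize the standard deviation via the Cauchy–Schwarz variational identity $\sqrt{\Var_{\pi P}[g]}=\sup\{(\rho-\pi P)(g):\rho\in\matp(\calX),\ \chi^2(\rho\|\pi P)\le 1\}$. Under A3--A4 these reformulations cast $\deltaa(t)$ as an $\inf_g\sup$ problem with integrand linear in $g$ and concave in the enlarged adversarial variables, while $\matf$ is a vector space and $\Pi$ is compact. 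Ky Fan's theorem then applies and swaps the order of the optimizations. After the swap, $\inf_{g\in\matf}$ of a linear function of $g$ is $-\infty$ unless the coefficient of $g$ vanishes, which yields a Lagrangian constraint of the form $(\pi' P-\pi P)(f)\le t\sqrt{\Var_{\pi P}(f)}$ for every $f\in\matf$. By the variational characterization $\chi^2(Q\|P)=\sup_f ((Q-P)f)^2/\Var_P(f)$, this constraint is exactly equivalent to $\chi^2(\pi' P\|\pi P)\le t^2$, so the dualized value equals $\delchi(t)$.

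For existence of the optimizer, the representation $\chi^2(Q\|P)=\sup_{h\in\matf}\{2Q(h)-P(h^2)-1\}$ (valid by the density of $\matf$ in $L_2(\pi P)$ from A3) exhibits $(\pi,\pi')\mapsto\chi^2(\pi' P\|\pi P)$ as a supremum of functions continuous in $(\pi,\pi')$ by A4b, and hence lower semicontinuous. Therefore the feasible set $\{\chi^2(\pi' P\|\pi P)\le t^2\}$ is closed in the compact set $\Pi\times\Pi$ (A4a), and continuity of $T(\pi')-T(\pi)$ (A4b) yields attainment of the supremum by the extreme value theorem. The main obstacle will be carrying out the Ky Fan dualization so that the constraint emerging after the swap is precisely the one-sided $\chi^2$-constraint, rather than a weaker (e.g.\ Hellinger- or TV-type) condition that would only yield the inequality up to a multiplicative constant; this requires that the chosen variational representation of the standard deviation dovetails cleanly with the variational representation of $\chi^2$ so that Cauchy–Schwarz is saturated at the dual optimum.
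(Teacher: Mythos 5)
Your proposal does not prove the statement you were assigned. The statement is Ky Fan's minimax theorem itself (\prettyref{thm:minimax}): for $X$ compact, $f(\cdot,y)$ upper semicontinuous, and $f$ concave-convex-like, $\max_{x}\inf_{y}f=\inf_{y}\max_{x}f$. What you have written is instead an outline of the proof of \prettyref{prop:delminmax} (the bound $\deltaa(t)\le\delchi(t)$ and attainment of the supremum in $\delchi$), and it explicitly \emph{invokes} Ky Fan's theorem as a black box to perform the inf--sup swap. As a proof of \prettyref{thm:minimax} this is circular: you are assuming the very minimax equality you are asked to establish. (For what it is worth, the paper does not prove this theorem either; it is quoted from Fan's 1953 paper, with the remark that the Hausdorff hypothesis there can be dropped.)

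A genuine proof of the statement would look nothing like your outline. The inequality $\max_{x}\inf_{y}f\le\inf_{y}\max_{x}f$ is immediate; the content is the reverse inequality, and the standard argument runs as follows. Supposing $\max_{x}\inf_{y}f(x,y)<c$, the closed sets $A_y=\{x: f(x,y)\ge c\}$ (closed by upper semicontinuity) have empty intersection, so by compactness of $X$ finitely many $A_{y_1},\dots,A_{y_n}$ already have empty intersection, i.e.\ $\min_{i\le n}f(x,y_i)<c$ for all $x$. One then uses the concave-like property in $x$ together with a separating-hyperplane (or induction) argument in $\reals^n$ to find convex weights $\lambda$ with $\sup_x\sum_i\lambda_i f(x,y_i)\le c$, and the convex-like property in $y$ to replace $\sum_i\lambda_i f(\cdot,y_i)$ by $f(\cdot,y_0)$ for a single $y_0\in Y$, giving $\inf_y\max_x f(x,y)\le c$; attainment of the outer $\max$ again uses compactness and upper semicontinuity. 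None of these ingredients (finite intersection property, the finite-dimensional separation step, the passage from mixtures to a single element via convex-likeness) appears in your proposal, so the assigned statement is left entirely unproved. Your sketch of the dualization of $\deltaa(t)$ is a reasonable plan for \prettyref{prop:delminmax}, but that is a different statement.
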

We recall that the function $f$ is concave-convex-like on $X\times Y$ if a) for any two $x_1,x_2\in X$ and $\lambda\in[0,1]$
there exists $x_3 \in X$ such that for all $y\in Y$:
\begin{equation}\label{eq:concon}
		\lambda f(x_1,y) + (1-\lambda) f(x_2,y) \le f(x_3,y)  
\end{equation}	
and b) for any two $y_1,y_2\in Y$ and $\lambda\in[0,1]$
there exists $y_3 \in Y$ such that for all $x\in X$:
$$ 
		\lambda f(x,y_1) + (1-\lambda) f(x,y_2) \ge f(x,y_3)  .
$$

\begin{proof}[Proof of Proposition~\ref{prop:delminmax}]
We aim to apply the minimax theorem in order to get a more convenient expression for $\deltaa(t)$. The function
$$ (\pi, g) \mapsto \sqrt{\Var_{\pi P}[g]} + |T(\pi) - \pi Pg| $$
satisfies all the conditions for applying \prettyref{thm:minimax}
except for the concavity in $\pi$ due to the last term (it is convex instead of concave). To mend this consider the following upper bound
	$$ |T(\pi) - \pi Pg| \le \sup_{\xi \in [0,2], \pi' \in \Pi}  T(\pi)-\pi Pg - \xi(T(\pi')-\pi'
	Pg)\,. $$
	Indeed, if $T(\pi) - \pi Pg > 0$, take $\xi=0$; otherwise, take $\pi'=\pi,\xi=2$.

So letting $u=(\pi,\pi',\xi)\in U \eqdef \Pi \times \Pi \times [0,2]$ we consider the following function on $U\times \matf$:
	$$ F_t(u,g) \eqdef T(\pi)-\pi Pg - \xi(T(\pi')-\pi' Pg) + t \sqrt{\Var_{\pi P}[g]}  $$
We claim it is concave-convex-like. Convexity in $g$ is easy: the term $|T(\pi) - \pi Pg|$ is clearly convex, whereas the convexity of $g \mapsto \sqrt{\Var_\mu[g]}$ follows from observation that without loss of generality we may assume $\EE_\mu[g]=0$ and then $\sqrt{\Var_\mu[g]} = \sqrt{\int g^2 d\mu} \triangleq \|g\|_{L_2(\mu)}$ is a norm (hence convex).

We proceed to checking the concave-like property of $F_t(u,g)$ in $u$. Define for convenience, 
	$$ a(\pi) \eqdef T(\pi) - \pi Pg, \qquad b(\pi) = t\sqrt{\Var_{\pi P}[g]} $$
It is clear that $a(\pi)$ is affine, whereas $b(\pi)$ is concave. Indeed,  $\sqrt{\cdot}$ is a concave and increasing
scalar function, whereas $\Var_\mu[g] = \mu(g^2) - (\mu g)^2$ is concave in $\mu$. So for $u=(\pi,\pi',\xi)$ we have
\begin{equation}\label{eq:ft_eq}
		F_t(u,g) = a(\pi) - \xi a(\pi') + b(\pi)\,.
\end{equation}	

Consider $u_1 = (\pi_1, \pi_1', \xi_1)$ and $u_2 = (\pi_2, \pi_2', \xi_2)$ and $\lambda \in [0,1]$. First, suppose that
$\xi_1=\xi_2=0$. We see that in this case
	$$ \lambda F_t(u_1,g)  + (1-\lambda) F_t(u_2,g) \le F_t(\lambda u_1 + (1-\lambda)u_2, g) $$
since from~\eqref{eq:ft_eq} we see that $F_t$ is concave in $\pi$. Then, taking $u_3 = \lambda u_1 + (1-\lambda) u_2$
satisfies~\eqref{eq:concon}. Next, assume that either $\xi_1 >0$ or $\xi_2 > 0$. Then define 
$$ \pi_3 \eqdef \lambda \pi_1 + (1-\lambda) \pi_2\,,\quad \pi_3' \eqdef {\lambda\xi_1\over \xi_3} \pi_1' + {(1-\lambda)\xi_2\over
\xi_3} \pi_2', \quad \xi_3 \eqdef \lambda \xi_1 + (1-\lambda) \xi_2 \,.$$
And set $u_3 = (\pi_3, \pi_3', \xi_3)$. We claim that 
\begin{equation}\label{eq:ft_cc}
		\lambda F_t(u_1,g)  + (1-\lambda) F_t(u_2,g) \le F_t(u_3, g)\,. 
\end{equation}	
Indeed, we have from affinity of $a(\cdot)$:
	$$ a\left({\lambda \xi_1\over \xi_3} \pi_1' + {(1-\lambda) \xi_2\over \xi_3} \pi_2' \right) = 
		{\lambda \xi_1\over \xi_3} a(\pi_1') + {(1-\lambda) \xi_2\over \xi_3} a(\pi_2')\,.$$
Therefore, we have
\begin{align*} \lambda a(\pi_1) + (1-\lambda) a(\pi_2) &= a(\pi_3)\\
	   \lambda \xi_1 a(\pi_1') + (1-\lambda) \xi_2 a(\pi_2') &= \xi_3 a(\pi_3')\\
	   \lambda b(\pi_1) + (1-\lambda) b(\pi_2) &\le b(\pi_3)\,.
\end{align*}
These three statements together with~\eqref{eq:ft_eq} prove~\eqref{eq:ft_cc}.

Knowing that $F_t$ is concave-convex-like, for applying the minimax theorem we only need to check that $u \mapsto F_t(u,g)$ is continuous for all $g$ and that $U$
is compact. This is satisfied by the assumption $A4$ of Theorem~\ref{th:linear}. Applying \prettyref{thm:minimax}, we have 
\begin{equation}\label{eq:ft_i2}
		\deltaa(t) \le \inf_{g\in \matf} \sup_{u \in U} F_t(u,g) = \inf_{g\in \matf} \max_{u \in U} F_t(u,g) =
	\max_{u \in U} \inf_{g\in \matf}  F_t(u,g)\,.
\end{equation}	
(Note that the rightmost maximum exists thanks to \prettyref{thm:minimax}.)

Next, to evaluate the rightmost term, fix $u=(\pi,\pi',\xi)\in U$ and consider the optimization
\begin{equation}\label{eq:ft_psidef}
		\psi_t(u) = \inf_{g \in \matf} (\xi \pi'  -\pi) Pg  +  t \sqrt{\Var_{\pi P}[g]}\,.
\end{equation}	
We claim that 
\begin{equation}\label{eq:ft_psi}
		\psi_t(u) = \begin{cases} -\infty, &\qquad  \xi \neq 1\\
				   -\infty, & \quad \xi = 1, \chi^2(\pi' P\| \pi P) > t^2\\
				   0, & \mbox{otherwise}
			\end{cases} 
\end{equation}
which implies the desired \prettyref{eq:delachi} by continuing~\eqref{eq:ft_i2}:
	$$ \max_{u \in U} \inf_{g\in \matf}  F_t(u,g) = \max \{T(\pi') - T(\pi): \chi^2(\pi' P\| \pi P) \le t^2, \pi \in
	\Pi, \pi' \in \Pi\}\,.$$

To prove~\eqref{eq:ft_psi}, we first recall that $\matf$ contains constants. Thus if $\xi \neq 1$, we have that the
first term in~\eqref{eq:ft_psidef} can be driven to $-\infty$, while keeping the second term zero,  by taking $g= c 1$
and $c\to \pm\infty$. So fix $\xi = 1$. Recall a variational characterization of the $\chi^2$-divergence:\footnote{For completeness,
here is a short proof of~\eqref{eq:chi_va}. First, assume $\chi^2(\mu\|\nu)<\infty$. Denoting $f={d\mu\over d\nu}$ and assuming without loss of
generality that $\EE_\nu g=0$ we have $|\EE_{\mu}[g] - \EE_{\nu} [g]|^2 = (\EE_\nu [fg])^2 \le \Var_{\nu}g \Var_\nu f$,
which completes the proof since $\Var_\nu f = \chi^2(\mu\|\nu)$. For the other direction, simply approximate $f$ by
elements of $\mathcal{G}$. If $\chi^2(\mu\|\nu)=\infty$, set $f_n=\min(f,n)$ and let $n\to\infty$.}
\begin{equation}\label{eq:chi_va}
		\chi^2(\mu\|\nu) =  \sup_{g\in \matg} \{ |\EE_\mu[g] - \EE_{\nu}[g]|^2: \Var_{\nu}[g] \le 1\}\,,
\end{equation}	
where $\matg$ is any subset that is dense in $L_2(\nu)$. Thus, if $\chi^2(\pi' P\| \pi P) > t^2$ (in particular, if $\pi' P
\not \ll \pi P$) there must exists $g_0\in \matf$ such that 
		$$ \pi' P g_0 - \pi P g_0 < -t \qquad \Var_{\pi P}[g_0] \le 1$$
		Thus taking $g = c g_0$ and $c\to\infty$ in~\eqref{eq:ft_psidef} we again obtain that
		$\psi_t(u)=-\infty$. In the remaining case, $\chi^2(\pi ' P\| \pi P) \le t^2$ and again
		from~\eqref{eq:chi_va} we have that for any $g \in \matf$
			$$ (\pi'  -\pi) Pg \ge -t \sqrt{\Var_{\pi P}[g]}\,,$$
		and thus $\psi_t(u) \ge 0$, while $0$ is achievable by taking $g=0$.
\end{proof}

\begin{proof}[Proof of Corollary~\ref{cor:linthm}] Consider two distributions $\pi$ and $\pi'$ such that 
	$H^2((\pi P)^{\otimes n}, (\pi' P)^{\otimes n}) =2-2\beta$ then from~\cite[Theorem 2.2]{Tsybakov09} we have that
	\begin{equation}\label{eq:clt_1}
		\REB(n,\epsilon) \ge {1\over 2} |T(\pi) - T(\pi')| 
\end{equation}	
	provided that $\beta^2 > 1-(1-2\epsilon)^2$. Thus, taking $\beta > \sqrt{4\epsilon}$ suffices. Recall the
	tensorization identity for $H^2$: 
		$$ 1-{1\over 2}H^2((\pi P)^{\otimes n}, (\pi' P)^{\otimes n}) = \left(1-{1\over 2}H^2(\pi P, \pi'
		P)\right)^n\,.$$
	Consequently, the bound~\eqref{eq:clt_1} holds whenever $H^2(\pi P, \pi'P) \le t_n^2$ with 
	$t_n^2 = 2-2(4\epsilon)^{1\over 2n}$. Note that for $e^{-2n} \le \epsilon \le
{1\over 16}$ we always have $2-2(4\epsilon)^{1\over 2n} \ge {1\over 4n} \ln {1\over \epsilon}$, implying
	$$ \REB(n,\epsilon) \ge {1\over 2} \delta_{H^2}({1\over 4n} \ln {1\over \epsilon})\,,$$
	from which the left-hand bound in~\eqref{eq:clt_0} follows due to~\eqref{eq:delH2_sub} and~\eqref{eq:delta_all}.

	The right-hand bound in~\eqref{eq:clt_0} follows from applying Theorem~\ref{th:linear} to the following generic
	observation:
		\begin{equation}\label{eq:clt_2}
			\REB(2 n L, \epsilon) \le 2\sqrt{\REB(n)}\,, \qquad \forall L\ge 8 \ln {2\over \epsilon}\,.
\end{equation}		
	This follows from the standard ``median trick'': Consider a sample of size $n_1 = 2nL$ and denote by  $\hat T_1, \ldots, \hat T_{2L}$ the result of
	evaluating best quadratic-risk estimator on $2L$ independent subsamples, each of size $n$. Let $\rho =
	2\sqrt{\REB(n)}$. Then from Chebyshev's inequality we have for each $1\le j \le 2L$
		$$ p \eqdef \PP[|T(\pi) - \hat T_j| \ge \rho] \le {1\over 4}\,.$$
	Define the estimator $\hat T$ to be the median of $(\hat T_1,\ldots,\hat T_{2L})$. Then from Hoeffding's
	inequality we have
		$$ \PP[|T(\pi) - \hat T| > \rho] \le \sum_{k\ge L} {2L \choose k} p^k (1-p)^{2L-k} \le e^{-L/8}\,.$$
	From the last statement, we conclude that~\eqref{eq:clt_2} must hold.
\end{proof}

\subsection{Comparison to Donoho-Liu \cite{DL91}}
\label{sec:dl_compare}
Theorem~\ref{th:linear} is very similar to a celebrated result of Donoho-Liu~\cite{DL91}, who showed
that in the same setting, as $n\to\infty$, one has
\begin{equation}\label{eq:donoho_liu}
	C_0 \delta_{H^2}(\tfrac{1}{\sqrt{n}})^2 \leq 	\REB(n) \leq C_1 \delta_{H^2}(\tfrac{1}{\sqrt{n}})^2\,,
\end{equation}
for some constants $C_0,C_1$, i.e.~that the minimax rate for estimating the linear functionals $T$ coincides with \textit{modulus of continuity} of $T$ with respect
to Hellinger distance.
In view of~\eqref{eq:delta_all}, $\delta_{H^2} \asymp \delta_{\chi^2}$ and thus~\eqref{eq:donoho_liu} seems like 
exactly what Theorem~\ref{th:linear} claims. 

The differences, however, are three-fold. First, the technical assumptions required in~\cite{DL91} are: A1, A2 (from Theorem~\ref{th:linear}), boundedness
$\sup_{\pi \in \Pi} |T(\pi)| < \infty$ and \textit{H\"olderianity} of $\delta_{H^2}$:
	$$ \delta_{H^2}(t) = C t^r + o(t^r) $$
	for some $C,r>0$ as $t\to 0$. Barring the latter, the assumptions are weaker than in \prettyref{th:linear}.

The second, and crucial, difference is the fact
that~\eqref{eq:donoho_liu} only holds for a fixed statistical problem $(\Theta, \calX, P, T, \Pi)$ and as $n\to \infty$,
i.e.~the proportionality constants in~\eqref{eq:donoho_liu} are not uniform and can be \textit{problem dependent}. This
precludes one to analyze questions where the problem size (e.g.~dimension) varies with the sample size $n$, etc. For example, in
the population recovery problem considered in Section~\ref{sec:poprec} for any fixed $d$ and $n\to\infty$ we get parametric rate $\REB(n) \asymp {1\over
n}$. To get interesting phase-transitions one needs to let $d$ slowly grow -- and this cannot be handled in the setup
of~\cite{DL91} where the problem is first fixed and then analyzed in the large-sample asymptotics of $n\to\infty$. 

The third difference is the method of proof. While we (indirectly, via duality) show the existence of a good linear
estimator, Donoho and Liu construct an estimator via binary search, which entails decomposing the problem into a dyadic sequence of testing problems between two composite hypotheses of the form $\{\pi: T(\pi) < a\}$ vs $\{\pi: T(\pi) > b\}$. The advantage of their
method is that it can handle loss functions other than the quadratic loss. The advantage of our method is that 
 our estimator is simply an empirical average of a certain function, that, in discrete cases, can be efficiently
 pre-computed by convex or linear programming. Furthermore, even for continuous models, the infinite-dimensional LP can be effectively ``finite-dimensionalized'' leading to computational efficient construction of optimal estimators (see Theorems \ref{thm:de} and \ref{thm:species} for examples).

To sum up, in the iid setting, the chief advantage of our method is in explicit universal constants
comparing $\REB(n)$ and $\delta_{\chi^2}(\frac{1}{\sqrt{n}})$. However, perhaps, the main advantage is 
(as we show in \prettyref{sec:linear}) that our methods extend to the deterministic setting, when one's goal is to estimate a functional
of high-dimensional parameters.

\subsection{Application: Population recovery}\label{sec:poprec}
\def\delchii{\delchi^{(i)}}
\def\delchif{\delchi^{(1)}}
\def\delchis{\delchi^{(2)}}
\def\delchit{\delchi^{(3)}}

%
%

In the problem of lossy population recovery \cite{DRWY12,WY12}, let $\mu$ denote an unknown distribution on the $d$-dimensional Hamming space $\{0,1\}^d$.
For $n$ iid random binary strings $A_1,\ldots,A_n$ drawn from $\mu$, we observe their erased version $B_1,\ldots,B_n \in \{0,1,?\}^d$., where each bit is erased with probability $\epsilon$, and the goal is to estimate the distribution $\mu$. 
It has been shown in \cite{DRWY12} (cf.~\cite[Appendix A]{PSW17-colt})
estimating the entire distribution $\mu$ in the sup norm can be reduced to estimating the weight of the all-zero string $\mu(\zeros)$ in terms of both sample and time complexity.
Furthermore, for large $d$ it is shown in \cite{PSW17-colt} that summarizing each string into its number of 1's is ``almost sufficient'' (in the sense that it changes the minimax rate by no more than logarithmic factors), as the number of 0's provides negligible information for estimating $\mu(\zeros)$.

After these reductions, we arrive at a specialization of the setup in Theorem~\ref{th:linear}. 
Let $\theta_i$ denote the number of 1's in the unerased string $A_i$. Then $\theta_i \iiddistr \pi$ for some distribution $\pi$ on $\calX=\Theta =\{0,\ldots, d\}$, where $T(\pi)\triangleq \pi(0)=\mu(\zeros)$ is the quantity to be estimated on the basis of $X_1,\ldots,X_n$, where $X_i$ denotes the number of 1's in the erased string $B_i$. Note that conditioned on $\theta$, we have
	\begin{equation}
	 X_i \iiddistr P_\theta = \Bino(\theta,1-\epsilon)\,.
	\label{eq:P3}
	\end{equation}

The minimax risk of population recovery, denoted by $R^*(n,d)$, has been characterized within logarithmic factors in~\cite{PSW17-colt}. 
Next we deduce this result from the general \prettyref{th:linear}, which boils down to characterizing the corresponding $\chi^2$-modulus of continuity $\delchi(t) = \delchi(t,d)$. 
The following result can be distilled from \cite{PSW17-colt} (a proof is given in \prettyref{app:lmm} for completeness):
\begin{lemma}\label{lmm:horo} For any $t\ge 0, d\ge 1$ we have
\begin{equation}\label{eq:horo1}
		\delchi(t,d) \le t^{\min(1, {1-\epsilon\over\epsilon})}\,.
\end{equation}	
	Conversely, for $\epsilon \le {1\over 2}$, $\delchi(t,d) \ge {t\over 2\sqrt{2}}$; 
	for $\epsilon > 1/2$ there exists $t_0 = t_0(\epsilon)$ and $C=C(\epsilon)$ such that
\begin{equation}\label{eq:horo2}
			\delchi(t,d) \ge C \left(t\over \ln {1\over t}\right)^{1-\epsilon \over \epsilon},
\end{equation}		
	provided that $t \le t_0$ and $d \ge C \ln^2 {1\over t}$. 
\end{lemma}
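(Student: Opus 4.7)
The plan is to analyze $\delchi(t,d)$ via probability generating functions (PGFs) and tools from complex analysis. For $\pi$ supported on $\{0,\ldots,d\}$, let $\phi_\pi(w) \eqdef \sum_{k} \pi(k)\,w^k$. The key identity is that $T(\pi) = \pi(0) = \phi_\pi(0)$, while the binomial kernel $P_\theta = \Bino(\theta, 1-\epsilon)$ acts on PGFs by the affine substitution $\phi_{\pi P}(z) = \phi_\pi(\epsilon + (1-\epsilon)z)$. Setting $g(w) \eqdef \phi_\pi(w) - \phi_{\pi'}(w)$, the problem reduces to bounding $|g(0)|$ from information about $g$ on the image circle $C_\epsilon \eqdef \{w\in\complex : |w-\epsilon|=1-\epsilon\}$, which is internally tangent to the unit circle $\partial D$ at $w=1$.

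For the upper bound, a Cauchy--Schwarz step converts the $\chi^2$ constraint into uniform control on the unit disk: for every $|z|\le 1$,
\[
|\phi_{\pi P}(z) - \phi_{\pi' P}(z)|^2 \;\le\; \chi^2(\pi' P\,\|\,\pi P) \,\sum_x \pi P(x)\,|z|^{2x} \;\le\; t^2,
\]
so $|g(w)|\le t$ on the closed disk $\overline{D_\epsilon}$ bounded by $C_\epsilon$; trivially $|g(w)|\le 2$ on $|w|\le 1$. If $\epsilon\le 1/2$, then $0\in D_\epsilon$ and the maximum modulus principle delivers $|g(0)|\le t$ directly. If $\epsilon>1/2$, then $0$ lies in the crescent $\Omega \eqdef D\setminus \overline{D_\epsilon}$, and the two-constants (Phragm\'en--Lindel\"of) theorem yields $|g(0)| \le t^{\omega(0, C_\epsilon)}\cdot 2^{\omega(0,\partial D)}$, where $\omega$ is harmonic measure in $\Omega$. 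To evaluate it, I map $\Omega$ conformally to a strip via $\psi(w)=1/(1-w)$: this sends the tangent point $w=1$ to $\infty$, $\partial D\setminus\{1\}$ to the line $\{\mathrm{Re}\,z=1/2\}$, $C_\epsilon\setminus\{1\}$ to $\{\mathrm{Re}\,z=1/(2(1-\epsilon))\}$, and $0\mapsto 1$. A direct computation in the resulting strip gives $\omega(0,C_\epsilon)=(1-\epsilon)/\epsilon$, which is precisely the exponent claimed in \prettyref{eq:horo1}.

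The easy lower bound for $\epsilon\le 1/2$ follows without further work from \prettyref{prop:deltaproperty}: the superlinearity bound \prettyref{eq:delta_superl} gives $\delta_{H^2}(t) \ge t\,\Delta_{\max}/\sqrt{2}$ with $\Delta_{\max}=1$ (attained by $\pi=\delta_0$, $\pi'=\delta_1$), and the comparison chain \prettyref{eq:delta_all} then yields $\delchi(t)\ge t/(2\sqrt{2})$.

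The main obstacle is the lower bound \prettyref{eq:horo2} for $\epsilon>1/2$, which requires an explicit construction. Dualizing the upper-bound argument, I seek a signed measure $\nu=\pi-\pi'$ on $\{0,\ldots,d\}$ whose PGF $\hat\nu$ is uniformly small on $C_\epsilon$ (making $\chi^2(\pi'P\|\pi P)$ small) while $\hat\nu(0)=\pi(0)-\pi'(0)$ is as large as possible. Transporting to the strip via the same map $\psi$ turns this into an extremal polynomial problem: find a polynomial of controlled degree that is small on the line $\mathrm{Re}\,z=1/(2(1-\epsilon))$ yet achieves a prescribed magnitude at the interior point $z=1$. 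A Chebyshev-type extremal polynomial (transplanted from the unit disk) is the natural candidate; the requirement that the resulting $\nu$ be a \emph{bona fide} difference of probability measures on $\{0,\ldots,d\}$, rather than an arbitrary signed measure, is what forces the hypothesis $d\gtrsim \ln^2(1/t)$ (so that the polynomial degree $\sim \ln(1/t)$ can be accommodated after pullback through $\psi$) and accounts for the logarithmic loss $(t/\ln(1/t))^{(1-\epsilon)/\epsilon}$ relative to the upper bound.
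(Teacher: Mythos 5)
Your upper‐bound argument is the same complex‐analytic route as \cite[Prop.~9]{PSW17-colt}, which the paper merely cites. The Cauchy--Schwarz reduction you use to turn the $\chi^2$ constraint into $|g|\le t$ on the image disc $\overline{D_\epsilon}$ is a clean, self-contained substitute for the paper's detour through $\delta_{\TV}$ (via Proposition~\ref{prop:deltaproperty}), and your harmonic-measure computation on the lune $D\setminus\overline{D_\epsilon}$ giving exponent $(1-\epsilon)/\epsilon$ is correct. One small slip: the two-constants theorem yields $|g(0)|\le t^{(1-\epsilon)/\epsilon}\,2^{(2\epsilon-1)/\epsilon}$, so your argument carries a spurious constant factor $\le 2$ that the lemma as stated does not have; harmless for the applications but worth noting. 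The $\epsilon\le 1/2$ lower bound via \eqref{eq:delta_superl} and \eqref{eq:delta_all} is exactly the paper's argument.

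The genuine gap is \eqref{eq:horo2}, which you only sketch and where your description of the construction is off. The extremizer used in \cite[Lemma~12]{PSW17-colt} (reproduced in this paper inside the proof of Theorem~\ref{thm:de}) is not a Chebyshev polynomial transplanted to the strip: one takes the Taylor coefficients $\Delta_k=[z^k]f(z)$ of the \emph{analytic, non-polynomial} function $f(z)=(1-\alpha)\,\beta^{(1+\alpha z)/(1-\alpha z)}-(1-\alpha)\,\beta^{(1+\alpha)/(1-\alpha)}$, with $\beta\asymp\delta\log(1/\delta)$ and $1-\alpha\asymp 1/\log(1/\delta)$. More importantly, the step your sketch elides entirely is how to turn the resulting signed perturbation into \emph{bona fide} probability distributions: one superimposes $\pm\Delta_k$ on a geometric background $\mu_k=(1-\alpha)\alpha^k$ and must check $\pi_k=\mu_k+\Delta_k\ge 0$ and $\pi'_k=\mu_k-\Delta_k\ge 0$ for all $k$, and control the tail of the Taylor series beyond $k=d$. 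That positivity and tail bookkeeping is exactly where the hypothesis $d\gtrsim\ln^2(1/t)$ (not $\ln(1/t)$, as your heuristic suggests) and the logarithmic loss in \eqref{eq:horo2} actually come from. Without pinning down a specific admissible pair $(\pi,\pi')$ and carrying out the positivity and Hellinger estimates, the lower bound is not established.
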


Applying the general \prettyref{th:linear} together with \prettyref{lmm:horo}, we obtain the following characterization of the minimax risks, where the rate of convergence exhibits an elbow effect at erasure probability $\epsilon=\frac{1}{2}$:
\begin{corollary}[\cite{PSW17-colt}] 
~~~~~
\begin{itemize}
	\item If $\epsilon \in (0,\frac{1}{2}]$, then for any $d\geq 1$,
		$$ {1\over 8 (1+\sqrt{e})^2 n}  \le  R^*(n,d) \le \frac{1}{n}.$$
		
\item If $\epsilon \in (\frac{1}{2},1)$, then there exists a constant $C=C(\epsilon)>0$ such that we have 
		$$ {1\over C} (n \log^2 n)^{-{1-\epsilon\over \epsilon}} \le  R^*(n,d) \le
		n^{-{1-\epsilon\over \epsilon}}, $$
	where the lower bound holds provided that $d\ge C n \log^{4} n$.
		\end{itemize}
	
\end{corollary}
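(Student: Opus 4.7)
The strategy is to apply \prettyref{th:linear} directly to the population recovery setting and then substitute the moduli bounds provided by \prettyref{lmm:horo}. The relevant specialization is $\Theta = \calX = \{0,1,\ldots,d\}$ with kernel $P_\theta = \Bino(\theta, 1-\epsilon)$, linear functional $T(\pi)=\pi(0)$, and $\Pi = \matp(\Theta)$ the full probability simplex; this reduction was recorded in the discussion preceding the corollary. All of the assumptions A1--A4 are immediate: $T$ is affine, $\Pi$ is convex and compact, and since $\Theta$ and $\calX$ are finite the choice $\matf = \reals^{\calX}$ makes A3--A4 automatic, as noted in Remark~1 following \prettyref{th:linear}.

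\prettyref{th:linear} then supplies the two-sided sandwich
$$\frac{1}{(1+\sqrt{e})^2}\delchi(\tfrac{1}{\sqrt{n}},d)^2 \le R^*(n,d) \le \delchi(\tfrac{1}{\sqrt{n}},d)^2,$$
so everything reduces to plugging $t=1/\sqrt{n}$ into \prettyref{lmm:horo}. For $\epsilon \le 1/2$ one has $\min(1,(1-\epsilon)/\epsilon) = 1$, so~\eqref{eq:horo1} gives $\delchi(1/\sqrt{n},d) \le 1/\sqrt{n}$ with no condition on $d$, while the companion inequality $\delchi(t,d) \ge t/(2\sqrt{2})$ yields $\delchi(1/\sqrt{n},d) \ge 1/(2\sqrt{2n})$; squaring gives the first bullet of the corollary exactly. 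For $\epsilon > 1/2$, the exponent $(1-\epsilon)/\epsilon \in (0,1)$: substituting into~\eqref{eq:horo1} produces the upper bound $\delchi(1/\sqrt{n},d)^2 \le n^{-(1-\epsilon)/\epsilon}$, while substituting into~\eqref{eq:horo2} together with $\ln(1/t) = \tfrac{1}{2}\log n$ produces $\delchi(1/\sqrt{n},d)^2 \gtrsim (n\log^2 n)^{-(1-\epsilon)/\epsilon}$, matching the second bullet.

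The hypothesis on $d$ in the $\epsilon > 1/2$ regime is needed only to activate~\eqref{eq:horo2}, which requires $d \ge C\ln^2(1/t) \asymp \log^2 n$ when $t = 1/\sqrt{n}$; the stated condition $d \ge Cn\log^4 n$ is comfortably sufficient. The main observation is that there is no substantive obstacle here: the two genuinely hard pieces of work are packaged into \prettyref{th:linear} (the convex-duality reformulation of the minimax risk) and \prettyref{lmm:horo} (the modulus estimate for the erasure/Binomial kernel), and the corollary is recovered by nothing more than a short substitution and bookkeeping of constants and logarithmic factors.
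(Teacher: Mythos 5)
Your proof is correct and is essentially the same derivation the paper intends: the paper's own justification for this corollary is the single sentence ``Applying the general \prettyref{th:linear} together with \prettyref{lmm:horo}, we obtain...'', and you have simply carried out the substitution $t = 1/\sqrt{n}$, verified A1--A4 via the finiteness remark, and tracked the constants. The arithmetic checks out: $\delchi(1/\sqrt n, d)^2 \ge 1/(8n)$ from $\delchi(t,d) \ge t/(2\sqrt 2)$ combined with the $(1+\sqrt e)^{-2}$ factor from Theorem~\ref{th:linear} gives exactly $1/(8(1+\sqrt e)^2 n)$, and in the $\epsilon>1/2$ regime the identity $\ln(1/t) = \tfrac12 \log n$ converts~\eqref{eq:horo2} into the stated $(n\log^2 n)^{-(1-\epsilon)/\epsilon}$ rate. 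You are also right that the hypothesis $d \ge C n\log^4 n$ is far more than Lemma~\ref{lmm:horo} needs (it only asks for $d \gtrsim \log^2 n$); the paper carries the stronger condition over from \cite{PSW17-colt}, where the reduction from the full $\{0,1\}^d$ population-recovery problem to the number-of-ones summary is what consumes the extra factors, but for the reduced model treated here the weaker condition suffices. One small point worth flagging explicitly, which you left implicit, is that~\eqref{eq:horo2} also requires $t\le t_0(\epsilon)$, i.e.\ $n$ large enough; for $n$ below this threshold the lower bound in the second bullet is absorbed into the constant $C(\epsilon)$, so nothing is lost, but this should be said.
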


\subsection{Application: Interval censoring}
\label{sec:ic}

Consider the following setup, commonly occurring in biostatistics. Subjects are arriving according to a Poisson process
starting from time $t_0$.
Upon arrival a treatment is administered to every subject. Following the treatment after a random delay $\theta_i
\simiid
\pi$ the $i$th subject  experiences an event. Statistician terminates the experiment at time $t_1$ and counts all subjects for
which the event has occurred by time $t_1$. The goal is to estimate the CDF or the median of $\pi$. (Note that
statistician does not observe the time of event, only whether it happened or not, which makes it different from the
right-censoring model of Kaplan-Meier~\cite{kaplan1958nonparametric}. It can be seen that conditioned on the number $n$ of total subjects arrived
between $t_0$ and $t_1$ the random time $A_{(i)}$ passed between the $i$th subject's arrival and experiment termination $t_1$
is simply an $i$-th order statistic of the iid uniform sample $A_i \simiid \mathrm{Unif}[0,t_1-t_0]$. This
motivates the formal setting of ``Case 1'' below. See~\cite{huang1997interval} for a survey and more details.

Fix $s_0 \in (0,1)$ and class of distributions $\Pi$ on $\Theta = [0,1]$.
For any distribution $\pi$ we denote by $F_\pi$ its CDF. We
consider two functionals
$$ T_c(\pi) \eqdef F_{\pi}(s_0)=\pi([0,s_0]), \qquad T_m(\pi) \eqdef \int_{[0,1]} \theta \pi(d\theta) \,.$$
For a given $\pi\in \Pi$ we generate $\theta_i \simiid \pi$, $i\in [n]$ and consider two cases of observations:
\begin{enumerate}
\item ``Interval censoring, Case 1'' (also known as ``current status model''), see~\cite[Section
2.3]{groeneboom2014nonparametric}. 
The observations are $X_1,\ldots,X_n$, where $X_i=(A_i,\Delta_i)$, given by $A_i \simiid G_1$ and $\Delta_{i} = 1\{\theta_i \le A_i\}$. That is,
we only see whether $\theta_i$ has occurred before an independent $A_i$. Here $G_1$ is a fixed (known) distribution on $[0,1]$. 
We denote the corresponding Markov kernel acting from $[0,1]$ to $\calX=[0,1]\times \{0,1\}$ by $P^{(1)} = \{P^{(1)}_\theta(\cdot): \theta
\in [0,1]\}$.
\item ``Interval censoring, Case 2'', see~\cite[Section 4.7]{groeneboom2014nonparametric}. This time observations are
given by $X_i=(A_i,B_i,\Delta,\tilde \Delta)$ with $(A_i,B_i)\simiid G_2$, where $G_2$ is some fixed distribution on $[0,1]^2$,
and $\Delta_i = 1\{\theta_i \le A_i\}$, $\tilde \Delta_i = 1\{\theta_i \le B_i\}$. The Markov kernel for this case is denoted by $P^{(2)}$.
\end{enumerate}

We denote the minimax risks for estimating $T_j, j\in \{c,m\}$ in case $i$, $i\in \{1,2\}$ by
$R^{(i,j)}_n(\Pi)$. Under appropriate conditions on $G_1$, $G_2$ and $\Pi$, the following was shown in a series of works, cf.~\cite[Section 5]{GL95} and~\cite[Example 3.1]{groeneboom1992information}:
\begin{align} 
   R^{(1,m)}_n(\Pi) &\asymp R^{(2,m)}_n \asymp {1\over n}\,,\label{eq:ic_rm}\\
   R^{(1,c)}_n(\Pi) &\asymp {1\over n^{2/3}}\,,\label{eq:ic_rc1}\\
   R^{(2,c)}_n(\Pi) &\asymp {1\over (n\log n)^{2/3}} \label{eq:ic_rc2}\,.
\end{align}

We denote the $\chi^2$-moduli of continuity for functionals $T_c$ and $T_m$ under the two models as follows:
$$ \delta^{(i,j)}_{\chi^2}(t; \Pi) \eqdef \sup\{T_j(\pi) - T_j(\pi'): \pi,\pi' \in \Pi, \chi^2(\pi P^{(i)} \| \pi'
P^{(i)}) \le t^2\}\,, \quad i=1,2,\quad j=c,m.$$
The next result shows that the minimax rates are determined by these moduli of continuity. 
The proof is given in \prettyref{app:lmm}, which amounts to verifying the assumptions of \prettyref{th:linear}.

\begin{proposition}
\label{prop:ic}
 Suppose that the set $\Pi$ is convex and weakly closed. For the case of estimating $T_c$, in
addition, we assume that $s_0$ is a point of continuity of $F_\pi(\cdot)$ for every $\pi \in \Pi$. Suppose that distributions $G_1$ and $G_2$
both have densities $g_1$ and $g_2$.\footnote{For estimating $T_c$ we could also demand only the existence of density in small interval around
$s_0$ or $(s_0,s_0)$.} Then we have for all $i\in\{1,2\}, j\in\{c,m\}$
		$$ {R^{(i,j)}_n} \asymp \delta^{(i,j)}_{\chi^2}\pth{{1\over \sqrt{n}}; \Pi}^2 $$
		Furthermore, in each case the upper bound is attained by an estimator of the form $\sum_{k=1}^n \phi(X_k)$ 
		for some continuous function $\phi$ on $\calX$.
\end{proposition}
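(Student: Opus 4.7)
The plan is to reduce the claim to an application of Theorem~\ref{th:linear} by verifying assumptions A1--A4 for each of the four pairs $(i,j)\in\{1,2\}\times\{c,m\}$. We endow $\Pi$ with the weak (i.e.\ weak-$*$) topology of probability measures on $[0,1]$, and take $\matf=C_b(\matx)$ to be the space of bounded continuous functions on $\matx$ (which is $[0,1]\times\{0,1\}$ in Case~1 and $[0,1]^2\times\{0,1\}^2$ in Case~2, with $\{0,1\}$ carrying the discrete topology). Affinity (A1) of $T_c(\pi)=\pi([0,s_0])$ and $T_m(\pi)=\int\theta\,\pi(d\theta)$ is immediate, (A2) is given, and $\matf$ trivially contains the constants and is dense in $L^2(\matx,\nu)$ for any Borel probability measure $\nu$ by Lusin's theorem, giving (A3). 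Moreover, since $[0,1]$ is compact, $\matp([0,1])$ is weakly compact by Prokhorov's theorem; hence $\Pi$, being assumed weakly closed, is weakly compact, establishing (A4a).

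The main work is (A4b), i.e.\ weak continuity of $T$, $\pi Pf$, and $\pi P(f^2)$ along $\pi$. Continuity of $T_m$ follows since $\theta\mapsto\theta$ is bounded and continuous. For $T_c$, the weak continuity of $\pi\mapsto F_\pi(s_0)$ at a given $\pi\in\Pi$ is exactly the Portmanteau theorem, which applies because $s_0$ is assumed to be a continuity point of $F_\pi$.

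The more delicate step is continuity of $\pi\mapsto \pi P^{(i)}f$ and $\pi\mapsto\pi P^{(i)}(f^2)$, which we establish by proving that the kernels $P^{(i)}$ are strong Feller, i.e.\ that $P^{(i)}f\in C([0,1])$ whenever $f\in C_b(\matx)$; then $\pi\mapsto\int P^{(i)}f\,d\pi$ is weakly continuous. In Case~1, writing
\[
P^{(1)}f(\theta)\;=\;\int_0^\theta f(a,0)\,g_1(a)\,da \;+\; \int_\theta^1 f(a,1)\,g_1(a)\,da,
\]
the integrand is bounded and $g_1\in L^1$, so each integral is absolutely continuous in $\theta$ (dominated convergence applied to the indicators $\indc{a<\theta}$). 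In Case~2, after partitioning $[0,1]^2$ into the four regions determined by the signs of $a-\theta$ and $b-\theta$, the same argument applies coordinatewise: the boundaries of the integration domains are line segments, which have $g_2$-measure zero since $g_2$ is a density, so $P^{(2)}f$ is continuous in $\theta$. Applying the same reasoning to $f^2\in C_b(\matx)$ yields continuity of $\pi P^{(i)}(f^2)$ in $\pi$. This completes the verification of (A4b).

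With all hypotheses of Theorem~\ref{th:linear} in force, the conclusion gives $R^{(i,j)}_n\asymp \delta^{(i,j)}_{\chi^2}(1/\sqrt{n};\Pi)^2$ together with the existence of an optimal estimator of the empirical-average form $\frac{1}{n}\sum_{k=1}^n\phi(X_k)$ with $\phi\in\matf=C_b(\matx)$, as claimed. The only non-routine obstacle is the strong Feller verification, which is precisely where the assumption that $G_1,G_2$ possess densities is used: without it, the indicator functions defining $\Delta_i$, $\tilde\Delta_i$ would contribute jumps in $\theta$ located at atoms of $G_i$, destroying continuity and breaking the weak-continuity requirement in (A4b).
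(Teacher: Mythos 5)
Your proposal is correct and follows essentially the same route as the paper: reduce to Theorem~\ref{th:linear} by endowing $\Pi$ with the weak topology, take $\matf$ to be continuous functions on $\calX$, invoke Prokhorov's theorem for A4a, use the continuity-point assumption on $s_0$ for the continuity of $T_c$, and verify the strong Feller property of the kernels via dominated convergence (the density assumption on $G_1,G_2$ ensuring the boundary sets $\{a=\theta\}$, $\{b=\theta\}$ are null). The only minor difference is that you spell out Case~2 explicitly, whereas the paper does Case~1 and notes the other is analogous.
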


The above general characterization of the minimax risk by \prettyref{prop:ic} can be converted into explicit rate of convergence. We give two examples:
\begin{enumerate}
\item Suppose that the distribution $G_1$ has density $g_1$ such that $g_1(a) \ge \epsilon_1>0$ for all $a\in(0,1)$. Then we
have for any weakly closed and convex $\Pi$:
	$$ R_n^{(1,m)} \asymp \delta^{(1,m)}_{\chi^2}\pth{{1\over \sqrt{n}}; \Pi}^2
	\asymp {1\over n}\,.$$

	Indeed, consider any two distribution $\pi_1$ and $\pi_2$ with corresponding CDFs given by $F_1$ and $F_2$. A
	simple calculation shows
		\begin{equation}\label{eq:ic_j2}
			\chi^2(\pi P^{(1)} \| \pi'P^{(1)}) = \int_0^1 g_1(a) {(F_1(a) - F_2(a))^2\over F_2(a)(1-F_2(a))}\,.
	\end{equation}		
	Upper-bounding the denominator by ${1\over 4}$ and lower-bounding $g_1$ by $\epsilon_0$ we obtain
		\begin{align} \chi^2(\pi P^{(i)} \| \pi'P^{(i)}) &\ge 4\epsilon_0 \int_0^1 da (F_1(a) - F_2(a))^2\\
				&\ge 4\epsilon_0 \left(\int_0^1 da (F_1(a) - F_2(a))\right)^2\,, \label{eq:ic_j5}
		\end{align}				
	where the last step is via Jensen's inequality. Since the integral equals $T_m(\pi_2) - T_m(\pi_1)$ we get that
		$$ \delta^{(1,m)}_{\chi^2}(t) \le {t\over 2\sqrt{\epsilon_0}}\,.$$
	Note also that from~\eqref{eq:delta_all} and~\eqref{eq:delta_superl} we conclude that 
	$\delta^{(1,m)}_{\chi^2}(t) \asymp t$. This recovers~\eqref{eq:ic_rm} under most general conditions. (Note that
	if density $g_1$ is zero on some interval $[a,b]$ then the model becomes unidentifiable.)

\item Now suppose that $G_1$ has density $g_1$ that is continuous and positive at $s_0$. Consider the class $\Pi(\gamma,\epsilon) \eqdef  \{\pi:
F_\pi(s) \mbox{ is $\gamma$-Lipschitz for~} s\in(s_1,s_2)\}$, where $s_1 = s_0-\epsilon, s_2 = s_0+\epsilon$. Then we claim
	\begin{equation}\label{eq:ic_j3}
		\delta^{(1,c)}_{\chi^2}(t) \asymp t^{1/3}\,.
\end{equation}	
Without loss of generality, we will assume that $\epsilon_0 \le g_1(s) \le {1\over \epsilon_0}$ for all $s\in (s_1,s_2)$. (Otherwise, we simply reduce $\epsilon$.) 
Notice that $\Pi$ is indeed convex and weakly closed. For the lower bound consider any pair of CDFs $F_1,F_2$ such that
(a) they both belong to $\Pi$, (b) $1/4 < F_2(s_1) < F_2(s_2) < 3/4$, and (c)
$$ F_1(s) - F_2(s) = \begin{cases} 0, & s<s_3 \mbox{~or~} s>s_4\\
				   s_5 + {\gamma\over 2} |s-s_0|, & s_3 < s < s_4
			\end{cases}\,,$$
where $s_3 = s_0-\tau_1, s_4=s_0+\tau_1$ and $s_5 = -{\gamma\tau_1/2}$. From~\eqref{eq:ic_j2} we obtain:
	$$ \chi^2(\pi_1 P^{(1)} \| \pi_2P^{(1)}) = \int_{s_3}^{s_4} ds g_1(s) {(F_{1}(s) -
	F_2(s))^2 \over F_2(s)(1-F_2(s))}  \lesssim \tau_1^3\,.$$
	Thus, this demonstrates $\delta_{\chi^2}(\tau_1^3) \ge \tau_1$, as claimed by~\eqref{eq:ic_j3}. 

	For the upper bound, arguing as in~\eqref{eq:ic_j5} we get for any $\tau_1<\epsilon$ that
	$$ t^2 \ge \chi^2(\pi_1 P^{(1)} \| \pi_2P^{(1)}) \gtrsim \int_{s_0-\tau_1}^{s_0+\tau_1} |F_1(s_0+x) -
	F_2(s_0+x)|^2 ds\,.$$
	Now, if we set $\delta = F_1(s_0)-F_2(s_0)>0$ then from Lipschitzness we get that for $\tau_1 = {\delta \over
	2\gamma}$ we have that $|F_1(s_0+x)-F_2(s_0+x)|\ge {\delta\over 2}$ and thus 
	$$ t^2 \gtrsim \delta^3\,,$$
	implying that $\delta_{\chi^2}(t) \lesssim t^{1/3}$, finishing the proof of~\eqref{eq:ic_j3}.
\end{enumerate}

These applications hopefully demonstrate the utility of Theorem~\ref{th:linear}. Indeed, the minimax rates~\eqref{eq:ic_rm}
and~\eqref{eq:ic_rc1} were obtained with a lot less effort compared to the existing literature \cite{groeneboom2014nonparametric}\footnote{Results in \cite{groeneboom2014nonparametric}  are stated with the extra assumption on the lower bound on the density of $\pi$ at $s_0$, but this constraint seems not necessary for establishing rates.} In
particular, the previous upper bounds were derived by a lengthy analysis of the nonparametric maximum
likelihood~\cite{groeneboom1992information}, or certain ad hoc histogram estimator~\cite{GL95}.
On the other hand, establishing~\eqref{eq:ic_rc2} by computing $\delta_{\chi^2}$ is more involved and will be presented
elsewhere.

\section{Deterministic setting}
\label{sec:hd}

In this section we consider the deterministic setting as described in \prettyref{sec:intro}. 
Namely, the observations are $\bX=(X_1,\ldots,X_n)$, where $X_i\inddistr P_{\theta_i}$.
Here the unknown parameter $\vect{\theta}=(\theta_1,\ldots,\theta_n)$ is deterministic and belongs to the following constraint set
	$$ \bTheta_c = \left\{ \vect{\theta} \in \Theta^{\otimes n}: {1\over n} \sum_{i=1}^n c(\theta_i) \le 1 \right\}\,,$$
	for some cost function $c:\Theta \to \mreals$.
Equivalently, $\bTheta_c$ consists of those $\btheta$ whose empirical distribution $\pi_{\btheta} = \frac{1}{n} \sum_{i=1}^n \delta_{\theta_i}$ 
 belongs to the convex set
\begin{equation}
\Pi = \sth{\pi \in \calP(\Theta): \int c(\theta) \pi(d\theta) \leq 1}.
\label{eq:Pic}
\end{equation}
Let $h:\Theta\to\reals$ and define the following affine functional 
\begin{equation}
T(\pi)=\int h(\theta) \pi(d\theta). 
\label{eq:Th}
\end{equation}
		Given $\bX=(X_1,\ldots,X_n)$, the goal is to estimate  $T(\pi_{\btheta})= \frac{1}{n}\sum_{i=1} h(\theta_i)$, which is a symmetric separable function of the parameter $\btheta$.
	The minimax quadratic risk $\RComp(n)$ is defined in \prettyref{eq:RComp}, namely,
	\begin{equation}\label{eq:rd_def}
		\RComp(n) = \inf_{\hat T} \sup_{\vect{\theta} \in \bTheta_c} \EE_{\btheta}[|\hat T(\vect{X}) -
	T(\pi_{\btheta})|^2] 
\end{equation}	

%

Many problems studied in the high-dimensional functional estimation literature are of or can be reduced to questions of the above type. For example, in the Gaussian model where $X_i\sim N(\theta_i,1)$, estimation of linear ($h(\theta)=\theta$) and quadratic functional ($h(\theta)=\theta^2$) has been well-studied and more recently under sparsity assumptions which correspond to adding further constraints with $c(\theta)=\indc{|\theta|>0}$ or $c(\theta)=|\theta|^q$ \cite{CCTV16,CCT17}. Estimation of non-smooth functional such as the $\ell_1$-norm ($h(\theta)=|\theta|$) has been studied in \cite{LNS99,CL11}.



The main idea of this section is that the minimax problem in the deterministic setting is similar to the iid setting studied in \prettyref{sec:linear}, where, instead of
adversarially selecting a vector $\vect{\theta}$ from $\bTheta_c$, one generates each coordinate $\theta_i$ independently from some prior $\pi\in \Pi$ such that $\Expect_{\theta \sim \pi}[c(\theta)] \leq 1$. By concentration, we expect the constraint $\frac{1}{n}\sum_{i=1}^n c(\theta_i) \leq 1$ to be fulfilled approximately and indeed this product prior can be made valid with appropriate truncation. Furthermore, we expect $T(\pi_{\btheta}) = \frac{1}{n} \sum_{i=1}^n h(\theta_i)$ to be concentrated near its mean $T(\pi) = \int h(\theta)\pi(d\theta)$, which is a linear functional of $\pi$. Estimating the latter falls under the purview of Section~\ref{sec:linear} and hence its minimax rate is given by $\delta_{\chi^2}(\tfrac{1}{\sqrt{n}})$. Thus, it seems natural
to expect that
\begin{equation}\label{eq:hd_asymp_wrong}
		\RComp(n) \asymp \delta_{\chi^2}\pth{{1\over \sqrt{n}}}^2
\end{equation}	
up to universal constants. Alas, such statement cannot hold without conditions, as the next example demonstrates. However, the good news is that such counterexamples only occur in the
``uninteresting'' case of $\RComp(n)=0$ or $\RComp(n) \asymp {1\over n}$ (parametric rate).

\begin{example} Let $\Theta = \matx = \{0,1\}$, $c(\theta)=0$, so that $\Pi=\{\Bern(p): 0 \leq p \leq 1\}$. Let 
$h(\theta)=\theta$ and consider the observation model $\PP[X=\theta] = 1-\PP[X=1-\theta] =
\tau$ (the binary symmetric channel). From~\eqref{eq:delta_all} and~\eqref{eq:delta_superl} we obtain that for any
$\tau\ge 0$ (including $\tau=0$!): $ \delta_{\chi^2} (t) \ge {t\over 2\sqrt{2}}$.
At the same time, a simple unbiased estimator $\hat T(X_1,\ldots, X_n) = {1\over n(1-2\tau)} \sum_{i=1}^n (1\{X_i=1\} -
\tau)$ achieves
	$$ \RComp(n) \le {\tau (1-\tau) \over (1-2\tau)^2} {1\over n}\,.$$
One immediate conclusion is that at $\tau=0$ we have $\RComp(n)=0$ while $\delchi(t)>0$ for all $t>0$. Furthermore, even
when $\tau>0$ and $\RComp(n) \asymp \delchi(1/\sqrt{n})^2 \asymp \tfrac{1}{n}$, the proportionality constant in the
first relation is not uniform in $\tau$, as $\lim_{\tau \to 0} {\RComp(n)\over \delta_{\chi^2}(1/\sqrt{n})^2} = 0 $. Therefore, we cannot expect
the relation~\eqref{eq:hd_asymp_wrong} to hold universally.
\end{example}

\begin{remark}[Parametric lower bound] 
\label{rmk:lb-parametric}
Consider the setting where the constraint function $c$ and the function $h$ are both fixed and the sample size $n$ grows.
There is a general \emph{dichotomy}: either risk $\RComp(n)=0$ or $\RComp(n) = \Omega(\frac{1}{\sqrt{n}})$. Indeed, either there exists a pair $\theta_a,\theta_b \in \Theta$ s.t. $h(\theta_a)\neq h(\theta_b)$ and
$\TV(P_{\theta_a},P_{\theta_b)})<1$, or there is no such pair. In the latter case, we have $h(\theta)=g(X_1)$ (i.e.~$h(\theta)$ is a deterministic function of a single sample), and thus $\RComp(n)=0$ for any $n\geq 1$. In the former case, we can lower bound $\RComp(n)$ by
the Bayes risk when $\vect{\theta}$ has iid components with $\PP[\theta_i = \theta_a] = \PP[\theta_i = \theta_b] =
{1\over 2}$.\footnote{This prior needs to be modified if $c(\theta_a)>1$ or $c(\theta_b)>1$. 
Specifically, choose an arbitrary $\theta_0$ such that $c(\theta_0)<1$. Then we can choose $\vect{\theta}$ iid from $\pi = (1-\epsilon) \delta_{\theta_0} + \frac{\epsilon}{2} (\delta_{\theta_a} + \delta_{\theta_b})$ for sufficiently small constant $\epsilon$.
}
 Clearly, the corresponding Bayesian risk is $\Omega(1/\sqrt{n})$.
\end{remark}

The main result of this section is:
\begin{theorem}\label{th:highdim}
Suppose that $(\Theta,\matx,P,T, \Pi)$, with 
 $\Pi$ and $T$ given in \prettyref{eq:Pic} and \prettyref{eq:Th} respectively, satisfy conditions A1-A4 of Theorem~\ref{th:linear}.
Then
\begin{equation}\label{eq:hd_ach}
		\RComp(n) \le \delchi\left({1\over \sqrt{n}}\right)^2\,,
\end{equation}	
achieved by an estimator of the form $\hat T = \frac{1}{n} \sum_{i=1}^n g(X_i)$ for some $g\in\calF$.
Furthermore, suppose the following extra conditions are satisfied
\begin{enumerate}
\item[A5] $K_V = \sup_{\pi \in \Pi} \Var_{\theta \sim \pi}[T(\theta)] < \infty$;
\item[A6] Cost function $c \ge 0$ and there exists $\theta_0 \in \Theta$ with $c(\theta_0)=0$.
\end{enumerate}
Then
\begin{equation}\label{eq:hd_conv}
		\RComp(n) \ge {1\over 2400} \delchi\left({1\over \sqrt{n}}\right)^2 - {K_V\over 2n}\,.
\end{equation}	
\end{theorem}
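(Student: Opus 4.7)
For the upper bound \eqref{eq:hd_ach} the plan is to reuse the estimator built in the proof of Theorem \ref{th:linear}. Fix any $g\in\calF$ and consider $\hat T_g = \tfrac{1}{n}\sum_i g(X_i)$. For any $\btheta$ with $\pi_\btheta\in\Pi$, independence of $X_i\sim P_{\theta_i}$ gives the exact bias--variance decomposition
\[
\EE_{\btheta}\bigl[|\hat T_g - T(\pi_\btheta)|^2\bigr] \;=\; \tfrac{1}{n}\,\pi_\btheta(V_g) \;+\; \bigl(\pi_\btheta Pg - \pi_\btheta h\bigr)^2, \qquad V_g(\theta)\eqdef \Var_{P_\theta}(g).
\]
The key observation is that $\pi_\btheta(V_g) = \pi_\btheta P(g^2) - \pi_\btheta((Pg)^2) \le \Var_{\pi_\btheta P}(g)$, so the right-hand side is dominated by the same iid-setting expression appearing in \eqref{eq:emp_bv}, now with $\pi_\btheta$ in place of $\pi$. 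Taking the worst case over $\pi_\btheta\in\Pi$ and infimizing over $g\in\calF$ gives $\RComp(n)\le \deltaa(1/\sqrt n)^2$; invoking Proposition \ref{prop:delminmax} then yields $\delchi(1/\sqrt n)^2$.

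For the lower bound \eqref{eq:hd_conv} the plan is a Bayes-risk argument via fuzzy hypotheses. Let $\pi_0^\star,\pi_1^\star\in\Pi$ be near-maximizers of $\delchi(1/\sqrt n)$ (their existence is the second statement of Proposition \ref{prop:delminmax}), so that $\Delta\eqdef T(\pi_1^\star)-T(\pi_0^\star)\approx \delchi(1/\sqrt n)$ and $\chi^2(\pi_1^\star P\|\pi_0^\star P)\le 1/n$. The strategy is to place product priors $(\pi_j^\star)^{\otimes n}$ on $\btheta$ and verify three things in parallel: (a) the resulting $\btheta$ lands in $\bTheta_c$ with constant probability, (b) the tensorized $\chi^2$-divergence between the marginal laws of $\bX$ remains $O(1)$, as in \eqref{eq:tv_chi2_tenso}, and (c) $T(\pi_\btheta)=\tfrac1n\sum h(\theta_i)$ concentrates around $T(\pi_j^\star)$.

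For (a) I would invoke A6 and shrink each prior toward the zero-cost atom, $\tilde\pi_j=(1-\lambda)\pi_j^\star+\lambda\delta_{\theta_0}$ with $\lambda\in(0,1)$ absolute, so that $\int c\, d\tilde\pi_j \le 1-\lambda$; conditioning on the event $\{\tfrac1n\sum c(\theta_i)\le 1\}$ (whose probability under $\tilde\pi_j^{\otimes n}$ is at least a constant by Markov applied, if necessary, to a truncated version of $c$) yields a valid prior on $\bTheta_c$. Since such shrinkage and rejection each change the relevant distributions by absolute factors, convexity of $\chi^2$ plus the subadditivity bound \eqref{eq:delchi_sub} keep both $\Delta$ and the one-sample $\chi^2$-divergence within constant factors of their original values. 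For (c) assumption A5 gives $\Var[T(\pi_\btheta)]\le K_V/n$ under each product prior, so by Chebyshev each composite event $\{|T(\pi_\btheta)-T(\tilde\pi_j^\star)|\le \epsilon\}$ with $\epsilon=O(\sqrt{K_V/n})$ has probability at least $3/4$. Combining (a)--(c), the two composite priors induce marginals on $\bX$ of constant total-variation distance; the fuzzy two-point inequality (cf.\ Tsybakov Sec.\ 2.7.4) then returns a Bayes lower bound of order $(\Delta-2\epsilon)^2$, whence $\RComp(n)\gtrsim \delchi(1/\sqrt n)^2 - K_V/n$ after collecting the accumulated absolute constants into the prefactor $1/2400$ and the additive correction $K_V/(2n)$.

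The main obstacle is the simultaneous bookkeeping of the three perturbations: shrinkage by $\lambda$, conditioning on $\{\btheta\in\bTheta_c\}$, and conditioning on the concentration event for $T(\pi_\btheta)$. Each introduces a constant multiplicative loss in the $\chi^2$-divergence (via Bayes' rule for the conditioning steps) and in the gap $\Delta$ (via the rescaling), and these must be chosen \emph{independently of $n$} so that the final bound still exhibits the clean form $c_1\delchi(1/\sqrt n)^2 - c_2 K_V/n$. A secondary subtle point is that if $c$ is unbounded the concentration of $\tfrac1n\sum c(\theta_i)$ is not automatic; this is handled by truncating $c$ at a large constant (feasibility only depends on the mean $\int c\, d\pi$) or by replacing Chebyshev with a Markov-type tail bound, which is harmless for the purposes of obtaining an absolute constant.
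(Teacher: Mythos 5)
Your proposal takes essentially the same route as the paper: the upper bound is the identical bias–variance argument (via concavity/Jensen reducing $\pi_\btheta(V_g)$ to $\Var_{\pi_\btheta P}(g)$, then Proposition~\ref{prop:delminmax}), and the lower bound uses the same mixture-with-$\delta_{\theta_0}$ shrinkage (A6), product priors, Markov for the cost constraint and Chebyshev/A5 for the concentration of $T(\pi_\btheta)$, followed by conditioning on the good event and a fuzzy two-point TV argument — exactly as the paper does. Two small notational points worth tightening when you write it out: the conditioning step perturbs the total variation \emph{additively} (via $\TV(\nu_{|A}P^{\otimes n},\nu'_{|A'}P^{\otimes n})\le\nu(A^c)+\nu'(A'^c)+\TV(\nu P^{\otimes n},\nu'P^{\otimes n})$), not by an ``absolute factor''; and no truncation of $c$ is needed, since Markov's inequality uses only the mean $\int c\,d\tilde\pi\le 1-\lambda$ to give good-event probability $\ge\lambda$. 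The subadditivity bound~\eqref{eq:delchi_sub} is not actually used in this argument.
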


\begin{proof} 
Recall that $T(\pi)=\int h(\theta)\pi(d\theta)$ and  $T(\pi_{\btheta}) = \frac{1}{n}\sum_{i=1}^n h(\theta_i)$.
To prove~\eqref{eq:hd_ach}, consider an estimator $\hat T_g$ of the form~\eqref{eq:emp_est} and, similarly
to~\eqref{eq:emp_bv}, let us analyze its risk by decomposing into bias and variance parts:
\begin{equation}\label{eq:hd_1}
		\sqrt{\EE_{\vect{\theta}} [|\hat T_g(\vect{X}) - T(\pi_{\btheta})|^2]} \le {1\over n} \sqrt{\sum_{i=1}^n
	\Var_{P_{\theta_i}}[g]} + \left|{1\over n} \sum_{i=1}^n (P_{\theta_i} g - h(\theta_i)) \right| 
\end{equation}	
Recall that the empirical distribution of $\vect{\theta}=(\theta_1,\ldots,\theta_n)$ is denoted by $  \pi_{\btheta} = {1\over n} \sum_{i=1}^n \delta_{\theta_i}$, so that 
$\pi_{\btheta} P = \frac{1}{n} \sum_{i=1}^n P_{\theta_i}$.
By the concavity of $\mu \mapsto \Var_\mu[g]$, upper-bounding 
	$$ \sum_{i=1}^n	\Var_{P_{\theta_i}}[g] \le n \cdot \Var_{ \pi_{\btheta} P}[g], $$
we continue~\eqref{eq:hd_1} to get
\begin{equation}
		\sqrt{R^*(n)} \le \inf_g \sup_{\btheta} {1\over \sqrt{n}} \sqrt{\Var_{\pi_{\btheta} P}[g]} + |T(\pi_{\btheta}) -
	\pi_{\btheta} P g|\,, 
\end{equation}	
where the supremum is taken over all $\btheta$ whose empirical measure $\pi_{\btheta}$ belongs to $\Pi$.
Thus we can extend the inner supremum to $\hat \pi$ ranging over all of $\Pi$,
concluding 
	$$ \sqrt{R^*(n)} \le \deltaa(\tfrac{1}{\sqrt{n}})$$
with $\deltaa$ defined in~\eqref{eq:deltach_def}. Applying \prettyref{prop:delminmax} we get~\eqref{eq:hd_ach}.

To prove~\eqref{eq:hd_conv}, fix $\gamma  \in (0,1)$ (to be specified later) and 
consider $\pi_0,\pi'_0 \in \Pi$ such that $\chi^2(\pi_0'P\|\pi_0 P) \le {1\over n}$ and
$T(\pi_0') - T(\pi_0) = \delta$. Next define distributions
	$$ \pi_1 = \gamma \pi_0 + (1-\gamma)  \delta_{\theta_0}, \quad \pi'_1 = \gamma \pi'_0 + (1-\gamma)
	\delta_{\theta_0}\,,$$
	where $\theta_0$ is from Assumption A6 such that $c(\theta_0)=0$.
From the convexity of $\chi^2(\cdot\|\cdot)$, we get 
	$$ \chi^2(\pi_1' P\| \pi_1 P) \le {\gamma\over d}, \qquad T(\pi_1') - T(\pi_1) = \gamma \delta\,.$$
Denote $\mu' = T(\pi_1'), \mu = T(\pi_1)$. Define distributions $\nu = \pi_1^{\otimes n}, \nu' = \pi_1'^{\otimes n}$ and 
note that $(\pi_1 P)^{\otimes n} = \nu P^{\otimes n}$. 
Then 
\begin{align*}
\TV(\nu P^{\otimes n}, \nu' P^{\otimes n})
= \TV((\pi_1 P)^{\otimes n}, (\pi_1' P)^{\otimes n}))
\stepa{\leq} & ~ \frac{1}{2}\sqrt{\chi^2((\pi_1 P)^{\otimes n} \| (\pi_1' P)^{\otimes n}) } \\
\stepb{=} & ~ \frac{1}{2}\sqrt{(1+\chi^2(\pi_1 P\| \pi_1' P))^n-1 } \\
\stepc{\leq} & ~ \frac{1}{2}\sqrt{(1+\gamma/n)^n-1 } \le \frac{1}{2}\sqrt{e^{\gamma} - 1},
\end{align*}
where (a) follows from the fact that $\TV \leq \frac{1}{2} \sqrt{\chi^2}$ \cite[Section 3]{gibbs2002choosing}; 
(b) is from the tensorization identity in \eqref{eq:tv_chi2_tenso};
(c) is from the convexity $\chi^2(\pi_1 P\| \pi_1' P) \leq \gamma \chi^2(\pi_0 P\| \pi_0' P)$.

Next define sets $A,A' \subset \bTheta_c$:
	\begin{align} A &= \sth{\vect{\theta} \in \Theta^{\otimes n}: \frac{1}{n} \sum_{i=1} c(\theta_i) \le 1, T(\pi_{\btheta}) \le \mu + {\gamma
	\delta \over 3}}\\
	A' &= \sth{\vect{\theta} \in \Theta^{\otimes n}: \frac{1}{n} \sum_{i=1} c(\theta_i) \le 1, T(\pi_{\btheta}) \ge \mu' - {\gamma
		\delta \over 3}}\,.
\end{align}
	From the Chebyshev and Markov inequalities we have
		$$ \nu(A^c), \nu'(A'^c) \le \gamma + {9 K_V\over n \gamma^2 \delta^2} $$
Next, decompose distributions $\nu, \nu'$ as convex combinations:
	$$ \nu = \nu(A) \nu_{|A} + \nu(A^c) \nu_{|A^c}\,, \nu' = \nu'(A') \nu'_{|A'} + \nu'(A'^c) \nu'_{|A'^c}\,,$$
where $\nu_{|B}(\cdot) \eqdef \nu(\cdot\cap B)/\nu(B)$ is the conditional version of the distribution $\nu$.

By the triangle inequality and the data processing inequality of total variation, we get
	$$ \TV(\nu_{|A} P^{\otimes n}, \nu'_{|A'} P^{\otimes n}) \le \nu(A^c) + \nu'(A^c) + \TV(\nu P^{\otimes n}, \nu'
	P^{\otimes n})\,. $$
Altogether, we have a pair of distributions $\nu_1 \triangleq \nu_{|A}$ and $\nu_1' \triangleq \nu'_{|A'}$ both supported on $\bTheta_c$ such that $T(\pi_{\btheta}) \le \mu - {\gamma \delta\over 3}$ for $\nu_1$-a.e.~$\btheta$ and $T(\pi_{\btheta}) \ge \mu + {\gamma \delta\over 3}$ for $\nu_1'$-a.e.~$\btheta$. Applying the $\TV$ version of Le Cam's method for quadratic risk (see \cite[Lemma 1]{Yu97}) yields the following minimax lower bound:
	$$ R^*(n) \ge {1\over 4} \left(\gamma \delta\over 3\right)^2 (1-t)\,,$$
	where $t \triangleq 2\gamma + {18 K_V \over n \gamma^2 \delta^2} + \sqrt{e^{\gamma} -1}/2$. Choosing $\gamma \in (0,1)$ to maximize 
	the function $\gamma^2-2\gamma^3-\sqrt{e^\gamma-1}/2$, we obtain
	$$ R^*(n) \ge {1\over 2400} \delta^2 - {K_V\over 2n}\,.$$
	Optimizing over the choice of $\pi_0,\pi'_0$ thus yields \eqref{eq:hd_conv}.
\end{proof}

\begin{remark}

	Before presenting new results obtained from \prettyref{th:highdim}, as a quick application, consider the problem of estimating the $\ell_1$-norm of a vector in the Gaussian location model \cite{LNS99,CL11}, where $X_i\sim N(\theta_i,1)$, $h(\theta)=|\theta|$ and $T(\pi_{\btheta})=\frac{1}{n}\|\vect{\theta}\|_1$, and $\Theta=[-1,1]$. 
	Using the method of polynomial approximation and moment matching, it was shown in \cite{CL11} that $\RComp(n) = \Theta((\frac{\log\log n}{\log n})^2)$. (In fact, the sharp constant as $n\diverge$ was also found). 
To see how this result follows from \prettyref{th:highdim}, note that $K_V=1$, we have $
c\delchi^2(\frac{1}{\sqrt{n}}) - \frac{1}{4n} \leq \RComp(n) \leq \delchi^2(\frac{1}{\sqrt{n}})$ for constant $c$, where 
\begin{align}
\delchi(t) =  \sup\sth{\int |\theta| \pi(d\theta) -\int |\theta| \pi'(d\theta) : \chi^2(\pi'* N(0,1)\|\pi * N(0,1))\le t^2}.
\label{eq:delchi-CL-def}
\end{align}
Here $*$ denotes convolution, and the supremum is taken over $\pi,\pi' \in\calP([-1,1])$. 
The speed of convergence of $\delchi(t)$ when $t\to0$ is extremely slow and thus its behavior governs the minimax rate. 
Indeed, 
one can show that (see \prettyref{app:lmm})
\begin{equation}
\delchi(t) = \Theta\pth{\frac{\log\log\frac{1}{t}}{\log\frac{1}{t}}},
\label{eq:delchi-CL}
\end{equation}
 recovering the result of \cite{CL11}.

However, if the parameter space is unbounded with $\Theta=\reals$ we have $K_V=\infty$ and lower bound in
\prettyref{th:highdim} is not applicable. (In fact it is easy to see that $\delta_{\chi^2}(t)=\infty$ for any $t$.)
Nevertheless, applying a truncation argument, it was shown in \cite{CL11} that $\RComp(n) \asymp \frac{1}{\log n}$.	
\end{remark}

\subsection{Application: Distinct Elements problem}
\label{sec:de}

\apxonly{\textbf{Juditsky asks:} what is the risk of estimating from just the \# of distinct elements among the observed balls?}

In the \emph{distinct elements} problem, given a sample randomly drawn from an urn containing multiple colored balls, the goal is to estimate the total number of distinct colors in the urn. 
This problem has been thoroughly investigated in both statistics and computer science under various formulations and sampling models. 
We refer the readers to the comprehensive survey \cite{BF93}, \cite{CCMN00,RRSS09,Valiant11,GV-thesis,WY2016sample} for more recent work, and  \cite[Table 1]{WY2016sample} for a summary of the state of the art. 
In this section, we consider the following version of the distinct elements problem, where the number of balls in the urn is at most $n$ and unknown a priori. We shall work with the so-called \emph{Bernoulli sampling model} with sampling ratio $p$, a specific version of sampling without replacement, where the color of each ball is observed independently with probability $p$; see \cite[Appendix A]{WY2016sample} for connections and near equivalence to other sampling models.

Most of the recent theoretical results aim at the sublinear regime of $p=o(1)$. In particular, it is known that the optimal sample complexity for consistency (in normalized error) is $p = \Theta(\frac{1 }{\log n})$.  
For the linear regime, say, 1\% of the balls are observed, existing results do not yield tight characterization of the optimal estimation accuracy.
Next, we will apply the general \prettyref{th:highdim} to determine the minimax risk up to logarithmic factors in the linear regime, and reveal an elbow effect in the optimal rate of convergence that precisely occurs at sampling ratio $\frac{1}{2}$. 

Without loss of generality, assume that the number of colors in the universe (not necessarily in the urn) is $n$, and indexed by $[n]=\{1,\ldots,n\}$.
Let $\theta_i \in \mathbb{Z}_+$ be the number of balls of the $i$th color, $i=1,\ldots,n$. Thus, the parameter 
$\btheta=(\theta_1,\ldots,\theta_n)$ is constrained  to belong to the set
	$$ \bTheta_c = \sth{\btheta \in \integers_+^n: {1\over n}\sum_{i=1}^n \theta_i \le 1 }.$$
	We shall work with the Bernoulli sampling model with sampling ratio $p$, where the color of each ball is observed independently with probability $p$. 
	Denote by $N_i$ the number of observed balls of the $i$th color. Then we have $X_i \inddistr \Binom(\theta_i)$.
	Given $(N_1,\ldots,N_n)$, the goal is to estimate the (normalized) number of distinct colors:
	$$ T(\pi_{\btheta}) \eqdef {1\over n} \sum_{i=1}^n \indc{\theta_i \ge 1}. $$
	This problem is exactly in the deterministic setting of \prettyref{th:highdim} and the minimax quadratic risk $R^*(n) \equiv \RComp(n)$ is defined as in~\eqref{eq:RComp}.


The following theorem (proved in \prettyref{seca:pf-de}) determines the sharp minimax risk up to logarithmic factors in the linear sampling regime ($p$ being a constant). 
Note that the upper bound is explicit and non-asymptotic, which allows us to recover the prior result on the optimal sampling complexity $\Omega(\frac{n}{\log n})$, i.e.~$p = \Omega(\frac{1}{\log n})$,  for consistent estimation.
\begin{theorem}
\label{thm:de} Fix $p \in (0,1)$. There exists a constant $c=c(p)>0$ such that
\begin{itemize}
	\item if $p \geq {1\over 2}$, then 
	\begin{equation}
	{c \over n} \le R^*(n) \le {1\over n}
	\label{eq:de1}
	\end{equation}	
	
	\item if $p< {1\over 2}$, then
	\begin{equation}
	{c\over \log^2 n} n^{-\frac{p}{1-p}} \le {R^*(n)} \le n^{-\frac{p}{1-p}},
	\label{eq:de2}
	\end{equation}
\end{itemize}
where the upper bound holds for all $n$ and the the lower bound holds for all $n \geq n_0=n_0(p)$.
Furthermore, this upper bound can be achieved within a constant factor by an estimator of the form 
\begin{equation}
\hat T = \sum_{i \in [n]} g(N_i)
\label{eq:linear-estimator}
\end{equation}
with $g(0)=0$, where the coefficient $g$ can be found by solving an LP of $O(n)$ variables and $O(n)$ constraints.
\end{theorem}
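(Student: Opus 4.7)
\textbf{Proof plan for Theorem~\ref{thm:de}.} The plan is to instantiate Theorem~\ref{th:highdim} with the right choice of parameter space, cost, and functional, reducing the problem to computing the $\chi^2$-modulus of continuity $\delchi(1/\sqrt{n})$. Specifically, take $\Theta = \calX = \{0,1,\ldots,n\}$ (the upper bound $n$ on $\theta_i$ is implied by the constraint $\sum \theta_i \le n$), $c(\theta) = \theta$, $h(\theta) = \indc{\theta\ge 1}$, and the observation kernel $P_\theta = \Bino(\theta,p)$. Then $\Pi = \{\pi \in \calP(\Theta): \Expect_\pi[\theta] \le 1\}$ is convex, and since $\Theta$ and $\calX$ are finite, assumptions A1--A4 are automatic (Remark 1 after Theorem~\ref{th:linear}). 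Assumption A6 holds with $\theta_0=0$, and A5 holds with $K_V \le 1/4$ since $h$ is $\{0,1\}$-valued. Invoking Theorem~\ref{th:highdim} gives
\[
\tfrac{1}{2400}\,\delchi(\tfrac{1}{\sqrt n})^2 - \tfrac{1}{8n} \;\le\; R^*(n) \;\le\; \delchi(\tfrac{1}{\sqrt n})^2,
\]
so the entire problem reduces to estimating $\delchi(t)$.

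Next I would establish the counterpart of Lemma~\ref{lmm:horo} in this model, namely
\[
\delchi(t) \;\lesssim\; t^{\min\{1,\,p/(1-p)\}}\quad\text{and}\quad \delchi(t) \;\gtrsim\; \bigl(t/\log(1/t)\bigr)^{\min\{1,\,p/(1-p)\}}\,.
\]
The analogy is exact: a ball being unobserved in the Bernoulli$(p)$ sampling model plays the role of a bit being erased with probability $\epsilon = 1-p$ in population recovery, so $(1-\epsilon)/\epsilon$ becomes $p/(1-p)$, producing the elbow at $p=1/2$. For $p\ge 1/2$ the upper bound $\delchi(t)\lesssim t$ follows from the dual formulation in \prettyref{eq:deltach_def} by exhibiting a single function $g:\{0,\ldots,n\}\to\reals$ with $g(0)=0$ such that $\Expect_{\Bino(\theta,p)}[g(N)] \approx \indc{\theta\ge 1}$ uniformly on $\{0,\ldots,n\}$; a bounded such $g$ exists by a direct construction (e.g.\ $g(k) = 1 - (-(1-p)/p)^k$ truncated, or a minimax polynomial approximation of the indicator). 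For $p<1/2$ the same dualization gives the polynomial approximation rate $t^{p/(1-p)}$ by approximating $\indc{\theta\ge 1}$ on $\{1,\ldots,L\}$ using polynomials in the transformed variable, with $L$ chosen as a function of $t$. The matching lower bound uses Le Cam's two-point method at the level of $\delchi$: construct priors $\pi,\pi'$ supported on $\{0\}\cup\{\text{large integers}\}$ whose binomial thinnings match on the relevant range — a moment-matching / Chebyshev construction from \cite{PSW17-colt}, which produces the unavoidable $\log$-factors and the threshold $n\ge n_0(p)$.

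The lower-bound construction for $p<1/2$ is the main obstacle: it requires controlling $\chi^2(\pi P\|\pi' P)$ via the generating-function / complex-analytic machinery developed for the population recovery problem, and adapting it to the mean-constrained (rather than support-constrained) parameter set. Specifically, the pair $(\pi,\pi')$ must have equal first moment (to lie in $\Pi$) yet agree on enough low-order binomial moments that the $\chi^2$ between their $p$-thinnings is at most $t^2$, while differing in total mass at $0$ by order $t^{p/(1-p)}$ up to logs. I would mirror the argument in \prettyref{lmm:horo} but, in view of the unbounded support, need a truncation step that trades a small polynomial tail bound against the mean constraint.

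Finally, for the algorithmic claim, I would observe that $\deltaa(1/\sqrt n)$ in \prettyref{eq:deltach_def} — whose optimizer supplies the estimator of the form \prettyref{eq:linear-estimator} — is a convex program over a vector $g\in\reals^{n+1}$ (one coefficient per value of $N_i$) with constraints parameterized by $\pi\in\Pi\subset\reals^{n+1}$. After linearizing the supremum by introducing slack variables for the bias term and handling $\sqrt{\Var_{\pi P}[g]}$ via its dual norm (or bounding it by a linear functional of $g$ at the optimum), the program collapses to a linear program of $O(n)$ variables and $O(n)$ constraints; one then verifies that an LP optimizer achieves $\delchi(1/\sqrt n)$ up to a constant factor via \prettyref{prop:delminmax}. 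The resulting $\hat T = \sum_i g(N_i)$ satisfies the estimator form asserted in the theorem.
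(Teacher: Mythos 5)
Your architecture matches the paper exactly: reduce via Theorem~\ref{th:highdim}, identify $\delchi$ with the population-recovery modulus (under the substitution $\epsilon = 1-p$), get the lower bound from the complex-analytic construction of \cite{PSW17-colt}, and extract a linear program from the bias-variance tradeoff. Two of your execution sketches, however, describe steps that the paper handles differently and that would cause trouble if followed literally.

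First, for the modulus upper bound you propose to re-derive the rate by exhibiting an explicit $g$ (or polynomial) approximating $\indc{\theta\ge 1}$. The paper avoids this: since $\Pi\subset\calP(\integers_+)$ is a subset of all priors, $\delchi(t)$ only decreases when you drop the mean constraint, and after also relaxing $\chi^2$ to $\TV$ you land directly on the quantity bounded in Lemma~\ref{lmm:horo} (the $d=\infty$ case of \eqref{eq:horo-tv}). So no new construction of $g$ is needed for the rate. Second, and more substantively, for the $p<1/2$ lower bound you suggest a ``truncation step'' to satisfy the mean constraint. The paper does not truncate the PSW priors — truncating on $\integers_+$ would require re-verifying the delicate $\chi^2$ control, which is exactly what the complex-analytic argument was engineered to give. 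Instead, the paper directly estimates the first moments of the PSW priors $\pi,\pi'$ (which come out bounded by $1/\eta$ for an explicit $\eta$ depending on the construction parameters), and then takes $\tilde\pi=(1-\eta)\delta_0+\eta\pi$, $\tilde\pi'=(1-\eta)\delta_0+\eta\pi'$. The mixing with $\delta_0$ both brings the mean under control and, by convexity, only decreases $H^2$; the price is a factor $\eta=\Theta(1/\log(1/t))$ in the separation, which is the source of one of the $\log$ factors. You would have to rediscover this mixing device rather than truncate. Finally, for the LP claim, the key linearization is simply $\sqrt{\Var_{\pi P}[g]}\le\|g\|_\infty$, after which the objective in \eqref{eq:deltach_def} becomes $\|Pg-h\|_\infty+\tfrac{1}{\sqrt n}\|g\|_\infty$, a bona fide LP whose value is controlled via \cite[Lemma 7]{PSW17-colt}; your ``dual norm / linear functional at the optimum'' phrasing doesn't pin this down, and the subsequent reduction to $g(0)=0$ also needs the small additional argument the paper supplies (the LP solution automatically has $|g^*(0)|\le 2\epsilon_n$, so zeroing it out only costs a constant).

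Your choice of $\Theta=\{0,\ldots,n\}$ rather than $\integers_+$ is fine (the constraint $\sum\theta_i\le n$ does force $\theta_i\le n$, and finiteness makes A3--A4 automatic), though it makes $\Pi$ vary with $n$ in a way that is harmless but mildly awkward when invoking the population-recovery estimates.
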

\begin{remark}[Linear estimator]
Estimators of the form \prettyref{eq:linear-estimator} are commonly known as \emph{linear estimators}, since they can be equivalently expressed as linear combinations of \emph{profiles} (also known as fingerprints) \cite{OSW15,VV11-focs}:
\[
\hat T = \sum_{j \geq 0} g(j) \Phi_j,
\]
where 
\begin{equation}
\Phi_j \triangleq \sum_i \indc{N_i = j},
\label{eq:Phi}
\end{equation}
 called the $j$th profile, denotes the number of colors that occurred exactly $j$ times in the sample.
Since \prettyref{thm:de} guarantees we can choose $g(0)=0$ in \prettyref{eq:linear-estimator}, the resulting estimator $\hat T = \sum_{j \geq 1} g(j) \Phi_j$ 
is fully data-driven and oblivious to the total number of possible colors, a desirable property in practice.
\end{remark}

\subsection{Application: Fisher's species problem}
\label{sec:species}

Dating back to Fisher \cite{FCW43}, 
\emph{predicting the unseen species} is a classical question in statistics, where given a sample of $n$ iid
 observations $X_1,\ldots,X_n$ drawn from an unknown probability discrete distribution $P=(p_x)$ on some countable alphabet $\calX$, the goal is to estimate the number of hitherto unobserved symbols that would be observed if a new sample of $X_1', \ldots, X_{m}'$ were collected, i.e.,
 \[
U = U_{n,m}  \triangleq |\{X_1', \ldots, X_m'\} \backslash \{X_1,\ldots,X_n\}|.
\]
In particular, the sequence $m\mapsto U_{n,m}$ is called the species discovery curve, which provides guidance on how many new species would be observed were $m$ more data points to be collected. For this reason, extrapolating the species discovery curve is of significant interest in various fields such as ecology \cite{FCW43,CL92}, computational linguistics \cite{ET76}, genomics \cite{ICL09}, etc. Clearly, the more future data we want to extrapolate, the more difficult it is to obtain a reliable prediction.

In order to frame the problem in the deterministic estimation setting, we consider the Poissonized version of the problem as studied in \cite{FCW43,ET76,OSW15}, where the sizes of the available and future (unobserved) samples are $N\sim\Poi(n)$ and $M\sim\Poi(m)$, respectively.
Due to the concentration of the Poisson distribution, standard arguments (see \prettyref{app:species-poisson}) show that the minimax risk bounds proved next apply to the model with fixed sample sizes with little change. Denote the histogram in the observed and unobserved sample by $N_x = \sum_{i\in[N]} \indc{X_i = x}$  and $N_x' = \sum_{i\in[M]} \indc{X_i' = x}$, respectively.
Then $\{N_x\} \inddistr \Poi(n p_x)$ and $\{N_x'\} \inddistr \Poi(m p_x)$ are independent of each.
In terms of histograms, the number of unseen species can be expressed as
\begin{equation}
U = \sum_x \indc{N_x = 0, N_x' > 0}.
\label{eq:U}
\end{equation}
Let $r \triangleq \frac{m}{n}$ denote the extrapolation ratio.
Denote the normalized minimax mean squared error of estimating $U$ by
\[
\calE_n(r) \triangleq \inf_{\hat U} \sup_P \frac{1}{m^2}\Expect_P[(\hat U-U)^2],
\]	
where the expectation is with respect to both the original and the future samples.
We emphasize that this problem is fully non-parametric and no assumptions are imposed on the distribution $P$.

It is known since Good and Toulmin \cite{GT56} that an unbiased estimator for $U$ is
\[
\hat U_{\rm GT} = - \sum_x (-1)^{N_x} \indc{N_x > 0} = 
\sum_{j\geq 1} -(-r)^j \Phi_j,
\]
where $\Phi_j$ is the $j$th profile defined in \prettyref{eq:Phi}.
If $r \leq 1$, that is, we extrapolate no more than what have been observed, this unbiased estimator achieves the (optimal) parametric rate
\begin{equation}
\frac{1}{m^2} \Expect[(U - \hat U_{\rm GT})^2] \lesssim \frac{1}{n}.
\label{eq:species-GT}
\end{equation}
However, 
for $r>1$, the variance of $\hat U$ is unbounded due to the exponential growth of the coefficients. 
Based on a technique called \emph{smoothing} that modifies the unbiased estimator to obtain a good bias-variance tradeoff, 
Orlitsky et al \cite{OSW15} constructed a family of estimators that 
encompass previous heuristics of Efron and Thisted \cite{ET76} and provably achieve the following prediction risk:
\begin{equation}
\calE_n(r) \lesssim n^{-\log_3(1+\frac{2}{r}) }.
\label{eq:species-OSW}
\end{equation}
Conversely, the following lower bound is also shown in \cite{OSW15}:
\[
\calE_n(r) \gtrsim  n^{-C/r }.
\]
for some absolute constant $C$.
Thus, one can extrapolate with a vanishing risk provided that $r = o(\log n)$, and this condition is the best possible. However, for fixed $r$, the optimal rate remains open. In particular, the above achievable results \prettyref{eq:species-GT} and \prettyref{eq:species-OSW} seem to suggest an ``elbow effect'' in 
the optimal convergence rate, which transitions from parametric rate to nonparametric rate when the extrapolation ratio $r$ exceeds $1$.
The following result resolves this question in the positive:
\begin{theorem}[Optimal rate for predicting the unseen]
\label{thm:species}
Let $r>0$ be a constant. 
There exist constants $c_0,c_1$ that depend only on $r$, such that the following holds.
\begin{itemize}
	\item If $r \leq 1$, then
\begin{equation}
\frac{c_0}{n} \leq \calE_n(r) \leq \frac{c_1}{n};
\label{eq:species-risk1}
\end{equation}
\item If $r>1$, then
\begin{equation}
\frac{c_0 n^{-\frac{2}{r+1}}}{\log^2 n}  \leq \calE_n(r) \leq  c_1 n^{-\frac{2}{r+1}} \log^4 n.
\label{eq:species-risk2}
\end{equation}
\end{itemize}
Furthermore, an estimator achieving the upper bound can be constructed and evaluated in time $O(n^a)$ for some absolute constant $a$.
\end{theorem}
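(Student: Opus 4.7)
Our plan is to recast the prediction of $U$ as a separable-functional estimation problem that fits the deterministic framework of \prettyref{sec:hd}, and then extract the rate from the associated $\chi^2$-modulus of continuity via polynomial approximation estimates. Writing $\theta_x \eqdef n p_x$, the natural parameter is the counting measure $\mu = \sum_x \delta_{\theta_x}$ on $[0,\infty)$ subject to $\int \theta\, d\mu \le n$, observed coordinatewise via the Poisson kernel $P_\theta = \Poi(\theta)$; the prediction target factors as $\Expect[U] = T(\mu)$, where $T(\mu) \eqdef \int h\, d\mu$ is an affine functional of $\mu$ and $h(\theta) \eqdef e^{-\theta}(1 - e^{-r\theta})$.

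The first step is to reduce predicting $U$ to estimating $\Expect[U]$. Since $U$ is a sum of independent Bernoullis with $\Var(U) \le \Expect[U] \le m$, one has $\frac{1}{m^2}\Expect[(\hat U - U)^2] \le \frac{2}{m^2}\Expect[(\hat U - \Expect U)^2] + \frac{2}{m}$, so up to the parametric remainder $O(1/n)$ the intrinsic difficulty is estimating $T(\mu)$. Because $h(\theta) \le e^{-\theta}$ is negligible for $\theta > L \eqdef C\log n$, I may further restrict the parameter space to $\Pi \eqdef \{\mu \in \calM_+[0,L] : \int \theta\, d\mu \le n\}$ at the cost of an $O(1/n)$ truncation bias. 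An adaptation of \prettyref{th:highdim} to this Poisson/unbounded-alphabet setting then identifies $\calE_n(r)$, up to absolute constants and the parametric term, with $\tfrac{1}{m^2}\delta^2$, where
\[
\delta \eqdef \sup\sth{T(\mu') - T(\mu) :\ \chi^2(\mu' P \| \mu P) \le 1,\ \mu,\mu'\in\Pi}.
\]

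For $r \le 1$, the unbiased Good--Toulmin estimator $\hat U_{\rm GT} = -\sum_x (-r)^{N_x}\indc{N_x \ge 1}$ has $\Var(\hat U_{\rm GT}) \le n$ by a direct Poisson MGF computation, so that $\calE_n(r) \lesssim 1/n$; the matching lower bound is the generic parametric bound of \prettyref{rmk:lb-parametric}. For $r > 1$, I would bound $\delta$ by duality. By \prettyref{prop:delminmax}, the upper bound reduces to choosing coefficients $g(k)$ (with $g(0)=0$) so that $\Expect_{\Poi(\theta)}[g] \approx h(\theta)$ uniformly in $\theta \in [0,L]$; after multiplying through by $e^\theta$, this is exactly best approximation of $1 - e^{-r\theta}$ by polynomials of degree $K$ on $[0,L]$. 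The dual LP produces two nonnegative measures on $[0,L]$ matching the first $K$ Poisson-weighted moments whose $T$-values differ by the same quantity, furnishing the lower bound. Classical complex-analytic estimates for entire functions of exponential type (via Chebyshev expansion and a saddle-point analysis) give best degree-$K$ approximation error of $e^{-r\theta}$ on $[0,L]$ of order $(crL/K)^K$ when $K \gtrsim L$; tuning $K \asymp L \asymp \log n$ and tracking the $\log n$ slack yields $\delta \asymp n^{1 - 1/(r+1)}$ up to logarithmic factors, from which $\calE_n(r) \asymp n^{-2/(r+1)}$ up to $\log^4 n$ factors.

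The main obstacle is the saddle-point computation pinpointing the exponent $\frac{1}{r+1}$ (improving on $\log_3(1+2/r)$ from \cite{OSW15}), together with the truncation analysis for $\theta_x > L$ and the reconciliation of the mass constraint $\int \theta\, d\mu \le n$ with the probability-measure framework of \prettyref{th:highdim}. Computational efficiency of the final estimator follows because the primal LP for the coefficients $g$ is of size $O(\log n)$.
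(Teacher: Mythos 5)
Your framing — reduce to estimating $\Expect[U] = T(\mu)$ with $h(\theta)=e^{-\theta}(1-e^{-r\theta})$, invoke the deterministic-setting machinery to identify the minimax rate with a $\chi^2$-modulus/bias--variance LP, and handle $r\le 1$ via Good--Toulmin plus the parametric lower bound — matches the paper's architecture. But the step where you actually evaluate the LP has a genuine gap. You propose degree-$K$ polynomial approximation of $e^{-r\theta}$ on $[0,L]$ and a ``Chebyshev plus saddle-point'' estimate, then tune $K\asymp L\asymp\log n$. That is precisely the strategy of \cite{OSW15} (degree-truncated / smoothed Good--Toulmin), and it yields the exponent $\log_3(1+\tfrac{2}{r})$, not $\tfrac{1}{r+1}$; for $r>1$ these differ, e.g.\ at $r=2$ one has $\log_3 2\approx 0.63$ versus $\tfrac{2}{3}$. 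The reason is that a degree cap on $g$ is the wrong constraint: the variance of $\hat T = \tfrac1n\sum g(N_i)$ is controlled by $\|g\|_\infty$, not by $\deg g$, and the optimal $g$ in the $\ell_\infty$-constrained LP is not a low-degree polynomial in the Poisson basis. You assert $(crL/K)^K$ as the approximation error but do not show it is achievable under the $\ell_\infty$ constraint that the LP actually imposes, and indeed it is not in the regime that matters.

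The paper's key technical ingredient is different: it passes to the dual of the $\|g\|_\infty$-constrained LP, recognizes the Poisson kernel as the substitution $z\mapsto z-1$ on Laplace/$z$-transforms, conformally maps the disk-and-halfplane pair to two vertical lines via $w=1+s/z$, and applies Hadamard's three-lines theorem to the strip to get $\delta(s,t)\le t^{2/s}$ exactly (Lemma 5). This is what produces $\tfrac{1}{r+1}$ rather than $\log_3(1+2/r)$. Similarly, your proposed lower bound via ``moment matching on Poisson-weighted moments'' is the classical construction, while the paper builds a specific prior pair via an inverse Laplace transform of $f_\alpha(z)=(z-1)^{-2}t^{-2/(z-\alpha)}$ and uses Paley--Wiener to certify nonnegativity/support, then Cauchy's integral formula for the $\chi^2$ bound. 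A secondary gap: you ``restrict the parameter space to $[0,L]$,'' but this is not a restriction the statistician controls — the adversary may place mass at arbitrary $\theta_x$, and the estimator's bias at large $\theta_x$ must be handled. The paper does this via Poisson sample-splitting (one split detects large $\theta_x$ via a threshold, the other feeds the LP-based estimator); without such a device the upper bound argument does not close.
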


It is worth mentioning that, unlike \prettyref{thm:de}, \prettyref{thm:species} does not directly follow from the general result in \prettyref{th:highdim} because of the infinite-dimensional nature of the species problem (the number of distinct species is potentially unbounded), which requires extra reduction argument. Furthermore, analyzing the behavior of the modulus of continuity (as a linear program) relies on delicate complex analysis, in particular, Hadamard's three-lines theorem and the Paley-Wiener theorem. The proof of \prettyref{thm:species} is provided in \prettyref{seca:pf-species}.

\begin{remark}[Species versus distinct elements problem] 
There is an obvious connection between the species problem considered here and the distinct elements problem considered in \prettyref{sec:de}: 
Treating the union of observed and unobserved samples $\{X_1,\ldots,X_n,X_1',\ldots,X_m'\}$ as the content of an urn, the former can be viewed as a special case of the latter with the urn size being $n+m$ and the fraction of observation being $p = \frac{n}{m+n} = \frac{1}{1+r}$.
Thus, for the interesting case of $r>1$, applying \prettyref{thm:de} yields 
the upper bound $\calE_n(r) \leq O(n^{-\frac{1}{r}})$.
Perhaps surprisingly, this strategy turns out to be suboptimal in view of \prettyref{thm:species}. This suggests that the optimal
estimator for the species problem is able to exploit the special structure in the color configuration arising from iid sampling.
\end{remark}

\section{Exponential families}
\label{sec:exp}

Let us revisit the setting of Theorem~\ref{th:linear} in the special case when $\Theta$ and $\calX$ are both
finite. Given a closed convex set $\Pi \subset \Theta$ we define 
	$$ M_0 \eqdef \{\mu: \mu = \pi P, \pi \in \Pi\}\,,$$
which again is closed and convex.
Any (identifiable) linear functional $T(\pi)$ can in turn be represented as a linear functional of $\mu$. Hence, the
statistical problem at hand becomes: Given $X_i \simiid \mu \in M_0$ estimate $T(\mu) \eqdef \sum_{x\in\calX} \mu(x)
h(x)=\Iprod{\mu}{h}$, where $h:\calX \to \mreals$ is a fixed function. 

Let us restate the problem in the language of exponential families. Without loss
of generality, let $\calX = [d] \eqdef \{1,\ldots,d\}$. Then for $j\in [d]$ let $\phi_j(x) = \Indc\{x=j\}$,
and denote $\phi(x) = (\phi_1(x)\,\ldots,\phi_m(x))$.
For every $\gamma = (\gamma_1,\ldots,\gamma_d) \in \mreals^d$ we define a distribution on $\calX$
	$$ P_\gamma(x) = \exp\{\langle \gamma, \phi(x)\rangle - C(\gamma)\}\,,$$
where $C(\gamma)$ is chosen from normalization; more explicitly, $P_\gamma(x)=\frac{\exp(\gamma_x)}{\sum_{x\in\calX} \exp(\gamma_x)}$. We can see that $P_\gamma$ forms an exponential family with
\textit{natural parameters} $\gamma$ and \textit{mean parameters} $\mu_f(\gamma) = \EE_{X\sim P_\gamma}[\phi(X)]$.
Theorem~\ref{th:linear} then shows that there exists $g$ such that the empirical-mean estimator
\begin{equation}\label{eq:exp_estim}
		\hat T(X_1,\ldots,X_n) = {1\over n} \sum_{i=1}^n g(X_i)
\end{equation}	
that achieves the minimax rate for estimating $T(P_\gamma) = \langle \mu_f(\gamma), h\rangle$ from
$X_i\simiid P_\gamma$ over the class $\{P_\gamma: \mu_f(\gamma) \in M_0\}$.

It turns out that this result can be extended: many other exponential families (i.e.~different choices of $\phi:\calX \to
\mreals^d$) still enjoy the same property of (near) optimality of empirical-mean estimators. This extends the result
of~\cite{JN09} to square loss and a wider class of exponential families (see discussion in Section~\ref{sec:jn_compare}). 
We proceed
to formal definitions.

A $d$-dimensional exponential family $\{P_\gamma\}_{\gamma \in \Gamma}$ of probability distributions on a measurable
space $\calX$ is given by a triplet $(\nu, \phi, \Gamma)$, where $\nu$ is a reference measure on $\calX$,
$\phi:\calX \to \mreals^d$ is a measurable map, $\Gamma \subset \mreals^d$ and 
	$$ P_\gamma(dx) = \exp\{\iprod{\gamma}{\phi(x)} - C(\gamma)\} \nu(dx)\,, $$
with $\gamma \in \mreals^d$ called the \emph{natural parameter}. 
Let $\matf$ be the finite-dimensional linear space spanned by basis functions $\phi_{i}$, i.e., 
	$\matf = \{\iprod{h}{\phi}: h\in\reals^d\}$. We make two standing assumptions on
the exponential family:\footnote{Note that the second assumption is without loss of generality: if there is a linear
relation between coordinates of $\phi$,
then by reducing the dimension $d$ we eventually will make the second assumption hold.}
\begin{enumerate}
\item \label{E1}
The set $\Gamma$ is open and convex; $C(\gamma) < \infty$ for all $\gamma \in \Gamma$.\apxonly{\\I only need
convexity of $\Gamma$ to conclude that $\gamma \mapsto \mu_f(\gamma)$ is one-to-one.}
\item \label{E2}
For some $\gamma_0 \in \Gamma$ (and hence for all $\gamma$ by absolute continuity $P_\gamma \ll P_{\gamma_0}$), the 
functions $\phi_{1},\ldots, \phi_{d}$ are linearly independent, i.e.
\begin{equation}\label{eq:exp_nondeg}
		\Var_{X\sim P_{\gamma_0}} (\iprod{\phi(X)}{h}) > 0 \qquad \forall h \in \mreals^d \setminus \{0\}\,.
\end{equation}	
\end{enumerate}

In addition to the natural parameter $\gamma$, we define the \emph{mean parameter} $\mu$ via the forward map
	$$\mu_f(\gamma) \eqdef \EE_{X\sim P_\gamma}[\phi(X)]\,. $$
It is well known (see \eg \cite{Brown.EF}) that inside $\Gamma$ the function $\gamma\mapsto C(\gamma)$ is infinitely differentiable, whose first two derivatives 
give the mean and covariance of $\phi(X)$:
	\begin{equation}
	\mu_f(\gamma) = \nabla C(\gamma), \qquad {\partial \mu_f \over \partial \gamma} = \mathrm{Hess}\, C(\gamma) =
	\Cov_{P_\gamma}[\phi(X)] \eqdef
	\Sigma(\gamma)\,.
	\label{eq:moments-exp}
	\end{equation}	
The non-degeneracy assumption~\eqref{eq:exp_nondeg} implies 
\begin{equation}\label{eq:exp_nondeg2}
		\Sigma(\gamma) \succ 0 \qquad \forall \gamma \in \Gamma\,.
\end{equation}
Since $C(\gamma)$ is, thus, strictly convex on $\Gamma$, the map $\gamma \mapsto \mu_f = \nabla C(\gamma)$ is one-to-one. 
Since the Jacobian of this map is non-zero everywhere
on $\Gamma$, by the inverse function theorem the image $M\eqdef \mu_f(\Gamma)$ is an open set in $\mreals^d$ and, furthermore, there is an
infinitely-differentiable inverse map $\gamma_r$ such that
	$$ \mu_f(\gamma_r(\mu)) = \mu \qquad \forall \mu\in M\,.$$
It is also known that Jacobian of $\gamma_r$ can be computed as
\begin{equation}\label{eq:exp_jacinv}
		{\partial \gamma_r(\mu)\over \partial \mu} = {\Sigma^{-1}(\gamma_r(\mu))}\,.
\end{equation}	
For convenience we denote $\tilde P_\mu = P_{\gamma_r(\mu)}$ and $\tilde \Sigma(\mu) = \Sigma(\gamma_r(\mu))$. 

For a given constraint set $\Gamma_0 \subset \Gamma$ and a functional $T(\gamma)$,  we define the minimax
square-loss as usual
\begin{equation}
R_n^*(\Gamma_0) = \inf_{\hat T} \sup_{\gamma\in\Gamma_0} \EE_{X_i \simiid P_\gamma}[|\hat T(X_1,\ldots, X_n) -
	T(\gamma)|^2]\,.
\label{eq:Rn-exp}
\end{equation}
 
The main finding in this section is that for estimating linear functionals of the mean parameter $\mu$, 
under certain convexity assumptions (that are strictly weaker than those in \cite{JN09}), the minimax quadratic risk is characterized by 
certain moduli of continuity within universal constant factors. 
To this end, let $\omega_H$
 denote the modulus of continuity of $T$ on $M_0$ with respect to the Hellinger distance, i.e.
 \begin{align}
\omega_H(t) &\eqdef \sup_{\gamma,\gamma' \in \Gamma_0} \{T(\gamma)-T(\gamma'): H(P_\gamma,
		P_{\gamma'}) \le t\},
\end{align}

\begin{theorem}\label{th:exp} There exist absolute constants $c_0>0$ and $c_1>0$ with the following property. Fix any
dimension $d\ge 1$ and any exponential family $(\nu, \phi, \Gamma)$ satisfying regularity assumptions \ref{E1} and \ref{E2} above. Consider a subfamily of an exponential family corresponding to mean parameters $\mu
\in M_0 \subset M \subset \mreals^d$, where $M_0$ is compact and convex. Assume that the subfamily $M_0$ satisfies the key
condition
\begin{equation}\label{eq:exp_a2}
	\mu \mapsto \sqrt{\Var_{P_\mu}[\phi]} \mbox{ is concave in $\mu \in M_0$ for all $\phi \in \matf$} .
\end{equation}
Let the functional $T(\gamma)$ be \emph{linear in the mean parameter}, i.e.,
\begin{equation}\label{eq:tlin}
		T(\gamma) = \iprod{h}{\mu_f(\gamma)}
\end{equation}	
for some $h\in\reals^d$, and define the constraint set $\Gamma_0 = \gamma_r(M_0)$. Then  we have 
\begin{equation}\label{eq:exp_main}
			c_0 \omega_H(1/\sqrt{n}) \le \sqrt{R_n^*(\Gamma_0)} \le c_1 \omega_H(1/\sqrt{n}),,
\end{equation}
and this rate is achieved by the estimator of the type~\eqref{eq:exp_estim} with $g\in\calF$.
\end{theorem}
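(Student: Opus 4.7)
The structure will parallel Theorem~\ref{th:linear} and Proposition~\ref{prop:delminmax}, with two key adaptations needed because the ``prior'' family $\{P_\gamma : \gamma \in \Gamma_0\}$ is \emph{not} convex as a subset of $\calP(\calX)$: we parametrize instead by the mean $\mu\in M_0$ (which \emph{is} convex), and we restrict the class of estimators to the finite-dimensional span $\calF$ of the sufficient statistics. The concavity condition~\eqref{eq:exp_a2} will be used precisely where the proof of Proposition~\ref{prop:delminmax} relies on concavity of $\mu\mapsto\sqrt{\Var_{\pi P}[g]}$.

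For the \textbf{lower bound}, I would apply the standard $H^2$-version of Le Cam's two-point method. By definition of $\omega_H$, pick $\gamma,\gamma'\in\Gamma_0$ nearly achieving $\omega_H(c/\sqrt{n})$ with $H(P_\gamma,P_{\gamma'})\le c/\sqrt{n}$. The Hellinger affinity tensorizes as $1-\tfrac12 H^2(P_\gamma^{\otimes n},P_{\gamma'}^{\otimes n}) = (1-\tfrac12 H^2(P_\gamma,P_{\gamma'}))^n$, so choosing $c$ small the tensorized Hellinger distance stays bounded away from $2$. The two-point inequality (as used in Corollary~\ref{cor:linthm}) then yields $\sqrt{R_n^*(\Gamma_0)}\gtrsim \omega_H(1/\sqrt{n})$ with an absolute constant.

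For the \textbf{upper bound}, consider the empirical-mean estimator $\hat T_{h,c}=\tfrac{1}{n}\sum_{i=1}^n(\iprod{h}{\phi(X_i)}+c)$, i.e.~$g\in\calF+\reals$ (constants are added exactly as in Proposition~\ref{prop:delminmax} to force $\xi=1$ below; they can be absorbed into an estimator of the claimed form). Since $\EE_{P_\mu}[g]=\iprod{h}{\mu}+c$ and $\Var_{P_\mu}[g]=h^\top\Sigma(\mu)h$, bias-variance gives
\[
\sqrt{R_n^*(\Gamma_0)} \le \inf_{(h,c)}\sup_{\mu\in M_0}\left\{\,|\iprod{h_0-h}{\mu}-c|+\tfrac{1}{\sqrt n}\sqrt{h^\top\Sigma(\mu)h}\,\right\}.
\]
I would then apply Ky Fan's theorem (Theorem~\ref{thm:minimax}) after the $\xi\in[0,2]$ linearization of the absolute value exactly as in the proof of Proposition~\ref{prop:delminmax}. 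The required concave-like property in $u=(\mu,\mu',\xi)\in M_0\times M_0\times[0,2]$ decomposes into: (i) affinity of $T$ and of $\EE_{P_\mu}[g]$ in $\mu$ (since both are linear in $\mu$), and (ii) concavity of $\mu\mapsto\sqrt{h^\top\Sigma(\mu)h}$, which is \emph{exactly} assumption~\eqref{eq:exp_a2}. Compactness of $u$ and continuity in $u$ are immediate from compactness of $M_0$ and smoothness of $C$.

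After the swap, the inner infimum over $(h,c)$ is computed as in the proof of Proposition~\ref{prop:delminmax}: the constant $c$ forces $\xi=1$ (else the infimum is $-\infty$), and with $a=\mu-\mu'$ the substitution $k=\Sigma(\mu)^{1/2}h$ reduces $\inf_h\{\iprod{h_0-h}{a}+t\sqrt{h^\top\Sigma(\mu)h}\}$ to $\iprod{h_0}{a}+\inf_k\{-\iprod{k}{\Sigma(\mu)^{-1/2}a}+t|k|\}$, which by Cauchy--Schwarz equals $\iprod{h_0}{a}$ if $a^\top\Sigma(\mu)^{-1}a\le t^2$ and $-\infty$ otherwise. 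This identifies the bound as a ``Fisher modulus''
\[
\sqrt{R_n^*(\Gamma_0)}\le \omega_F\!\pth{\tfrac{1}{\sqrt n}}, \quad \omega_F(t)\eqdef \sup\left\{\iprod{h_0}{\mu-\mu'}:\, \mu,\mu'\in M_0,\; (\mu-\mu')^\top\Sigma(\mu)^{-1}(\mu-\mu')\le t^2\right\}.
\]

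The \textbf{main obstacle} is the final comparison $\omega_F(t)\le c_1\omega_H(t)$ with an \emph{absolute} constant (independent of $d$, $M_0$, and the family). This is equivalent to the pointwise bound $H^2(P_\mu,P_{\mu'})\ge c\cdot\min\{1,(\mu-\mu')^\top\Sigma(\mu)^{-1}(\mu-\mu')\}$ on $M_0\times M_0$; combined with subadditivity of $\omega_H$ (Proposition~\ref{prop:deltaproperty}) it gives $\omega_F(t)\lesssim\omega_H(t)$. To prove this pointwise bound I would use the explicit exponential-family identity $H^2(P_\gamma,P_{\gamma'})=2(1-e^{-J(\gamma,\gamma')})$ with $J(\gamma,\gamma')=\tfrac12(C(\gamma)+C(\gamma'))-C(\tfrac{\gamma+\gamma'}{2})$, integrate the second derivative of $C$ (which is $\Sigma$) along the straight line in $\Gamma$ joining $\gamma_r(\mu)$ and $\gamma_r(\mu')$, and exploit the concavity assumption~\eqref{eq:exp_a2} to transfer the local Fisher--Rao lower bound from natural to mean parameters uniformly over the convex set $M_0$. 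This step, I expect, is where all delicate work lies; the rest of the argument is a direct adaptation of Proposition~\ref{prop:delminmax}.
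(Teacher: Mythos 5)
Your decomposition of the argument — two‑point lower bound via Hellinger, upper bound via empirical‑mean estimators and Ky Fan duality (with the $\xi\in[0,2]$ trick to linearize the absolute value), leading to a Fisher‑type modulus $\omega_F$, and then a final comparison $\omega_F\lesssim\omega_H$ — matches the structure of the paper's proof (where $\omega_F$ appears as $\omega_d$ and the duality step is packaged as Lemma~\ref{lem:cau}). The part of the proposal up through the identification of $\omega_F$ is essentially correct.

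The final comparison is where the proposal breaks down, and in two ways. First, the direction of the claimed pointwise inequality is reversed: to obtain $\omega_F(t)\lesssim\omega_H(t)$ you need the Fisher‑feasible set to be contained in a scaled Hellinger‑feasible set, i.e.\ $H^2(P_\mu,P_{\mu'})\lesssim(\mu-\mu')^\top\Sigma(\mu)^{-1}(\mu-\mu')$, not the lower bound $H^2\gtrsim\min\{1,\cdot\}$ you wrote (the latter gives only the trivial direction $\omega_H\lesssim\omega_F$, which you already get from the two‑point argument). Second — and this is the genuine obstacle — even the correct‑direction pointwise bound $H^2\lesssim(\mu-\mu')^\top\Sigma(\mu)^{-1}(\mu-\mu')$ is \emph{false} under the hypotheses of the theorem: the Jeffreys divergence can be written as $d_J(P_{\gamma_1},P_{\gamma_2})=\int_0^1(\mu_1-\mu_2)^\top\tilde\Sigma^{-1}(\lambda\mu_1+\bar\lambda\mu_2)(\mu_1-\mu_2)\,d\lambda$, and the concavity assumption~\eqref{eq:exp_a2} only yields $\tilde\Sigma^{-1}(\lambda\mu_1+\bar\lambda\mu_2)\preceq\lambda^{-2}\tilde\Sigma^{-1}(\mu_1)$, so the integral is controlled by $\tilde\Sigma^{-1}$ at \emph{both} endpoints; when $\Var_{P_{\mu_1}}$ is far smaller than $\Var_{P_{\mu_2}}$ (which concavity permits), $\tilde\Sigma^{-1}(\mu_1)$ dominates and the Hellinger distance can be much larger than the Fisher distance anchored at $\mu_2$.

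The paper circumvents this by a non‑pointwise device that your sketch does not capture: given a Fisher‑feasible pair $(\mu_1,\mu_2)$, it replaces it with the interior pair $\mu_1'=\tfrac23\mu_1+\tfrac13\mu_2$, $\mu_2'=\tfrac13\mu_1+\tfrac23\mu_2$. Applying the key consequence of concavity $\tilde\Sigma^{-1}(\lambda\mu_1+\bar\lambda\mu_2)\preceq\lambda^{-2}\tilde\Sigma^{-1}(\mu_1)$ (equation~\eqref{eq:exp_key}) \emph{at the shrunken points} shows that both $\tilde\Sigma^{-1}(\mu_1')$ and $\tilde\Sigma^{-1}(\mu_2')$ are uniformly controlled by $\tilde\Sigma^{-1}(\mu_2)$, which in turn bounds $d_J(\tilde P_{\mu_1'},\tilde P_{\mu_2'})$ — and hence $H^2$ — by the original Fisher distance, while the functional separation shrinks only by a factor of $\tfrac13$. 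This ``shrink toward the midpoint'' step is the missing idea, and it is precisely why the comparison is between the moduli (suprema) rather than a pointwise inequality between divergences.
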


The proof is given in \prettyref{seca:pf-exp}. 
We stress that constants $c_0,c_1$ in~\eqref{eq:exp_main} do not depend on dimension of the exponential family, and thus
as in~\cite{JN09} we can think of the above result as essentially non-parametric.

\begin{remark} Note that in the setting of the preceding \prettyref{th:exp}, we have 
	$$ T(\gamma) = \EE_{X\sim P_\gamma}[\phi_0(X)]\,,$$
	for some $\phi_0 \in \matf$. Thus, it may appear that a good estimator would arise from taking $g =\phi_0$
	in~\eqref{eq:exp_estim}. Indeed, it gives an unbiased estimator by design. However, the subtlety here is that
	$\Var_{P_\gamma}[\phi_0(X)]$ might be prohibitively large
	(such as in population recovery in \prettyref{sec:poprec}). The main discovery here is that the concavity condition~\eqref{eq:exp_a2}
	guarantees existence of some other $g \not= \phi_0$ such that the empirical average of $g$ is minimax rate-optimal.
\end{remark}

\begin{remark} To shed some light on how assumption~\eqref{eq:exp_a2} relates to the tightness of empirical-mean estimators, we observe that the Fisher information matrix for
parameter $\gamma$ is given by $ I_F(\gamma) = \Sigma(\gamma)\,,$
while for parameter $\mu$ we get $ I_F(\mu) = \tilde\Sigma^{-1}(\mu)\,.$
In one dimension $d=1$, we see that~\eqref{eq:exp_d2x} shows that $R_n^*(M_0) \le {1\over n \min_{\mu} I_F(\mu)}$.
From the Bayesian Cram\'er-Rao lower bound (van Trees inequality) \cite{GL95}, we expect a similar lower bound to hold, unless $I_F(\mu)$ grows very rapidly around its minimum.
The latter situation is prohibited by the assumption~\eqref{eq:exp_a2}, as shown by the key
inequality~\eqref{eq:exp_key}. Thus, assumption~\eqref{eq:exp_a2} enters our proof in two crucial ways: for the
applicability of the minimax theorem and for taming the behavior of Fisher information. Because of the latter, it is unclear
whether~\eqref{eq:exp_a2} can be extended from concavity to, say, quasi-concavity.
\end{remark}

\subsection{Comparison to Juditsky-Nemirovski~\cite{JN09}}\label{sec:jn_compare}

	As opposed to the squared loss \prettyref{eq:Rn-exp}, 
Juditsky-Nemirovski~\cite{JN09} considered the $\epsilon$-quantile loss and the corresponding minimax risk:
\begin{align*}
R^*_{n,\epsilon}(\Gamma_0) 
			   &\triangleq \inf_{\hat T} \sup_{\gamma \in \Gamma_0} \inf\sth{r: P_\gamma[|\hat
T(X_1,\ldots,X_n) - T(\gamma)| > r] \le \epsilon}.
\end{align*}
Nevertheless, Theorem~\ref{th:exp} proved for the quadratic risk can be translated to the $\epsilon$-quantile loss similarly as done in Corollary~\ref{cor:linthm}.
\begin{corollary}\label{cor:exp} 
	In the setting of Theorem~\ref{th:exp}, whenever $e^{-2n} \le \epsilon \le 2^{-8} $ we have up to absolute
	constants of proportionality
\begin{equation}\label{eq:rne_exp}
		R^*_{n,\epsilon}(\Gamma_0) \asymp \omega_H\left(\sqrt{{1\over n} \ln {1\over \epsilon}}\right)\,. 
\end{equation}	
\end{corollary}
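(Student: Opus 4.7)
The strategy is to mimic the derivation of Corollary~\ref{cor:linthm} from Theorem~\ref{th:linear}, replacing $\delta_{\chi^2}$ by $\omega_H$ and invoking Theorem~\ref{th:exp} in place of Theorem~\ref{th:linear}. The two-point Le Cam lower bound in Hellinger form applies verbatim to any statistical model, while the matching upper bound is transferred from quadratic risk to $\epsilon$-quantile risk by the standard ``median trick''.

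For the \emph{lower bound}, fix any pair of natural parameters $\gamma,\gamma'\in\Gamma_0$ and let $\beta$ be defined by $H^2(P_\gamma^{\otimes n},P_{\gamma'}^{\otimes n})=2-2\beta$. By \cite[Theorem 2.2]{Tsybakov09}, whenever $\beta^2>1-(1-2\epsilon)^2$ (which is ensured by $\beta\ge\sqrt{4\epsilon}$), one has $R^*_{n,\epsilon}(\Gamma_0)\ge \tfrac{1}{2}|T(\gamma)-T(\gamma')|$. Using the tensorization identity $1-\tfrac{1}{2}H^2(P_\gamma^{\otimes n},P_{\gamma'}^{\otimes n})=(1-\tfrac{1}{2}H^2(P_\gamma,P_{\gamma'}))^n$, this is guaranteed once $H^2(P_\gamma,P_{\gamma'})\le t_n^2$ with $t_n^2=2-2(4\epsilon)^{1/(2n)}$. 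For $e^{-2n}\le\epsilon\le 2^{-8}$ one checks $t_n^2\ge c\,\frac{1}{n}\ln\frac{1}{\epsilon}$ for an absolute constant $c>0$, so taking the supremum over admissible pairs $\gamma,\gamma'$ yields
\[
R^*_{n,\epsilon}(\Gamma_0)\;\ge\;\tfrac{1}{2}\,\omega_H\!\left(\sqrt{c\,\tfrac{1}{n}\ln\tfrac{1}{\epsilon}}\right).
\]
Because $\omega_H$ coincides (up to renaming the argument) with $\delta_{H^2}$ applied to the Hellinger modulus, the subadditivity property $\delta_{H^2}(\lambda t)\ge\lambda\delta_{H^2}(t)$ for $\lambda\in[0,1]$ in Proposition~\ref{prop:deltaproperty} absorbs the constant $\sqrt{c}$ and yields the left-hand side of~\eqref{eq:rne_exp}.

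For the \emph{upper bound}, set $L=\lceil 8\ln(2/\epsilon)\rceil$ and suppose a total of $N=2nL$ iid observations are available. Partition them into $2L$ disjoint subsamples of size $n$, and apply Theorem~\ref{th:exp} to produce estimators $\hat T_1,\ldots,\hat T_{2L}$, each achieving $\EE_\gamma[(\hat T_j-T(\gamma))^2]\le c_1^2\,\omega_H(1/\sqrt{n})^2$. Chebyshev's inequality with $\rho=2c_1\,\omega_H(1/\sqrt{n})$ then gives $P_\gamma[|\hat T_j-T(\gamma)|>\rho]\le\tfrac14$ for each $j$, and Hoeffding's inequality applied to the binomial count of ``bad'' subsamples shows that the median $\hat T$ satisfies $P_\gamma[|\hat T-T(\gamma)|>\rho]\le e^{-L/8}\le\epsilon$. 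Hence $R^*_{N,\epsilon}(\Gamma_0)\le\rho=2c_1\,\omega_H(1/\sqrt{n})$. Re-expressing this in terms of $N$ using $n=N/(2L)$ and invoking the subadditivity of $\omega_H$ absorbs the harmless multiplicative factor $\sqrt{2L}\asymp\sqrt{\ln(1/\epsilon)}$ inside the argument, which yields the right-hand side of~\eqref{eq:rne_exp}.

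The only nontrivial item is the bookkeeping that transfers the bounds $\omega_H(\sqrt{c\,\tfrac{1}{n}\ln\tfrac{1}{\epsilon}})$ and $\omega_H(\sqrt{C\,\tfrac{1}{n}\ln\tfrac{1}{\epsilon}})$ into the common form $\omega_H\!\bigl(\sqrt{\tfrac{1}{n}\ln\tfrac{1}{\epsilon}}\bigr)$ up to absolute constants; this is exactly what Proposition~\ref{prop:deltaproperty} provides, so no new obstacle arises. The lower range $\epsilon\ge e^{-2n}$ guarantees $t_n^2\gtrsim\tfrac{1}{n}\ln\tfrac{1}{\epsilon}$, and the upper range $\epsilon\le 2^{-8}$ ensures both $L\ge 1$ and the Chebyshev condition $\beta\ge\sqrt{4\epsilon}$ are usable with the numerical constants above.
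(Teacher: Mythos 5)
Your overall skeleton matches the paper's own: a Hellinger two-point lower bound and a median-trick upper bound, both lifted from the treatment of Corollary~\ref{cor:linthm}, with Theorem~\ref{th:exp} substituted for Theorem~\ref{th:linear}. Those two halves are fine.

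The gap is in the constant-absorption step. You write that ``$\omega_H$ coincides (up to renaming the argument) with $\delta_{H^2}$'' and then invoke the subadditivity~\eqref{eq:delH2_sub} from Proposition~\ref{prop:deltaproperty}. This does not transfer to the exponential-family setting. Proposition~\ref{prop:deltaproperty} proves concavity/subadditivity of $\delta_{H^2}$ by exploiting that $T(\pi)$ and $\pi\mapsto\pi P$ are \emph{affine} in the prior $\pi$ and that $H(\cdot,\cdot)$ is jointly convex; given a convex combination of two feasible pairs of priors, one can form the convex combination and still satisfy the divergence constraint. In Theorem~\ref{th:exp}, $\omega_H$ is a modulus over natural parameters $\gamma\in\Gamma_0$ (equivalently mean parameters $\mu\in M_0$), and although $T(\gamma)=\langle h,\mu_f(\gamma)\rangle$ is affine in $\mu$, the map $\mu\mapsto \tilde P_\mu$ is \emph{not} affine (it is the exponential of a linear function of $\gamma_r(\mu)$). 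Consequently joint convexity of $H(P,Q)$ in $(P,Q)$ does not imply convexity of $(\mu,\mu')\mapsto H(\tilde P_\mu,\tilde P_{\mu'})$, and the concavity of $\omega_H$ is not automatic. The paper signals this explicitly in its one-line proof: the constant absorption is ``done exactly as in the proof of Theorem~\ref{th:exp}'', i.e.\ via the chain $\omega_d(c_2 t)\le \omega_J(t)\le\omega_H(t)$ from~\eqref{eq:bbj_1} and~\eqref{eq:exp_p2} together with the (directly verifiable from~\eqref{eq:exp_d2x}) subadditivity $\omega_d(ct)\ge c\,\omega_d(t)$ in~\eqref{eq:exp_p2b}, culminating in~\eqref{eq:bbj_3}. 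To repair your argument you should replace the appeal to Proposition~\ref{prop:deltaproperty} by that $\omega_H\asymp\omega_d$ sandwich and the subadditivity of $\omega_d$; this is also where assumption~\eqref{eq:exp_a2} is doing real work.
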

\begin{proof} The lower bound is proved by a Hellinger-based two-point argument as in Corollary~\ref{cor:linthm}. The upper bound follows from the same median trick as in Corollary~\ref{cor:linthm}. Bounding $\omega_H(ct)$ by $\omega_H(t)$ from
above and below is done exactly as in the proof of Theorem~\ref{th:exp}.
\end{proof}

We now discuss results of~\cite{JN09}. The following assumptions are made in \cite{JN09} (later called \textit{a simple
observation schemes} in~\cite[Section 2.4.2]{JNbook})
\begin{enumerate}
\item 
The exponential family $(\nu, \phi,\Gamma)$ has $\Gamma=\mreals^d$, i.e.~the natural parameters 
	$\gamma$ can range over the entire space $\mreals^d$.
\item 
The functional $T(\gamma) = T(A(\xi))$ is affine in $\xi$, where $\gamma = A(\xi)$ is a reparametrization such that the map
\begin{equation}\label{eq:exp_ajn}
		\xi \mapsto C(A(\xi)+a) - C(A(\xi)) \,\mbox{ is concave for every $a\in\mreals^d$.}
\end{equation}
\end{enumerate}
	Under these assumptions, it is shown that	(cf.~\cite[Theorem 3.1 and Proposition 3.1]{JN09})
	\begin{equation}
	\frac{1}{2} \omega_H\pth{\sqrt{2 \pth{1 - e^{-\frac{1}{2n} \log \frac{1}{4\epsilon} }}}}
	\leq 	R^*_{n,\epsilon} \leq \frac{1}{2} \omega_H\pth{\sqrt{2 \pth{1 - e^{-\frac{1}{n} \log \frac{2}{\epsilon} }}}};
	\label{eq:JN09-main1}
	\end{equation}
in particular, whenever $ \exp(-2n) \leq \epsilon < \frac{1}{5}$, we have~\eqref{eq:rne_exp} (also within absolute
constants).
%
%
Thus, to compare our results with~\cite{JN09} we need to compare the
assumptions. It turns out (see \prettyref{seca:pf-exp} for a proof)
that~\eqref{eq:exp_ajn} is equivalent to the following requirement
	\begin{equation}\label{eq:exp_a2star}
	\xi \mapsto \mu_f(A(\xi)) \text{ is affine and }
	\mu \mapsto \Var_{P_\mu}[\phi(X)] \mbox{ is concave in $\mu \in M_0$ for all $\phi \in \matf$}.
	\end{equation}	
	This equivalence shows that our condition~\eqref{eq:exp_a2} is strictly weaker than~\eqref{eq:exp_a2star}. Let
	us consider a simple example showing difference between~\eqref{eq:exp_a2} and~\eqref{eq:exp_a2star}.

\begin{example}[Exponential distribution]
\label{ex:exp}
Let $\calX = \mreals^d_+$, $\gamma \in \mreals^d_+$ and take $P_\gamma(dx) = \prod_{i=1}^d e^{-\gamma_i x_i} \indc{x_i>0} dx_i$, i.e.
$X\sim P_\gamma$ has $d$ independent components, each exponentially distributed. In this case $\phi(x) = x$. 
The mean parameters are $\mu = (\gamma_1^{-1}, \ldots, \gamma_d^{-1})$. 
Our goal is to estimate $T(\gamma) = \sum_{i=1}^d \mu_i$ over the $\ell_p$-ball in $\mreals^d$:
$M_0 =\{\mu: \sum_{i=1}^d \mu_i^p \leq 1\}$, where $p\ge 1$. A simple calculation shows
	\begin{equation}\label{eq:exp_exp}
		\omega_H(t) \asymp t d^{\max(\tfrac{1}{2}-\tfrac{1}{p},0)} 
\end{equation}	
	up to absolute (i.e. $p$-independent) constants.
	\apxonly{More precisely, $$ {1\over 2} t d^{\max(\tfrac{1}{2}-\tfrac{1}{p},0)} \le \omega_J(t) \le t
	d^{\max(\tfrac{1}{2}-\tfrac{1}{p},0)}\,.$$}%
	(For $p \le 2$ the worst pair $(\mu,\mu')$ are $1$-sparse (with a single nonzero), whereas for $p>2$ they are scaled
constant vectors.) From \prettyref{th:exp} we conclude that the minimax quadratic risk is $\Theta({1\over n}
d^{\max(1-\frac{2}{p},0)})$. In this simple case the empirical mean $\hat T = {1\over n} \sum_{t=1}^n \sum_{i=1}^d
(X_t)_i$ achieves the optimal rate for all $p,d$, suggesting that the problem is rather simple. 
\apxonly{By
taking $\min((X)_1,\ldots,(X)_d)$ we can estimate $\sum_{i=1}^n \gamma_i$ with parametric rate ${1\over \sqrt{n}}$
independent of dimension. }
However, while our condition~\eqref{eq:exp_a2} holds, the condition~\eqref{eq:exp_a2star} imposed by~\cite{JN09} does
not. In addition, the natural parameter ranges over a subset of $\mreals^n$, not all of $\mreals^n$ (again in violation
of~\cite{JN09}).
\end{example}

	The above example thus shows that the extension from~\eqref{eq:exp_a2star}  to~\eqref{eq:exp_a2} is not vacuous. Another example
	is the normal scale model $X \sim \matn(0, \sigma^2), \sigma^2 >0$ with $\phi(x)=x^2$. For this
	family, again~\eqref{eq:exp_a2} holds but not~\eqref{eq:exp_a2star}. For larger dimension $d>1$, the family $X \sim
	\matn(0,\Sigma)$ with $\Sigma$ a $d\times d$ positive definite matrix, does not satisfy either~\eqref{eq:exp_a2star}
	or~\eqref{eq:exp_a2}. (However, in this case a partial remedy is possible -- see~\cite[Section 3.4.1]{JNbook}.)

	\apxonly{Another idea was to check $\omega \sim \matn(0, \mu)$, where $\mu$ is a covariance matrix,
	$X=(X_{i,j})$ with $X_{i,j} = \omega_i \omega_j$. Turns out this does not satisfy~\eqref{eq:exp_a2}. For
	example, for  case of $\dim = 2$ the functional $\sqrt{\Var_{P_{\mu}}[X_{1,2}]}$ is
		$$ \sqrt{\mu_{1,1} \mu_{2,2} + \mu_{1,2}^2}\,,$$
		which, unfortunately is not concave (check on the line of $\mu_{1,1}=\mu_{2,2}=1$). 
		\textbf{TODO:} Try to use some geodesic convexity tricks in PSD spaces?}

We point out, however, an important case of where our methods fail, but methods of~\cite{JN09,JNbook} succeed. Namely,
in~\cite[Section 3.1.4]{JNbook} it is shown that (in the notation of Theorem~\ref{th:exp}) the minimax risk of
estimating a linear (in $\mu=\mu_f(\gamma)$) functional satisfies~\eqref{eq:rne_exp} also when $M_0$ is a finite union
of convex sets (and in fact, the linear functional $T(\gamma)$ is allowed to be different on different convex sets). 
Unfortunatelly, this elegant result does not extend to the quadratic risk as the following example demonstrates.

\begin{example} Consider the goal of estimating the bias $p$ of $X_i\simiid \Bern(p)$ where $p\in\{1/4,1/3\}$.
 Then the modulus of continuity $\omega_H(t) = 0$ for $t<H(\Bern(1/4),\Bern(1/3))$, but this does
not contradict~\eqref{eq:rne_exp} since $R_{n,\epsilon}^* = 0$ for all sufficiently large $n$. At the same time, it is
clear that quadratic risk $R_n^* \ge c_1 e^{-c_2 n}$ for some constants $c_1,c_2>0$. Consequently, in the setting when
$M_0$ is a union of convex sets, characterization $R_n^* \asymp \omega_H(1/\sqrt{n})$ is not possible.
\end{example}

%
%
%
%
%
%
%
%

\section{Additional proofs}
\label{seca:pf}



\subsection{Proof of \prettyref{thm:de}}
	\label{seca:pf-de}
\begin{proof}
Clearly the sufficient statistic is the histogram of the observed colors, that is, $\{N_i: i \in [n]\}$, where $N_i$ is the number of observed balls of the $i$th color. Thus we have $N_i\inddistr \Binom(\theta_i,p)$.
Therefore, the setting of \prettyref{thm:de} is a particularization of the general \prettyref{th:highdim}, with $\Theta=\calX=\integers_+$, 
$P_\theta=\Binom(\theta,p)$, $c(\theta)=\theta$, $\Pi=\{\pi\in\calP(\integers_+):  \int \theta\pi(d\theta)\leq 1 \}$ (which is weakly compact), 
and $h(\theta) = \indc{\theta\geq 1}$ so that $T(\pi) = 1-\pi_0$, where 
we identify $\pi$ with its PMF $\pi_k \equiv \pi(\{k\})$.
Furthermore, the assumptions of \prettyref{th:highdim} are fulfilled (with $K_V \leq \frac{1}{4}$ and $\theta_0=0$).
Applying \prettyref{th:highdim}, it remains to characterize the behavior of $\delchi(t)$.
Note that $\delchi(t)$ is closely related to $\delchi(t,n)$ previously studied for the population recovery problem in \prettyref{sec:poprec} (with $\epsilon=1-p$). 
Both dealing with the binomial model, the only difference is the additional moment constraint in $\delchi$ and the difference in the domain ($\integers_+$ versus $\{0,\ldots,n\}$).
Indeed, we have
\begin{align}
\delchi(t) 
&= \sup\{\pi_0 - \pi'_0:  \chi^2(\pi P \| \pi' P)\le t^2, \pi, \pi'\in\Pi\} \nonumber \\
&\leq \sup\{\pi_0 - \pi'_0:  \chi^2(\pi P \| \pi' P)\le t^2, \pi, \pi'\in \calP(\integers_+)\} \triangleq \delchi'(t) \nonumber \\
&\leq \sup\{\pi_0 - \pi'_0:  \TV(\pi P, \pi' P)\le t, \pi, \pi'\in \calP(\integers_+)\} \triangleq \delTV'(t) \label{eq:de-tv1} \\
&\leq t^{\min(1, {p\over1-p})}, \label{eq:de-tv}
\end{align}
where the last inequality follows from \prettyref{lmm:horo} (in particular \prettyref{eq:horo-tv} for $d=\infty$). 
Substituting $t=1/\sqrt{n}$, this completes the proof of the upper bound $\sqrt{R^*(n)} \leq n^{-\frac{1}{2} \min(1, {p\over1-p})} \triangleq \epsilon_n$ as in \prettyref{eq:de1} and \prettyref{eq:de2}.

For the constructive part, consider an estimator of the form \prettyref{eq:linear-estimator}, namely $\hat T = {1\over n} \sum_{i=1}^n g(N_i)$. 
Choose $g$ to be the solution $g^*$ to the following LP (below $h=(0,1,\ldots,1)$):
\begin{equation}
\min_{g\in\reals^{n+1}} \|Pg - h\|_\infty + \frac{1}{\sqrt{n}} \|g\|_\infty,
\label{eq:LP-de}
\end{equation}
which is equal to the dual LP
\[
\max_{\Delta\in\reals^{n+1}} \{\Iprod{\Delta}{h}:  \|\Delta P\|_1 \le t, \|\Delta\|_1 \leq 1\}.
\]
By \cite[Lemma 7]{PSW17-colt}, this LP is upper bounded by twice the value of the \prettyref{eq:de-tv1} with $t=1/\sqrt{n}$, which shows the choice of $g^*$ achieves the quadratic  risk $4 \epsilon_n^2$. The LP \prettyref{eq:LP-de} (with $O(n)$ variables and $O(n)$ constraints) can be solved in time that is polynomial in $n$.

To finish the proof of the upper bound, we show that we can impose the constraint that $g(0)=0$ and still achieve the upper bounds \prettyref{eq:de1}--\prettyref{eq:de2} within a constant factor. Indeed, from \prettyref{eq:LP-de} we conclude that for all $\theta =0,\ldots,n$, $|(Pg^*)(\theta)-h(\theta)| = |\Expect_{N \sim \Binom(\theta,p)} [g^*(N)] - h(\theta)| \leq 2 \epsilon_n$. Particularizing to $\theta=0$, we have $|g^*(0)| \leq 2 \epsilon_n$.  
Consider the modified estimator $\tilde g$ given by $\tilde g(0)=0$ and $\tilde g(j)=g^*(j)$ for all $j \geq 1$.
We have $\|\tilde g\|_\infty\leq \|g^*\|_\infty$ and $\|P \tilde g-h\|_\infty\leq \|P g^*-h\|_\infty + \|P (\tilde g-g)\|_\infty \leq  \|P g^*-h\|_\infty + 2 \epsilon_n$. This shows $\tilde g$ achieve a quadratic risk of at most $16 \epsilon_n^2$. 


Next we proceed to the lower bound. The parametric lower bound in \prettyref{eq:de1} follows from
\prettyref{rmk:lb-parametric}. To complete the proof of \prettyref{eq:de2}, it remains to show the lower bound: for any
$p \leq \frac{1}{2}$ and all $t\le t_0(p)$ we have
\begin{equation}
\delchi(t) \geq c t^{\frac{p}{1-p}}  (\log t)^{-2}
\label{eq:de-chi}
\end{equation}
for some constant $c=c(p)>0$. To this end, we demonstrate a pair of feasible $\tilde \pi,\tilde \pi' \in \Pi$ by modifying the construction in the proof of \cite[Lemma 12]{PSW17-colt} to satisfy the additional moment constraints.
	Therein,\footnote{Original version of~\cite{PSW17-colt}, as published in the proceedings, contained an error in
	this derivation, see \textit{arXiv:1702.05574v3} for correction.} it was shown that there exist probability distributions $\pi,\pi'$ on $\integers_+$, such that
	$|\pi(0) - \pi'(0)|\ge \delta$ and
\begin{equation}
	H^2(\pi P, \pi' P) \le 4 \pth{e^2 \delta_1 \log \frac{1}{\delta_1} }^{\frac{2(1-p)}{p}}\,,
\end{equation}
whenever $\delta_1 = {\delta \over p} $ satisfies $\delta_1 < e^{-1}$. 
More precisely, $\pi$ and $\pi'$ are obtained as follows: 
Let $\alpha=1-\frac{p}{\log \frac{1}{\delta_1}}$, $\beta  = \delta_1\log \frac{1}{\delta_1}$.
Define $g:\complex\to\complex$ by $g(z) = \beta^{\frac{1+z}{1-z}}$.
Set
$$ f(z) = (1-\alpha) g(\alpha z) - (1-\alpha) g(\alpha)\,.$$
Define a sequence $\{\Delta_k: k\in\integers_+\}$ via the coefficients of the Taylor expansion of $f$, i.e., $\Delta_k \eqdef [z^k] f(z)$.
Then $\Delta_k = (1-\alpha) \alpha^k [z^k] g(z)$ for $k\geq 1$.
Define the following geometric distribution $\mu$ on $\mathbb{Z}_+$ by $ \mu_k \eqdef \bar \alpha \alpha^k $.
Define now $\pi$ and $\pi'$ via
$$ \pi_k \eqdef \mu_k + \Delta_k\,, \quad \pi'_k \eqdef \mu_k - \Delta_k\,.$$
As shown in~\cite[Lemma 12]{PSW17-colt} we have $\pi_0 - \pi'_0 = 2\Delta_0 \ge \delta$.

Now we estimate the mean of $\pi,\pi'$. 
Note that the mean of the geometric distribution $\mu$ is 
$\sum_{k\geq 0} k \mu_k = \frac{1}{1-\alpha}$.
Furthermore, since the generating function of $\Delta$ is $f$, 
using the facts that $f'(z) = \alpha (1-\alpha) g'(\alpha z)$ and $g'(z) = \frac{2 \log \beta}{(1-z)^2}  \beta ^{\frac{1+z}{1-z}}$,
we have 
\[
\sum_{k\geq 0} k \Delta_k = f'(1) = \alpha (1-\alpha) g'(\alpha) = \frac{2 \alpha \log \beta}{1-\alpha}  \beta ^{\frac{1+\alpha}{1-\alpha}}
\]
Since $\delta_1 \le e^{-1}$ we have $\delta_1 \le \beta \le e^{-1}$ and $\bar\alpha < 1/2$, implying that 
$$ \left|\frac{2 \alpha \log \beta}{1-\alpha}  \beta ^{\frac{1+\alpha}{1-\alpha}}\right | \le {2\over \bar
\alpha} e^{1-{2\over \bar \alpha}} \log {1\over \delta_1} \,.$$
Since $xe^{-x}\le e^{-1}$ on $x\ge 1$ we conclude
$$ \left|\sum_{k\ge 0} \Delta_k\right| \le\log {1\over \delta_1}  $$
and therefore, the first moments of $\pi,\pi'$ are both bounded by $1/\eta$, where we set $\eta \eqdef {\bar \alpha\over
p+1} \le 1/2$.
Finally, define 
\[
\tilde \pi = (1-\eta) \delta_0+ \eta \pi , \quad 
\tilde \pi' = (1-\eta)\delta_0+ \eta  \pi',
\]
From previous estimates we have $|\sum_k k \tilde \pi_k | \leq 1$ and $|\sum_k k \tilde \pi'_k | \leq 1$ whenever
$\delta < {p\over e}$. 
 By convexity, we have $H^2(\tilde\pi P, \tilde \pi' P) \leq \eta H^2(\pi P, \pi' P) \le \tfrac{1}{2} H^2(\pi P, \pi'
 P)$.
In summary, we have constructed $\tilde \pi,\tilde \pi' \in \Pi$ such that 
$|\tilde \pi(0)-\tilde \pi'(0)| \geq \eta \delta = \tfrac{1}{2}\delta \bar \alpha$
and 
$$
H^2(\tilde\pi P, \tilde \pi' P) \leq
{C\over 2} \pth{e^2 \delta_1 \log \frac{1}{\delta_1} }^{\frac{2(1-p)}{p}}.
$$
Finally, choosing $\delta_1$ so that the RHS of the previous display is $t^2$, 
i.e.,
$\delta_1 = \Theta(t^{\frac{p}{1-p}} / \log \frac{1}{t})$, we 
	have $|\tilde \pi(0)-\tilde \pi'(0)| \geq \Omega((t)^{\frac{p}{1-p}} / (\log \frac{1}{t})^2)$.
	This completes the proof of \prettyref{eq:de-chi} and the theorem.
\end{proof}

\subsection{Proof of \prettyref{thm:species}}
	\label{seca:pf-species}
We first present a key lemma, the proof of which requires delicate complex analysis and is postponed till the end of this subsection.

\begin{lemma}
\label{lmm:restricted}
Consider the Poisson kernel $P(\cdot|\theta) = \Poi(\theta)$. For $s,t>0$, define
\begin{equation}
\delta(s,t) \triangleq \sup_{\Delta}\sth{ \int e^{-s\theta} \Delta(d \theta): \|\Delta P\|_{\TV} \le t, \|\Delta\|_{\TV} \le 1}.
\label{eq:deltast}
\end{equation}
where the supremum is taken over all finite signed measure $\Delta$ on $\reals_+$. Then for any $s>0$ and $0 \leq t \leq 1$,
\begin{equation}
\delta(s,t) \leq t^{\min\{1,\frac{2}{s}\}}.
\label{eq:rs_ach}
\end{equation}

Furthermore, fix $s\ge 2$ and consider $\delchi(t)$ in \prettyref{eq:delchi2_def} with $\Theta = \mathbb{R}_+, \matx = \mathbb{Z}_+$,
$P(\cdot|\theta) = \Poi(\theta)$, $\Pi = \{\pi\in\calP(\reals_+): \int \theta \pi(d\theta) \leq 1\}$ and $T(\pi) = \int e^{-s \theta} \pi(d\theta)$. 
 There exist positive constants $c=c(s), t_1=t(s)$ such that for all $t \le t_1$,
\begin{equation}\label{eq:rs_conv}
	c t^{\frac{2}{s}} \log^{-2} {1\over t} \leq \delchi(t) \leq 2 t^{\frac{2}{s}} .
\end{equation}		
\end{lemma}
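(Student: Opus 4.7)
The plan is to treat Part~1 by a complex-analytic argument, deduce the Part~2 upper bound from it by an easy reduction, and then construct near-extremal priors for the Part~2 lower bound. Throughout I would work with the analytic function
\[
\varphi(w) \eqdef \int e^{-w\theta}\Delta(d\theta), \qquad \Re w \ge 0,
\]
associated to any signed measure $\Delta$ on $\reals_+$.

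For Part~1 there are two complementary bounds on $\varphi$. From the integral representation, $|\varphi(w)| \le \|\Delta\|_{\TV}\le 1$ on the closed right half-plane. Using the Poissonization identity $e^{-w\theta} = \Expect_{N\sim\Poi(\theta)}[(1-w)^N]$, one also has the series representation $\varphi(w) = \sum_{k\ge 0}(1-w)^k (\Delta P)(k)$, so $|\varphi(w)|\le \|\Delta P\|_{\TV}\le t$ on the closed disc $|w-1|\le 1$. If $s\in[0,2]$ the point $w=s$ lies in this disc, giving $\delta(s,t)\le t$. If $s>2$ I would apply the conformal map $\zeta = \tfrac{1}{2}+\tfrac{1}{2w}$, which sends the right half-plane minus the disc $|w-1|\le 1$ onto the vertical strip $\{\tfrac{1}{2}<\Re\zeta<\tfrac{3}{4}\}$, maps the imaginary $w$-axis to $\Re\zeta = \tfrac{1}{2}$, the circle $|w-1|=1$ to $\Re\zeta = \tfrac{3}{4}$, and sends the real point $w=s$ to $\zeta_0 = \tfrac{1}{2}+\tfrac{1}{2s}$. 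Hadamard's three-lines theorem on this strip then yields
\[
|\varphi(s)| \le 1^{1-2/s}\cdot t^{2/s} = t^{2/s},
\]
which is \eqref{eq:rs_ach}. A routine approximation reduces to $\Delta$ of compact support so that $\varphi$ is well defined and bounded on $\Re w \ge 0$.

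For the upper bound in Part~2, \prettyref{prop:deltaproperty} gives $\delchi(t)\le \delta_{\TV}(t)$; for any $\pi,\pi'\in\Pi$ the signed measure $\Delta = \pi'-\pi$ has $\|\Delta\|_{\TV}\le 2$ and $\|\Delta P\|_{\TV}\le t$, so rescaling $\Delta\to\Delta/2$ and applying \eqref{eq:rs_ach} yields $\delta_{\TV}(t)\le 2(t/2)^{2/s}\le 2t^{2/s}$ for $s\ge 2$, which is the right-hand side of \eqref{eq:rs_conv}.

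The main obstacle is the lower bound in Part~2: constructing an explicit pair $\pi,\pi'\in\Pi$ (probability measures with first moment at most $1$) with $\chi^2(\pi P\|\pi'P)\le t^2$ and $T(\pi)-T(\pi')\gtrsim t^{2/s}\log^{-2}(1/t)$. I would follow the template of \cite[Lemma 12]{PSW17-colt} used in the proof of \prettyref{thm:de}: the three-lines extremum corresponds, under the conformal map above, to $\varphi_*(w)=\exp(-a/w)$ with $a = 2\log(1/t)$, so the goal is to produce a signed measure whose Laplace transform approximates $\varphi_*$ on $[0,\infty)$. A natural candidate is obtained by pulling the strip extremal back through the conformal map, expanding in a power series, truncating, and re-expressing the truncated series as a linear combination of shifted exponential densities on $\reals_+$; decomposing into $\Delta_+-\Delta_-$ and mixing with a base exponential distribution to regularise the first moment yields valid $\pi,\pi'\in\Pi$. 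Two successive truncations will be needed---one to keep $\|\Delta P\|_{\TV}=O(t)$ and one to enforce the moment constraint $\int\theta\,\pi(d\theta),\int\theta\,\pi'(d\theta)\le 1$---and each costs a $\log(1/t)$ factor, producing the $\log^{-2}(1/t)$ loss in \eqref{eq:rs_conv}. The delicate step is the Cauchy-type estimate on the Taylor coefficients of the pulled-back extremum that keeps both truncations compatible, carried out in direct analogy with the computation in \cite[Lemma 12]{PSW17-colt}.
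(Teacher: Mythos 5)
Your Part 1 and Part 2 upper bound are essentially the paper's proof. The observation that $e^{-w\theta}=\Expect_{N\sim\Poi(\theta)}[(1-w)^N]$ (equivalently, that the $z$-transform of $\Delta P$ equals the Laplace transform of $\Delta$ shifted by $1$) is exactly the paper's key identity \eqref{eq:rs_A}; your conformal map $\zeta = \tfrac12 + \tfrac1{2w}$ and the paper's $w = 1+s/z$ send the same region to the same strip, and the three-lines argument is identical. Relaxing from Laplace transforms of signed measures to all $H^\infty$ functions on the region is the paper's $\delta_{H^\infty}(t)$ relaxation, and the $\delchi(t)\le 2t^{2/s}$ reduction in Part~2 matches \eqref{eq:rs1}--\eqref{eq:rs3}.

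The Part 2 lower bound is where the proposal is both a genuinely different plan and, as written, has a gap. The paper does \emph{not} truncate a power series: it regularizes the strip extremal $f_*(z)=e^{c_t/z}$ by (i) moving the essential singularity from $0$ to $\alpha>0$ (so the singularity is strictly inside $\{\Re>0\}$) and (ii) multiplying by $(z-1)^{-2}$ to make the transform Fourier-integrable, then \emph{defines} $G_\alpha$ by Fourier inversion and uses the Paley--Wiener theorem to prove $\supp G_\alpha\subset\reals_+$. The single factor $\xi=\alpha^2/16 \asymp \log^{-2}(1/t)$ supplies the entire $\log^{-2}$ loss, while contour-shifting gives the pointwise bound $|G_\alpha(a)|\le e^{-a\alpha/2}$ (needed for positivity of $\pi'$) and a Cauchy estimate on the $z$-transform of $G_\alpha P$ gives the $\chi^2$ bound. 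Your sketch instead proposes to expand the extremal in a power series, truncate, and convert to ``shifted exponential densities,'' ``in direct analogy with \cite[Lemma 12]{PSW17-colt}.'' There are two concrete obstructions you would have to resolve: first, the formal inverse Laplace transform of $e^{-a/w}$ is a Bessel-type kernel $\sim -\sqrt{a/\theta}\,J_1(2\sqrt{a\theta})$ with \emph{infinite} total variation, so converting the Laurent series $\sum_k (-a)^k/(k!\,w^k)$ into gamma densities and then truncating does not obviously keep $\|\Delta\|_{\TV}$ bounded by $1$ --- this bookkeeping is precisely the heart of the construction and is not carried out in the sketch; second, \cite[Lemma 12]{PSW17-colt} is a $z$-transform/unit-disk argument tailored to the binomial kernel on $\integers_+$, whereas here $\Delta$ lives on $\reals_+$ under the Poisson kernel, so some substitute for Paley--Wiener is needed to certify that the perturbation is supported on $\reals_+$ --- a step you never mention. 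Your attribution of the two $\log$ factors to ``two successive truncations'' also differs from the paper's single multiplier $\xi\asymp\alpha^2$, which is not by itself a mistake, but it signals that the proposed route has not been thought through to the point where one could tell whether it yields the stated exponent. As a plan it is plausible in spirit, but as a proof it is missing the analytic content that makes the lemma nontrivial.
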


Before proving \prettyref{thm:species}, we note that the species problem does not completely fall within the purview of
\prettyref{th:highdim}, because the number of distinct species can be infinite.
However, if the total number of species is restricted to $O(n)$, then the minimax rate readily follows from the general
\prettyref{th:highdim} coupled with the characterization of the modulus of continuity in \prettyref{eq:rs_conv},
cf.~\eqref{eq:res_1}-\eqref{eq:res_2} below.
To deal with the full species problem without restriction, some extra argument is needed, which involves the auxiliary
LP \prettyref{eq:deltast} and introduces extra logarithmic factors in the upper bound of \prettyref{eq:species-risk2}.

\begin{proof}
The result \prettyref{eq:species-risk1} for $r\leq 1$ simply follows from using Good-Toulmin's unbiased estimator and a parametric lower bound (cf.~\cite{GT56,OSW15}). Next we focus on proving \prettyref{eq:species-risk2} for $r>1$.

\paragraph{Lower bound.}
We begin with some easy reductions.
By \prettyref{eq:U}, $U = \sum_x \indc{N_x = 0} - V$, where $V \triangleq \sum_x \indc{N_x = 0, N_x' = 0}$, and hence
estimating $U$ and $V$ are equivalent. 
Next, since $V$ is concentrated near its mean, estimating $V$ and $\Expect[V]$ are essentially equivalent. Indeed, by \prettyref{eq:U} and independence, we have
\[
\Var(U) = \sum_x \Var(\indc{N_x = 0, N_x' > 0}) \leq \Expect[U] \leq r n.
\]
Therefore for any estimator $\hat V$,
\begin{equation}
\Expect[(\hat V-V)^2] \geq \frac{1}{2}\Expect[(\hat V-\Expect[V])^2] - \frac{1}{2} \Var(V) \geq \frac{1}{2}\Expect[(\hat V-\Expect[V])^2] - r n.
\label{eq:EV}
\end{equation}
Define $\theta_x = n p_x$ and $h(\theta)=e^{-(r+1)\theta}$. Then $\Expect[V] = \sum_x h(\theta_x)$.

In order to apply the general result of \prettyref{th:highdim}, we introduce a restricted version of the species problem, where the number of distinct species is at most $n$. Thus any lower bound for the restricted species problem also holds for the original species problem.
Denote the parameters by $\btheta = (\theta_1,\cdots,\theta_n) \in \bTheta_c \triangleq \{\theta\in \reals^n_+: \sum_{i=1}^n \theta_i \leq n\}$.
Let the optimal risk for the restrictive problem be defined as usual:
\begin{equation}\label{eq:res_1}
	\mate^{(res)}_n(r) \eqdef \inf_{\hat V} \sup_{\btheta \in \bTheta_c} \frac{1}{n^2}\Expect[(\hat V-\Expect[V])^2]\,.
\end{equation}
Applying \prettyref{th:highdim} with $c(\theta)=\theta$, $P=\Poi(\cdot)$, 
$\Pi=\{\pi\in\calP(\reals_+): \int \theta\pi(d\theta)\leq 1 \}$ (which is weakly compact), 
and $h(\theta)=e^{-(r+1)\theta}$ (which is bounded), we obtain
\begin{equation}\label{eq:res_2}
\delchi\left({1\over \sqrt{n}}\right)^2 \ge \mate^{(res)}_n(r) \geq c\pth{\delchi\left({1\over \sqrt{n}}\right)^2 - \frac{1}{n} },
\end{equation}
for some absolute constant $c$.
Applying \prettyref{eq:rs_conv} in 
\prettyref{lmm:restricted} with $t=\frac{1}{\sqrt{n}}$ and $s=r+1$, we obtained the lower bound~$\delchi(\frac{1}{\sqrt{n}})\gtrsim 
n^{-\frac{1}{r+1}} \log^{-2}(n)$. The desired lower bound in \prettyref{eq:species-risk2} then follows from
$\mate^{(res)}_n(r) \le \mate_n(r)$, \prettyref{eq:EV}, and \prettyref{eq:res_2}.

\paragraph{Upper bound.}

We start with the construction of the estimator.
Let $n_0 = \frac{n}{\log n}$ and $n_1=n+n_0$.
For notational convenience, we work with Poisson sampling parameter $n_1=n(1+\frac{1}{\log n})$ in place of $n$.
Thus given observations $\{N_x\} \inddistr \Poi(n_1 p_x)$, the goal is to estimate $U = \sum_x \indc{N_x = 0, N_x' > 0}$ in \prettyref{eq:U}, 
the number of unseen symbols that would be present in the next $rn_1$ observations, where $N_x' \sim \Poi(r n_1 p_x)$
By Poisson splitting, we have access to two independent sets of Poisson observations 
$\{\tilde N_x\} \inddistr \Poi(\lambda_x)$ and $\{\tilde N_x'\} \inddistr \Poi(\lambda_x')$, where $\lambda_x \triangleq n p_x$, $\lambda_x' \triangleq n_0 p_x = \frac{\lambda_x}{\log n}$ and $\tilde N_x+\tilde N_x'=N_x$.
Let $\tilde U \triangleq \sum_x \indc{\tilde N_x = 0} - \indc{N_x=0, N_x' = 0}$. 
Since $U = \tilde U - \sum_x \indc{\tilde N_x = 0, \tilde N_x' > 0}$, where the last sum is observed, thus estimating $U$ is equivalent to estimating $\tilde U$.

To this end, fix a bounded sequence $f: \integers_+ \to \reals$ to be optimized later. 
Fix a large constant $C_0$ and set a threshold $b' =C_0 \log n$.
Consider an estimator of the following form
\begin{equation}
\hat U = \sum_x \hat T_x
\label{eq:species-ach1}
\end{equation}
where 
\begin{equation}
\hat T_x = \begin{cases}
0  &  \tilde N_x' \geq b'  \\
f(\tilde N_x)  &  \tilde N_x' < b'.  \\
\end{cases}
\label{eq:species-ach2}
\end{equation}
Define
\begin{equation}
h(\lambda) \triangleq e^{-\lambda}-e^{-(1+\gamma) \lambda}, 
\quad \gamma \triangleq (1+r) \pth{1+\frac{n_0}{n}} -1.
\label{eq:gamma}
\end{equation}
Note that
\[
\Expect[\tilde U] = 
\sum_x (e^{-n p_x} - e^{- (1+r)(n+n_0) p_x}) =  \sum_x h(\lambda_x).
\]
Then
\[
\Expect[(\hat U-\tilde U)^2]=
\pth{\sum_x (\Expect[\hat T_x]-h(\lambda_x))}^2 + \Var(\hat U-\tilde U).
\]
A simple calculation shows that (cf.~\cite[Lemma 3]{OSW15})
\begin{equation}
\Var(\hat U-\tilde U) \leq n (\|f\|_\infty^2 + \gamma).
\label{eq:var0}
\end{equation}

To bound the bias, 
let $\epsilon=\frac{n_0}{n}=\frac{1}{\log n}$ and note that 
$\lambda_x'=\epsilon \lambda_x$. Set $b = b'/\epsilon = C_0 \log^2 n$.
Using the definition of $\hat T_x$ and the independence of $\{\tilde N_x\}$ and $\{\tilde N_x'\}$, we have:
\begin{align*}
 & ~ |\Expect[\hat T_x-h(\lambda_x)]|\\
= & ~ \left|\Expect\qth{(\hat T_x-h(\lambda_x)) 
\pth{\indc{\tilde N_x' \geq b', \lambda_x' \geq \frac{b'  }{2}}+\indc{\tilde N_x' \geq b' , \lambda_x' \leq \frac{b'  }{2}}+ \indc{\tilde N_x' \leq b' , \lambda_x' \leq 2b'  }+\indc{\tilde N_x' \leq b', \lambda_x' \geq 2b'}}} \right|  \\
\leq & ~ h(\lambda_x) \indc{\lambda_x' \geq \frac{b'}{2}} + h(\lambda_x) \pprob{\tilde N_x' \geq b'} \indc{\lambda_x' \leq \frac{b'  }{2}} \\
& ~ + |\eexpect{f(\tilde N_x)} - h(\lambda_x)| \indc{\lambda_x' \leq 2b' } + (\|h\|_\infty+1)\pprob{\tilde N_x' \leq b' } \indc{\lambda_x' \geq 2b'  }  \\
\leq & ~ \underbrace{h(\lambda_x) \indc{\lambda_x \geq \frac{b  }{2}}}_{\I} + \underbrace{h(\lambda_x) \exp(-b  \kappa) \indc{\lambda_x \leq \frac{b  }{2}}}_{\II} \\
& ~ + \underbrace{|\eexpect{f(\tilde N_x)} - h(\lambda_x)| \indc{\lambda_x \leq 2b  }}_{\III} + \underbrace{(\|f\|_\infty+1) \exp(-b  \kappa) \indc{\lambda_x \geq 2b  } }_{\IV}, 
\end{align*}
where we used the Chernoff bound for Poisson distributions \cite[Theorem 4.4]{MU06}:
for any $\lambda>0$, $\prob{\Poi(\lambda/2)\geq \lambda} \leq \exp(-\kappa \lambda)$ and $\prob{\Poi(2\lambda)\leq \lambda} \leq \exp(-\kappa \lambda)$, with $\kappa \triangleq \log 2-\frac{1}{2}$.
Note that 
\begin{equation}
\sum _x \lambda_x = n.
\label{eq:lambdanormalize}
\end{equation}
 So
\[
\sum_x \I  \leq \sum_x e^{-\lambda_x}(1-e^{-r\lambda_x}) \indc{\lambda_x \geq \frac{b }{2}} 
\leq \sum_x e^{-\lambda_x} \gamma \lambda_x \indc{\lambda_x \geq \frac{b }{2}} 
\leq \gamma n n^{-\frac{C_0}{2}}.
\]
and
\[
\sum_x \II  \leq \sum_x e^{-\lambda_x}(1-e^{-r\lambda_x}) \exp(-b  \kappa) \leq \gamma n n^{-C_0  \kappa},
\]
and
\[
\sum_x \IV  \leq (\|f\|_\infty+1) n^{-C_0  \kappa} \frac{n}{2b }.
\]
By choosing $C_0$ to be large constant, we have 
\[
\sum_x \I+\II+\IV \leq  \gamma n^{-10} (\|h\|_\infty+1).
\]

Next to bound the main term \III, we choose the coefficient $f$ by solving an LP, which is directly related to the LP \prettyref{eq:deltast} in \prettyref{lmm:restricted}.
Let $f(k)=k g(k-1)$, where $g:\integers_+\to\reals$ is some sequence to be optimized later.
Then by Stein's identity for Poisson distributions, we have $\eexpect{f(\tilde N_x)} = \lambda_x \eexpect{g(\tilde N_x)}$. 
Put
\[
S(\lambda) \triangleq \frac{h(\lambda)}{\lambda} = \frac{e^{-\lambda} -e^{-(\gamma+1)\lambda}}{\lambda}.
\]
Then we have $\eexpect{f(\tilde N_x)} - h(\lambda_x) = \lambda_x (\eexpect{f(\tilde N_x)} - S(\lambda_x))$.
Recall that the Poisson kernel $P$ acts as follows:
\begin{itemize}
	\item For any sequence $g:\integers_+\to\reals$, $Pg: \reals_+\to\reals$ is a function defined via $(Pg)(\lambda) \triangleq \Expect[g(\Poi(\lambda))]$;
\item For any distribution $\pi$ on $\reals_+$, $\pi P$ denotes the Poisson mixture whose probability mass function is given by $(\pi P)(k) = \int e^{-\lambda} \frac{\lambda^k}{k!} \pi(d\lambda), k \geq 0$.
\end{itemize}
For any $t >0 $, define the following bias-variance tradeoff LP:
\begin{equation}
\delta(t) \triangleq \inf_g  \|S-Pg \|_{L_\infty(\reals_+)} + t \|g\|_{\ell_\infty(\integers_+)}.
\label{eq:species-bvLP}
\end{equation}
Next we bound $\delta(t)$ by the dual LP:
\begin{align}	
\delta(t) 	\stepa{=} & ~ \inf_{g \in \ell_\infty(\integers_+)} \sup_{\|\Delta\|_{\TV} \leq 1, \|\nu\|_{\TV} \leq 1}  \int (S-Pg) d\Delta + t \int g d\nu \nonumber \\
	\stepb{=} & ~ \sup_{\|\Delta\|_{\TV} \leq 1, \|\nu\|_{\TV} \leq 1} \inf_{g \in\ell_\infty(\integers_+)} \int (S-Pg) d\Delta + t \int g d\nu \nonumber \\
	\stepc{=} & ~ \sup_{\|\Delta\|_{\TV} \leq 1, \|\nu\|_{\TV} \leq 1} \inf_{g \in \ell_\infty(\integers_+)} \int S d\Delta + \int g d(t \nu - \Delta P) \nonumber \\
	\stepd{=} & ~ \sup_{\Delta} \sth{\int S d\Delta: \|\Delta\|_{\TV} \leq 1, \|\Delta P\|_{\TV} \leq t}, \label{eq:deltat-dual}
	\end{align}
	where in (a) $\Delta$ and $\nu$ are finite signed measures on $\reals_+$ and $\integers_+$, respectively;
	(b) follows from Ky Fan's minimax theorem (\prettyref{thm:minimax}), since 
$\{\Delta: \|\Delta\|_{\TV}\leq 1\}$ and $\{\nu: \|\nu\|_{\TV}\leq 1\}$ are compact in their respective weak topology,  and
for every bounded $g$, $\nu \mapsto \int g d\nu$ and 
$\Delta \mapsto \int (S-Pg) d\Delta$ are both weakly continuous since both $S$ and $Pg$ are bounded;
	(c) follows from Fubini's theorem: $\int Pg d\Delta = \int g  d(\Delta P)$;
	(d) is because
\[
\inf_{g \in \ell_\infty(\integers_+)} \int S d\Delta + \int g d(t \nu - \Delta P) = 
\begin{cases}
- \infty  & t \nu \neq \Delta P \\
 0 & t \nu = \Delta P
\end{cases}
\]
To relate the LP \prettyref{eq:deltat-dual} to the LP \prettyref{eq:deltast} considered in \prettyref{lmm:restricted}, the key observation is the following integral representation:
\[
S(\lambda) = \int_1^{\gamma+1} e^{-\lambda s} ds.
\]
Interchanging the integral with the supremum in \prettyref{eq:deltat-dual}, we obtain the following upper bound
\begin{equation}
\delta(t) \leq \int_1^{\gamma+1} \delta(s,t) ds
\label{eq:deltadelta}
\end{equation}
where $\delta(s,t)$ is defined in \prettyref{eq:deltast}.
In view of \prettyref{eq:rs_ach} and \prettyref{eq:deltadelta}, we have
\begin{equation}
\delta(t) \leq \gamma t^{\frac{2\gamma}{1+\gamma}}.
\label{eq:species-deltat}
\end{equation}
Thus, for the specific value of $t = \frac{1}{\sqrt{n}}$, there exists $g^*: \integers_+\to\reals$, such that
\begin{equation}
\sup_{\lambda \geq 0} |\Expect[g^*(\Poi(\lambda))] - S(\lambda)|  \leq \gamma n^{-\frac{1}{1+\gamma}}, \qquad 
\|g^*\|_{\infty}
\leq \gamma n^{\frac{1}{2}-\frac{1}{1+\gamma}}.
\label{eq:gstar}
\end{equation}

Next, we truncate $g^*$. 
Set 
$\lambda_0=2 b $ and 
$L = 2 \lambda_0 = 4 C_0 \log^2 n$ and define $g$ by 
\begin{equation}
g(k) =  g^*(k) \indc{k \leq L}.
\label{eq:gg}
\end{equation}
%
Since  $f(k)=k g(k-1)$, we have $\|f\|_\infty \leq  L \|g^*\|_\infty$. In view of \prettyref{eq:var0} and \prettyref{eq:gstar}, we have the variance bound
\[
\var(\hat U-U) \leq 4 \gamma L^2 n^{\frac{2\gamma}{1+\gamma}} = O(\gamma n^{\frac{2\gamma}{1+\gamma}} \log^4 n).
\]
	Furthermore, truncation incurs a small bias since 
\[
\left|\expect{g^*(\tilde N_x) \indc{\tilde N_x > L}} \right| \leq \|g^*\|_\infty \prob{\tilde N_x > L}.
\]
Note that $\prob{\Poi(\lambda) > L} \leq \lambda \sum_{i\geq L} \frac{\lambda^{i-1}}{(i-1)!} e^{-\lambda} = \lambda \prob{\Poi(\lambda) > L-1}$. Thus
\begin{align}
\sum_x |\Expect[g^*(\tilde N_x) \indc{\tilde N_x > L}] | \indc{\lambda_x \leq \lambda_0}
\leq & ~  n \|g^*\|_\infty  \prob{\Poi(\lambda_0) > 2 \lambda_0-1}  \nonumber \\
\overset{\prettyref{eq:gstar}}{\leq}& ~ 	\gamma n^{\frac{3}{2}-\frac{2\gamma}{1+\gamma}} \exp(- \kappa \lambda_0/2) \leq n^{-5}.
\label{eq:truncation}
\end{align}
Thus
\begin{align}
\sum_x \III 
= & ~ \sum_x |\eexpect{f(\tilde N_x)} - h(\lambda_x)| \indc{\lambda_x \leq \lambda_0}	\nonumber \\
= & ~ \sum_x \lambda_x |\eexpect{g(\tilde N_x)} - S(\lambda_x)| \indc{\lambda_x \leq \lambda_0}	\\
\overset{\prettyref{eq:gg}}{\leq} & ~ \sum_x \lambda_x |\eexpect{g^*(\tilde N_x)} - S(\lambda_x)| \indc{\lambda_x \leq \lambda_0} + \sum_x |\Expect[g^*(\tilde N_x) \indc{\tilde N_x > L}] | \indc{\lambda_x \leq \lambda_0} 	\\
\leq & ~ \gamma n^{\frac{r}{1+\gamma}} + n^{-5},
\end{align}
where the last step follows from \prettyref{eq:lambdanormalize}, \prettyref{eq:gstar} and \prettyref{eq:truncation}.

Putting everything together, we have
\begin{align*}
\Expect[(\hat U-U)^2] 
\leq & ~  \pth{\sum_x \I+\II+\III+\IV }^2 + \var(\hat U-U) \\
= & ~ O(\gamma^2 n^{\frac{2\gamma}{1+\gamma}} \log^4 n)	=  O(n^{\frac{2r}{1+r}} \log^4 n),
\end{align*}
where the last step follows from the definition of $\gamma$ in \prettyref{eq:gamma} and that $\frac{2\gamma}{1+\gamma}= \frac{2r}{1+r} + \frac{2\epsilon}{1+\gamma}$ with $\epsilon = \frac{n_0}{n}=\frac{1}{\log n}$. 
Recall that the above is proved for sample size $n_1=n(1+1/\log n) \asymp n$.
Dividing both sides by $n^2$ yields the upper bound in \prettyref{eq:species-risk2}.

Finally, we address the construction of the estimator and its computational complexity. From the above proof, combining \prettyref{eq:species-ach1}, \prettyref{eq:species-ach2}, \prettyref{eq:species-bvLP}, \prettyref{eq:gg} and \prettyref{eq:truncation}, we see that it suffices to choose an estimator of the following form
\begin{equation}
\hat U = \sum_x \tilde N_x \cdot g^*(\tilde N_x-1) \indc{\tilde N_x < L} \indc{\tilde N_x' < b'}
\label{eq:species-ach}
\end{equation}
where $g^*$ is the solution of the following infinite-dimensional LP:
\begin{equation}
\inf_{g}  \|S-Pg \|_{L_\infty([0,\lambda_0])} + \frac{1}{\sqrt{n}} \|g\|_{\ell_\infty},
\label{eq:species-comp1}
\end{equation}
with $(Pg)(\lambda) = \expects{g(N)\indc{N\leq L}}{N\sim \Poi(\lambda)}$.
Recall that $\lambda_0$, $L$ and $b$ are all $\Theta(\log^2 n)$.
Here the decision variable $g: \{0,\ldots,L\} \to \reals$ is finite-dimensional; however the objective function involves the $L_\infty$-norm and is equivalent to setting a continuum of constraints.
It remains to show that one can find a finite-dimensional LP whose solution is as good as \prettyref{eq:species-comp1}, statistically speaking. We do so by means of discretization.
From \prettyref{eq:gstar} we see that it suffices to consider $\|g\|_\infty \leq \gamma n^{\frac{1}{2}-\frac{1}{1+\gamma}}$.
For some small $\varepsilon$ to be specified, let  $m = \floor{\lambda_0/\varepsilon}$ and $M \triangleq \varepsilon \{1,\ldots,m\}$. 
Consider the following discretized version of \prettyref{eq:species-comp1},
\begin{equation}
\inf_{g}  \|S-Pg \|_{L_\infty(M)} + \frac{1}{\sqrt{n}} \|g\|_{\ell_\infty},
\label{eq:species-comp2}
\end{equation}
To compare \prettyref{eq:species-comp1} and \prettyref{eq:species-comp2}, note that for any $\lambda \in [0,\lambda_0]$, there exists $\lambda' \in M$ such that $|\lambda-\lambda'| \leq \varepsilon$.
Note that
$S(\lambda) = \frac{h(\lambda)}{\lambda} = \frac{e^{-\lambda} -e^{-(\gamma+1)\lambda}}{\lambda}$ is $L$-Lipschitz in $\lambda$ for some $L$ depending only on $r$.
Therefore
$|S(\lambda)-S(\lambda')| \leq L\varepsilon$. Furthermore, since $D(\Poi(\lambda)\|\Poi(\lambda')) = \lambda \log \frac{\lambda}{\lambda'} + \lambda'-\lambda \leq \frac{(\lambda-\lambda')^2}{\lambda'}\leq \varepsilon$, 
by Pinsker's inequality, we have
$|(Pg)(\lambda) -(Pg)(\lambda')| \leq \|g\|_\infty \TV(\Poi(\lambda),\Poi(\lambda')) \leq \gamma \sqrt{n \varepsilon}$.
Choosing $\varepsilon = \frac{1}{n^2}$, we conclude that the value of \prettyref{eq:species-comp1} and \prettyref{eq:species-comp2} only differs by $O(n^{-1/2})$, and solving 
which is an LP with $O(\log^2 n)$ variables and $O(n^2)$ constraints, achieves the upper bound in \prettyref{eq:species-risk2}.
\end{proof}

To close this section, we prove \prettyref{lmm:restricted}. 
The proof relies on two key results from complex analysis: Hadamard's three-lines theorem and the Paley-Wiener theorem.
\begin{proof}
We follow the same program of $H^\infty$-relaxation
 as in the proof of Theorem~\ref{lmm:horo} in \cite{PSW17-colt}. 
For a complex valued function on
$U\subset \complex$ we define $\|f\|_{H^\infty(U)} = \sup_{z\in U} |f(z)|$. If $f$ is holomorphic on a domain $U$ then  $\|f\|_{H^\infty(U)} =\|f\|_{H^\infty(\partial U)}$ by the maximum principle.  The open unit disk is denoted below as $D$
and the unit circle as $\partial D$. To each finite signed measure $\Delta$ on $\mreals_+$
we associate its Laplace transform:
	$$ f_\Delta(z) \eqdef \int_{\mreals_+} e^{az} \Delta(da)\,,$$
which is a holomorphic function on $\{\Re \le 0\}$ and 
\begin{equation}
\|f_{\Delta}\|_{H^\infty(\Re \le 0)} = \|f_{\Delta}\|_{H^\infty(\Re = 0)}  \le \|\Delta\|_{\TV} \triangleq 
\int_{\reals} |\Delta|(da).
\label{eq:DeltaTV}
\end{equation}
 Similarly, to each finite signed measure $\nu$ on $\integers_+$ we associate its $z$-transform
	$$ f_\nu(z) \eqdef \sum_{m \in \integers_+} \nu(m) z^m\,.$$
Again, $f_\nu$ is holomorphic on a $D$ with 
\begin{equation}
\|f_{\nu}\|_{H^\infty(D)} = \|f_{\nu}\|_{H^\infty(\partial D)} \le \|\nu\|_{\TV} \triangleq \sum_{m \in\integers_+} |\nu(m)|.
\label{eq:nuTV}
\end{equation}
 Furthermore, if $f_\nu$
happens to be holomorphic on $rD$ for $r>1$, then we have from Cauchy integral formula
		\begin{equation}
		 |\nu(m)| \le r^{-m} \|f\|_{H^\infty(rD)}
		\label{eq:cauchy}
		\end{equation}
The important observation for this proof is the following identity: 
	\begin{equation}\label{eq:rs_A}
		f_{\Delta P}(z) = f_\Delta(z-1)\,,
\end{equation}	
where $\Delta$ and $\Delta P$ are measures on $\mreals_+$ and $\integers_+$, with the latter 
obtained by applying the Poisson kernel $P$ to $\Delta$, to wit, 
$\Delta P(m) = \int \frac{e^{-a} a^m}{m!} \Delta(da)$.
Indeed, \prettyref{eq:rs_A} simply follows from Fubini's theorem:
$f_{\Delta P}(z) = \int \sum_{m\geq 0} \frac{e^{-a} a^m}{m!} \Delta(da) = 
\int e^{a(z-1)}\Delta(da) = f_\Delta(z-1)$.


We now proceed to proving \prettyref{eq:rs_ach}:
\begin{align} 
\delta(s,t) 
	      & = \sup_{\Delta}\sth{ \int e^{-s\theta} \Delta(d \theta): \|\Delta P\|_{\TV} \le t, \|\Delta\|_{\TV} \le 1} \nonumber \\
	      &= \sup_{\Delta}\{ f_\Delta(-s): \|f_{\Delta}\|_{H^\infty(D-1)} \le t, \|f_\Delta\|_{H^\infty(\Re <0)} \le 1 \}\label{eq:rs4}\\
	      &\le \sup_{f}\{ f(-s): \|f\|_{H^\infty(D-1)} \le t,
	      \|f\|_{H^\infty(\Re <0)} \le 1\} \triangleq \delta_{H^\infty}(t) \label{eq:rs5}
\end{align}
where
\eqref{eq:rs4} is by expressing the objective function in terms of Laplace transform of $\Delta$,
and relaxing the total variation constraint on $\Delta P$ by the 
$H^\infty$-norm constraint, in view of \prettyref{eq:DeltaTV}, \prettyref{eq:nuTV} and \prettyref{eq:rs_A};
\eqref{eq:rs5} is by extending the optimization from Laplace transforms $f_\Delta$ to all holomorphic functions on $\{\Re
<0\}$.

To solve the optimization problem~\eqref{eq:rs5} we first notice that for $s \leq 2$, we have $-s \in D-1$ and thus $\delta_{H^\infty}(t)=t$ (achieved by taking $f(z)=t$). Next consider $s > 2$. Let us
reparameterize $f(z) = g(1+{s\over z})$.
Note that (cf.~\prettyref{fig:hadamard})
\begin{figure}[ht]%
\centering
\begin{tikzpicture}[scale=1.5,font=\scriptsize,>=latex']
\draw[red,thick] (-1,0) circle (1);
\filldraw[thick] (-2.5,0) circle (0.02);
\node[below] at (-2.5,0) {$-s$};
\node[below] at (-1,0) {$-1$};
\node[below] at (-2,0) {$-2$};
\draw[-latex] (-3,0) -- (1,0) node[right] {$\text{Re}(z)$};
\draw[-latex,blue,thick] (0,-1.5) -- (0,1.5) node[right,black] {$\text{Im}(z)$};
 
\path[-latex] (0.7,0.5) edge[bend left] node [above] {$w=1+\frac{s}{z}$} (2.7,0.5);

\draw[-latex] (3,0) -- (6,0) node[right] {$\text{Re}(w)$};
\draw[-latex] (4,-1.5) -- (4,1.5) node[right] {$\text{Im}(w)$};
\draw[thick,blue] (5,-1.5) -- (5,1.5);
\draw[thick,red] (3.5,-1.5) -- (3.5,1.5);
\filldraw[thick] (4,0) circle (0.02);

\node[below] at (5.1,0) {$1$};
\node[below] at (4.1,0) {$0$};
\node[below] at (3.2,0) {$1-\frac{s}{2}$};

\end{tikzpicture}    
\caption{The function $w=1+\frac{s}{z}$ maps the circle $-1+\partial D$ to the line $\Re = 1-\frac{s}{2}$, 
$\Re=0$ to $\Re=1$, and the point $z=-s$ to $w=0$.}%
\label{fig:hadamard}%
\end{figure}
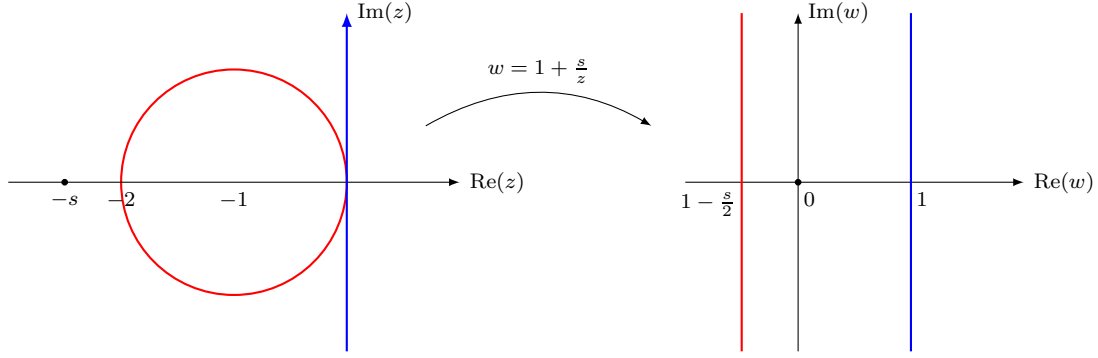
\[
\|f\|_{H^\infty(\Re <0)} = \sup_{\Re(z) < 0} |f(z)| = \sup_{\Re(z) < 0} \left|g\pth{1+\frac{s}{z}}\right|= \sup_{\Re(w) < 1} \left|g(w)\right| = 
\|g\|_{H^\infty(\Re < 1)}.
\]
Furthermore, since 
\begin{equation}
1 + \frac{1}{w} \in D \iff \Re(w) \leq - \frac{1}{2},
\label{eq:wz}
\end{equation}
 we have
\[
\|f\|_{H^\infty(D-1)} = 
\sup_{z \in D-1} \left|g\pth{1+\frac{s}{z}}\right|=
\sup_{1+\frac{x}{w-1} \in D} \left|g(w)\right|=
\sup_{\Re(w) <1- \frac{s}{2}} \left|g(w)\right| = \|g\|_{H^\infty(\Re < 1-\frac{s}{2})}.
\]
Hence, we have
\begin{equation}\label{eq:rs6}
	\delta_{H^\infty}(t) = \sup_{g}\{ g(0): \|g\|_{H^\infty(\Re < 1-\frac{s}{2})} \le t,
	      \|g\|_{H^\infty(\Re <1)} \le 1\} .
\end{equation}

For any $g$ feasible to \prettyref{eq:rs6}, which is bounded on the strip $\{z: 1-\frac{s}{2} \leq \Re(z) \leq 1\}$, 
by Hadamard's three-lines theorem (see, e.g., \cite[Theorem 12.3]{simon2011convexity}), 
$x \mapsto \log \|g\|_{H^\infty(\Re < x)} $ is convex. Since $(1-\frac{s}{2}) \frac{2}{s} + (1-\frac{2}{s})=0$, we get 
\[
|g(0)| \le
\|g\|_{H^\infty(\Re < 0)} 
\leq 
\pth{\|g\|_{H^\infty(\Re < 1-\frac{s}{2})} }^{\frac{2}{s}}
 \pth{\|g\|_{H^\infty(\Re <1)}  }^{1-\frac{2}{s}}
\leq t^{\frac{2}{s}},
\]
 for any $g$ feasible for \eqref{eq:rs6}. Furthermore, this is achieved by taking 
$g(z) = t^{\frac{2}{s}(1-z)} $. 
So we have proved
	$$ \delta_{H^\infty}(t) = t^{\frac{2}{s}}, $$
	and the optimizer in~\eqref{eq:rs5} is
\begin{equation}
\label{eq:fstar}
		f_*(z) = t^{-\frac{2}{z}},
\end{equation}	
which turns out to not depend on $s$. This completes the proof of~\eqref{eq:rs_ach}.

\medskip
Next we prove \eqref{eq:rs_conv} for $s\geq 2$. 
The upper bound is clear:
\begin{align} \delchi(t) &\le \delTV(t) \label{eq:rs1}\\
	      &\le \sup_{\Delta}\sth{ \int \Delta(d\theta) e^{-s \theta}: \|\Delta P\|_{\TV} \le 2t, \|\Delta\|_{\TV} \le 2} \label{eq:rs2}\\
	      & = 2 \delta(s,t) \leq 2 t^{\frac{2}{s}} \label{eq:rs3}
\end{align}
where~\eqref{eq:rs1} is from~\eqref{eq:delta_all},~\eqref{eq:rs2} is by dropping the constraint $\pi,\pi'\in \Pi$ and
taking $\Delta = \pi' - \pi$, and \eqref{eq:rs3} is by \prettyref{eq:rs_ach}.

Finally, we prove the lower bound part of \eqref{eq:rs_conv}. To this end we need to produce a pair of distributions $\pi,\pi'$ that are feasible for $\delta_{\chi^2}(t)$. We could try
to take them to be positive and negative part of the measure $\Delta$ that whose Laplace transform coincides with \prettyref{eq:fstar}, i.e., $f_\Delta = f_*$; however, this approach does not directly work (for example, if $\Delta$ were a finite measure, its characteristic function would have been given by
$e^{ic_t\over \omega} 1\{\omega\neq 0\}$, which is discontinuous at $\omega=0$ and thus not the characteristic function of any finite
measure on $\mreals$).
Instead, below we construct a sequence of measures approximating $\Delta$.

For each $0<\alpha<1$ (in the end we will take $\alpha \sim {1\over \log {1\over t}}$) define
$$ f_\alpha(z) = {1\over (z-1)^2} t^{-\frac{2}{z-\alpha}} = {1\over (z-1)^2} e^{c_t/(z-\alpha)}, \quad c_t \triangleq 2 \log \frac{1}{t}.$$
Let $G_\alpha$ be a real-valued function on $\mreals$ (whose existence is to be established), such that its Laplace transform is given by $f_\alpha$, i.e.
$$ \int_{\mreals} G_\alpha(a) e^{a z} da = f_\alpha(z) \qquad \forall z: \Re(z) \le 0\,.$$
Let $H_0$ be the following probability distribution on $\mreals_+$
	$$ H_0(dx) = (1-\lambda) \delta_0(dx) + \lambda \gamma e^{-\gamma x} 1\{x\ge 0\} \, dx\,,$$
	which is a mixture of a point mass at zero and an exponential distribution.
We then take
$$ \pi = H_0, \quad \pi' = (1-\tau_0) H_0 + \xi G_\alpha\,,$$
where
	\begin{equation}
	\tau_0 = \xi \int_{\mreals} G_\alpha(x) dx = \xi f_\alpha(0) = \xi e^{-{c_t\over \alpha}} 
	\label{eq:tau0}
	\end{equation}	
so that $\pi'$ is normalized.
To complete the proof we have to prove that a certain choice of $(\alpha,\xi,\gamma,\lambda)$ achieves the following six goals for all sufficiently small
$t$:
\begin{enumerate}
\item \label{item1} $G_\alpha$ is a real-valued density\footnote{Although not directly needed for the statistical lower bound, we require $G_\alpha$ to have a density in order to apply the Paley-Wiener theorem which ensures it is supported on $\reals_+$ and hence can be used as a valid prior.} supported on $\mreals_+$ ;
\item \label{item2} $\pi'$ is a probability measure (i.e.~it is a positive measure);
\item \label{item3} $\EE_\pi[\theta] \le 1$;
\item \label{item4} $\EE_{\pi'}[\theta] \le 1$;
\item \label{item5} The separation of means satisfies:
	$$ T(\pi')-T(\pi) \ge  {K\over (1+s)^2 \log^2 \frac{1}{t}} t^{\frac{2}{s}}\,,$$
	for some constant $K$ (here and below, $K$ denotes an absolute constant, possibly
different on different lines), where recall that $T(\pi) = \Expect_{\pi}[e^{-s\theta}]$;
\item \label{item6} The $\chi^2$-divergence satisfies:
	$$ \chi^2(\pi'P \| \pi P) \le t^2. $$
\end{enumerate}
We make the following choices of parameters:
	\begin{equation}\label{eq:rs_cho}
		\gamma = {\alpha\over 2}, \lambda = {\alpha \over 4}, \xi = {\alpha^2\over 16}, \alpha = {1\over c_t}
\end{equation}
Note that as $t\to 0$, all of the above vanish with polylog$(\frac{1}{t})$ speed.

We start with item \ref{item1}. To get a formula for $G_\alpha$ we notice that the inverse Fourier transform is well-define. Indeed, 
since $|f_\alpha(i \omega)| = \frac{1}{1+\omega^2} \exp(-\frac{c_t \alpha}{\omega^2+\alpha^2})$, 
we have $\omega \mapsto f_\alpha(i\omega)$ is in $L_1(\mreals)$. Hence there exists a continuous bounded function
$G_\alpha$ on $\mreals$ whose Fourier transform is given by $f_\alpha(i\omega)$. 
Moreover, $G_\alpha$ is real-valued since $f_\alpha(-i\omega) =
(f_\alpha(i\omega))^*$, where $*$ denotes the complex conjugation. 
To ensure that $G_\alpha$ is supported on $\reals_+$, note that $f_\alpha$ is
holomorphic in $\{\Re \le 0\}$ and, furthermore, 
\[
|f_\alpha(x+i y)|  = \frac{1}{(1-x)^2 + y^2} \exp\pth{\frac{-c (x-\alpha)}{(x-\alpha)^2+y^2}},
\]
thus
\[
\sup_{x < 0} \int_{\reals} |f_\alpha(x+i y)|^2 dy 
\leq \int_{\reals} \frac{1}{1 + y^2}d y \exp\pth{\frac{c}{\alpha}}  < \infty.
\]
Then the Paley-Wiener theorem (cf.~\cite[Theorem 19.2]{RudinPapa}) implies that $G_\alpha$ is supported on $\mreals_+$. We also get an
estimate on the tail of $G_\alpha(a)$ for $a>0$ as follows:
By the inverse Fourier transform, 
	\begin{align} 
	G_\alpha(a) &= {1\over 2\pi} \int_{-\infty}^\infty e^{c_t\over i\omega - \alpha} {1\over (i\omega -1)^2}
	e^{-i\omega a} d\omega \nonumber \\
			&= {1\over 2\pi i} \int_{0-i\infty}^{0+i\infty} e^{c_t\over z - \alpha} {1\over (z -1)^2}
	e^{-z a} dz \nonumber \\
		&= {1\over 2\pi i} \int_{\tfrac{\alpha}{2}-i\infty}^{\tfrac{\alpha}{2}+i\infty} e^{c_t\over z - \alpha} {1\over (z -1)^2}
	e^{-z a} dz  \label{eq:rs7}\\
		&= {1\over 2\pi} \int_{-\infty}^\infty e^{c_t\over i\omega -\tfrac{\alpha}{2}} {1\over (i\omega + \tfrac{\alpha}{2}-1)^2}
	e^{-(i\omega +\tfrac{\alpha}{2})a} d\omega \nonumber,
\end{align}
where in~\eqref{eq:rs7} we shifted the contour of integration 
since the integrand is holomorphic in the strip $\{0 \leq \Re \leq \frac{\alpha}{2}\}$.
Thus
	\begin{align} 
	|G_\alpha(a)| 	&\le {e^{-a\tfrac{\alpha}{2}}\over 2\pi} \int_{-\infty}^\infty {1\over \omega^2 +
		(1-\tfrac{\alpha}{2})^2} d\omega  \nonumber \\
	&= {1\over 2(1-\tfrac{\alpha}{2})} e^{-a\tfrac{\alpha}{2}} \le e^{-a\tfrac{\alpha}{2}}\,,\label{eq:rs8}
\end{align}
where the last step follows from $\int_{-\infty}^\infty
{1\over K^2+x^2} dx = {\pi \over K}$ and the assumption that $\alpha \le 1$.

We proceed to item \ref{item2}. In view of~\eqref{eq:rs8}, to ensure the positivity of $\pi'$  we only need to verify 
	$$ (1-\tau_0) \lambda \gamma e^{-a \gamma} \ge \xi e^{-{a\alpha \over 2}} $$
Due to the choices in~\eqref{eq:rs_cho} this is equivalent to $1-\tau_0 \ge {1\over 2}$ which is satisfied for sufficiently small
$t$.

For item \ref{item3}, we have $\EE_\pi[\theta] = \lambda {1\over \gamma} = {1\over 2}$. 

For item \ref{item4}, 
we can compute the first moment of
$G_\alpha$ from its Laplace transform as follows:
\[
	\int_0^\infty G_\alpha(a) a da = \left.{d\over dz}\right|_{z=0} f_\alpha(z) =
e^{-\tfrac{c_t}{\alpha}}(2-\tfrac{c_t}{\alpha^2}) = 
e^{-\tfrac{1}{\alpha^2}}(2-\tfrac{1}{\alpha^3})  \to 0, 
\]
since $\alpha\to0$ as $t\to0$.
Thus, we have $\EE_{\pi'}[\theta] = (1-\tau_0){1\over 2} + \xi \int a G_\alpha \to {1\over 2}$ as $t\to0$.

For item \ref{item5}, note that 
\begin{equation}\label{eq:rs11}
	T(G_\alpha) = 
	\int e^{-s a} G_\alpha(a) da = f_\alpha(-s) = {1\over (s+1)^2} t^{{2\over s+\alpha}} \ge {1\over (s+1)^2}
	t^{\frac{2}{s}}\,,
\end{equation}
Since $T(H_0) = 1- {s\lambda \over s+\gamma} \in [0,1]$, by linearity, we have from~\eqref{eq:rs11}
\begin{align*}
T(\pi') - T(\pi) 
= & ~ -\tau_0 \pth{1- {s\lambda \over s+\gamma}} + \xi \int_0^\infty e^{-a} G_\alpha(a) da  
\ge -\tau_0 + {\xi \over (s+1)^2}
	t^{\frac{2}{s}} \\
\overset{\prettyref{eq:tau0}}{=}  & ~ 
	\xi\pth{ {1\over (s+1)^2}
	t^{\frac{2}{s}}  - e^{-4 \log^2 \frac{1}{t}}}	 \geq 
	{\xi \over 2(s+1)^2} t^{\frac{2}{s}},
\end{align*}
where the last step holds	for all sufficiently small $t$.

Finally, for item \ref{item6}, we have
\begin{align}
\chi^2((1-\tau_0) H_0 P + \xi G_\alpha P \| H_0 P) 
= & ~ \sum_{m\geq 0} { (\xi G_\alpha P (m) - \tau_0 H_0 P(m))^2 \over H_0
P(m)} 	\nonumber \\
= & ~ \xi^2 \sum_{m\geq 0} {{G_\alpha P (m)}^2 \over H_0 P(m)} - \tau_0^2	 \le \xi^2 \sum_{m\geq 0} {{G_\alpha P (m)}^2 \over H_0 P(m)}.
\label{eq:rs15}
\end{align}
For the denominator we have 
\begin{equation}\label{eq:rs14}
	H_0 P(m) = (1-\lambda) \indc{m=0} + \lambda (1-\beta) \beta^m, \quad \beta = {1 \over \gamma+1}.
\end{equation}
To bound the numerator, by \prettyref{eq:rs_A} the $z$-transform of $G_\alpha P$ is given by
\begin{equation}\label{eq:rs12}
	f_{G_\alpha P}(z) = f_\alpha(z-1) = {1\over ( z-2)^2} e^{c_t \over
z-1-\alpha}
\end{equation}
Our goal is to show that, for $r=1+{\alpha\over
2}$, we have 
 $\|f_{G_\alpha P}\|_{H^\infty(rD)} \le K t$ for some constant $K$. Indeed, the first factor in~\eqref{eq:rs12} 
is bounded by 
$\|{1\over ( z-2)^2} \|_{H^\infty(rD)} \leq {1\over ( 1-\alpha/2)^2} \leq 4$ for all sufficiently small $t$. 
For the second factor, 
in view of \prettyref{eq:wz}, for any $\rho>0$ we have
\begin{equation}
\|e^{\rho/z}\|_{H^\infty(D-1)} = e^{-\rho/2}.
\label{eq:erhoz}
\end{equation} 
Set $\rho = 1 + \frac{3\alpha}{4}$, we have
\[
\|f_{G_\alpha P}\|_{H^\infty(rD)} \stepa{\leq} 4 \|e^{c_t/z}\|_{H^\infty(rD-1-\alpha)}
\stepb{\leq}
4 \|e^{c_t/z}\|_{H^\infty(\rho(D-1)} \stepc{=}   4 e^{-{c_t\over 2\rho}}  \stepd{\leq} 10 t,
\]
where (a) is by \prettyref{eq:rs12}; (b) is because $rD-1-\alpha \subset \rho(D-1)$;
(c) is by \prettyref{eq:erhoz}; (d) is by the choices in \prettyref{eq:rs_cho}.

	From Cauchy's integral formula~\eqref{eq:cauchy} we obtain the estimate of the coefficients:
		\begin{equation}\label{eq:rs16}
			G_\alpha P (m) \le K r^{-m} t.
\end{equation}		
Using~\eqref{eq:rs14} and~\eqref{eq:rs16} we continue~\eqref{eq:rs15} to get 
$$ \chi^2((1-\tau_0) H_0 P + \xi G_\alpha P \| H_0 P) \le K t^2 {\xi^2 \over \lambda \gamma} \sum_{m\ge0} (r^2
\beta)^{-m}\,.$$
Since $r^2 \beta = 1+ {\alpha\over 2\bar\epsilon} + o(\alpha)$ we conclude
$$ \chi^2((1-\tau_0) H_0 P + \xi G_\alpha P \| H_0 P) \le K t^2 {\xi^2 \over \lambda \gamma \alpha}  \le t^2 $$
for all sufficiently small $t$ due to~\eqref{eq:rs_cho}.
This completes the proof of \prettyref{eq:rs_conv}.
\end{proof}

\subsection{Proof of \prettyref{th:exp}}
	\label{seca:pf-exp}

\begin{lemma}[Auxiliary convex analysis]\label{lem:cau} Let $X$ and $Y$ be a dual pair of finite-dimensional 
vector spaces and $\Pi$ a compact convex subset of $X$. 
Let $f(x,y)$ be a function on $\Pi \times Y$ concave in $x$ and convex in $y$. Assume in addition:
\begin{enumerate}
\item There exists $e_0\in Y$ such that $\iprod{x}{e_0}=1$ for any $x\in \Pi$.
\item We have $f(x,y+c e_0) = f(x,y)$ for any $c\in \mreals$.
\item For any $c\in\mreals$ we have\footnote{In particular, this implies that $f(x,y) = f(x,-y)$, $f(x,0)=0$ and $f\ge 0$.}
	$$ f(x,c y) = |c| f(x,y)\,.$$
\end{enumerate}
Fix $g\in Y$ and define the following quantities
\begin{align} d(x'\|x) &\eqdef \sup\{\iprod{x-x'}{y}: f(x,y) \le 1\},\\
   d_S(x', x) &\eqdef d(x', (x+x')/2),\\
   \delta_0(t) &\eqdef \inf_y \sup_{x\in \Pi} t f(x,y) + |\iprod{x}{g-y}|,\\
   \delta_1(t) &\eqdef \sup_{x,x' \in \Pi}\{\iprod{x-x'}{g}: d(x'\|x)\le t\},\\
   \delta_2(t) &\eqdef \sup_{x,x' \in \Pi}\{\iprod{x-x'}{g}: d_S(x', x)\le t\}.
\end{align}
We claim the following:
	\begin{enumerate}
	\item $d_S(x', x) = d_S(x, x')$
	\item $d_S(x',x) \le d(x\|x')$
	\item ${1\over 2} \delta_2(t) \le \delta_1(t) \le \delta_2(t)$
	\item And the key result:
		\begin{equation}\label{eq:cau}
			{1\over 2} \delta_1(t) \le \delta_0(t) \le \delta_1(t) .
\end{equation}		
	\end{enumerate}
\end{lemma}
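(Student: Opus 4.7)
The plan is to treat Lemma~\ref{lem:cau} as an abstract version of \prettyref{prop:delminmax}: $f(x,y)$ plays the role of $\sqrt{\Var_{\pi P}[g]}$, the condition $\langle x,e_0\rangle=1$ encodes that $\pi$ is a probability measure (so that $f$ is translation-invariant along $e_0$), and property~3 encodes the homogeneity of the standard deviation. Under this correspondence, $d(x'\|x)$ is the abstract counterpart of the variational formula for $\chi^2$-divergence and $\delta_0(t),\delta_1(t)$ correspond to $\deltaa(t),\delchi(t)$. I will prove the four claims in order, with (1)--(3) reducing to short manipulations using properties~1--3 and (4) following the minimax-swap template of \prettyref{prop:delminmax}.

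For claim~(1), property~3 implies $f(\cdot,-y)=f(\cdot,y)$, so replacing $y\mapsto-y$ in the supremum defining $d_S$ simultaneously negates the linear objective and preserves the constraint, yielding $d_S(x',x)=d_S(x,x')$. For claim~(2), substituting $y=2y'$ turns the constraint $f((x+x')/2,y)\le 1$ into $f((x+x')/2,y')\le 1/2$ by property~3; concavity of $f$ in the first argument then gives $f(x',y')\le 2f((x+x')/2,y')\le 1$, and combining with $\langle(x-x')/2,y\rangle=\langle x-x',y'\rangle$ shows $d_S(x',x)\le d(x\|x')$. Claim~(3) is immediate: $d(x'\|x)\le t$ implies (via (1) and (2)) $d_S(x',x)=d_S(x,x')\le d(x'\|x)\le t$, so $\delta_1(t)\le\delta_2(t)$; conversely, for any $(x,x')$ feasible for $\delta_2$, the midpoint $\tilde x=(x+x')/2$ lies in $\Pi$ (by convexity), satisfies $d(x'\|\tilde x)\le t$, and gives $\langle\tilde x-x',g\rangle=\tfrac{1}{2}\langle x-x',g\rangle$, so $\delta_1(t)\ge\delta_2(t)/2$.

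For the upper bound $\delta_0(t)\le\delta_1(t)$ in claim~(4), I rewrite $|\langle x,g-y\rangle|=\sup_{\xi\in[0,2],\,x'\in\Pi}\langle x-\xi x',g-y\rangle$ and arrive at the maximization of $F_t(u,y)\eqdef tf(x,y)+\langle x-\xi x',g-y\rangle$ over $u=(x,x',\xi)\in U\eqdef\Pi\times\Pi\times[0,2]$ and minimization over $y\in Y$. The concave-convex-like property of $F_t$ is verified exactly as in \prettyref{prop:delminmax} (with the same $\xi$-weighted averaging construction of $u_3$). Ky Fan's minimax theorem (\prettyref{thm:minimax}) then swaps inf and sup. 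The inner $\inf_y F_t(u,y)$ is $-\infty$ unless $\xi=1$: taking $y=ce_0$ with $c\to\pm\infty$ leaves $f(x,y)=0$ by properties~2 and~3 while driving the linear term to $-\infty$ since $\langle x-\xi x',e_0\rangle=1-\xi\ne 0$. When $\xi=1$, the $1$-homogeneity in $y$ shows the infimum is $0$ iff $\langle x-x',y\rangle\le tf(x,y)$ for all $y$, which in turn is equivalent to $d(x'\|x)\le t$. Maximizing over the resulting feasible set yields $\delta_0(t)\le\delta_1(t)$.

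The reverse bound $\delta_0(t)\ge\delta_1(t)/2$ is direct: for any $y$ and any $(x,x')$ with $d(x'\|x)\le t$, bound the sup in $\delta_0(t)$ below by the average of its integrand at $\tilde x=x$ and $\tilde x=x'$; using $|\langle x,g-y\rangle|+|\langle x',g-y\rangle|\ge|\langle x-x',g-y\rangle|\ge\langle x-x',g\rangle-\langle x-x',y\rangle$ together with the feasibility inequality $\langle x-x',y\rangle\le tf(x,y)$ gives $\sup_{\tilde x\in\Pi}tf(\tilde x,y)+|\langle\tilde x,g-y\rangle|\ge\tfrac{1}{2}\langle x-x',g\rangle$. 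Taking $\inf_y$ and then $\sup$ over feasible $(x,x')$ produces $\delta_0(t)\ge\delta_1(t)/2$. The main obstacle in the whole argument is verifying the hypotheses of \prettyref{thm:minimax}, namely upper semicontinuity of $u\mapsto F_t(u,y)$ on the compact set $U$; this is inherited from the concavity of $f(\cdot,y)$ on the compact convex set $\Pi$ and the continuity of the bilinear pairing on the finite-dimensional dual pair, so the abstract setting poses no additional difficulty beyond what was already handled in \prettyref{prop:delminmax}.
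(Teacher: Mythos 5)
Your proof mirrors the paper's own proof of Lemma~\ref{lem:cau} closely: the arguments for claims (1)--(3) and for the lower bound in (4) coincide with the paper's. The one place you diverge is in setting up the minimax swap for the upper bound in (4). You parametrize the auxiliary variable as $(x',\xi)\in\Pi\times[0,2]$ and then must verify the weaker concave-convex-like condition exactly as in \prettyref{prop:delminmax}. The paper's proof of Lemma~\ref{lem:cau} instead bounds $|\langle x,g-y\rangle|\le\sup_{\tilde x'\in\Pi_2}\langle x-\tilde x',g-y\rangle$ where $\Pi_2\eqdef\co\{0,2\Pi\}$ is a single compact convex set; the function $(x,\tilde x')\mapsto tf(x,y)+\langle x-\tilde x',g-y\rangle$ is then genuinely jointly concave on $\Pi\times\Pi_2$ (concave in $x$ plus affine in $(x,\tilde x')$), so Ky~Fan's theorem applies directly without the $\xi$-weighted construction of $u_3$. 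Both routes reach the same endpoint, but the $\Pi_2$ reparametrization is the cleaner bookkeeping. One small slip: your first display claims $|\langle x,g-y\rangle|=\sup_{\xi\in[0,2],\,x'\in\Pi}\langle x-\xi x',g-y\rangle$ with equality, whereas in general this is only an upper bound $\le$ (strict inequality occurs, for instance, when $\langle x,g-y\rangle\ge 0$ but $\min_{x'\in\Pi}\langle x',g-y\rangle<0$); since you only use the $\le$ direction, the conclusion is unaffected.
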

\begin{proof}
\begin{enumerate}
\item This is clear. 
\item To prove $d(x'\|(x+x')/2) \le d(x\|x')$ just notice that $f((x+x')/2,y) \le 1$ implies $f(x,y)\le 2$ by concavity and
positivity. 
\item Implied from above.
\item For the lower bound notice
	$$ \delta_0(t) = \inf_y \sup_{x,x'\in \Pi} t {f(x,y)+f(x',y)\over 2} + {|\iprod{x}{g-y}| +
	|-\iprod{x'}{g-y}|\over 2} \,.$$
In the inner supremum we set $x,x'$ to be the ones achieving $\delta_1(t)$. Then we have $\iprod{x-x'}{g}\ge \delta_1$
and for any $y$ we have 
\begin{equation}\label{eq:cau_1}
	\iprod{x-x'}{y} \le t f(x,y)\,.
\end{equation}
We further lower bound
\begin{align} \delta_0(t) &\ge {1\over 2} \inf_y t (f(x,y)+f(x',y)) + |\iprod{x-x'}{g-y}| \\
			&\ge {1\over 2} \inf_y t f(x,y) + |\iprod{x-x'}{g-y}| \\
			&\ge {1\over 2} \iprod{x-x'}{g} + {1\over 2}\inf_y t f(x,y) - \iprod{x-x'}{y}\,,
\end{align}			
	where in the first step we used convexity of $|\cdot|$, in the second positivity of $f$ and in the last step $|a| \ge a$.
	From~\eqref{eq:cau_1} we conclude that $\delta_0 \ge {1\over 2} \delta_1$.

To prove an upper bound we denote the convex hull $\Pi_2 = \co\{0,2\Pi\} = \{\mu x: x\in \Pi, \mu\in[0,2]\}$ and notice
	$$ \delta_0(t) \le \inf_y \sup_{x\in \Pi, x' \in \Pi_2} t f(x,y) + \iprod{x-x'}{g-y} $$
	We now apply minimax theorem to get
	$$ \delta_0(t) \le \sup_{x\in \Pi, x' \in \Pi_2}  \inf_y  t f(x,y) + \iprod{x-x'}{g-y} $$
	We notice that the inner infimum is $-\infty$ unless $\iprod{x-x'}{h}=0$, i.e. that $x'\in \Pi$, and thus 
	\begin{equation}\label{eq:exp_ap1}
		\delta_0(t) \le \sup_{x\in \Pi, x' \in \Pi}  \inf_y  t f(x,y) + \iprod{x-x'}{g-y} 
\end{equation}	
	Due to the homogeneity of $f(x,\cdot)$ we see that further
		$$ \inf_y t f(x,y) - \iprod{x-x'}{y} = \begin{cases} -\infty, &d(x'\|x) > t\,,\\
							0, d(x'\|x) \le t
							\end{cases}\,. $$
	Consequently, the right-hand side of~\eqref{eq:exp_ap1} evaluates to exactly $\delta_1(t)$.
\end{enumerate}
\end{proof}

\begin{proof}[Proof of \prettyref{th:exp}]

Recall that $ D(P\|Q) =\int dP \log {dP \over dQ}$ denote the Kullback-Leibler (KL) divergence.
We need to introduce two other divergence-like quantities before proceeding. 
\begin{align*}
	 d_J(P,Q) &\eqdef D(P\|Q) + D(Q\|P) = \int dP \log {dP \over dQ} + dQ \log {dQ \over dP}\\
	 d(P_{\gamma'} \| P_{\gamma}) &\eqdef 
	\sup_{\phi\in\matf} \{ \EE_{P_\gamma}[\phi] - \EE_{P_{\gamma'}}[\phi]: 
		\Var_{P_\gamma}[\phi]\le 1\}. 
\end{align*}
We notice that $d_J$ is known as the Jeffreys divergence, while $d(P_\gamma\| P_{\gamma'})$ describes the 
dissimilarity between distributions $P_{\gamma'}$ and $P_{\gamma}$ in terms of the expectations of
unit-variance functions in $\matf$;\footnote{Note that without the restriction $\phi\in\calF$, the supremum coincides with $\chi(P_{\gamma'} \| P_{\gamma})$; see \prettyref{eq:chi_va}.}
an explicit expression for $d$ is given  in~\eqref{eq:exp_d2} below. The modulus of continuity of $T$ with respect to
$d_J$ and $d$ will also play a role:
\begin{align*} \omega_J(t) &\eqdef \sup_{\gamma,\gamma' \in \Gamma_0} \{T(\gamma)-T(\gamma'): d_J(P_\gamma,
		P_{\gamma'}) \le t^2\}\,,\\
	\omega_d(t) &\eqdef \sup_{\gamma,\gamma' \in \Gamma_0} \{T(\gamma)-T(\gamma'): d(P_{\gamma'}\|P_{\gamma}) \le
	 t\} 
\end{align*}

\apxonly{
\begin{enumerate}
\item Hellinger and affinity:
	$$ -\ln \Aff(P_\gamma,P_{\gamma'}) = {1\over2}(C(\gamma') + C(\gamma)) - C\left(\gamma+\gamma'\over
	2\right)$$
	$$ H^2(P_\gamma, P_{\gamma'}) = 2-2\Aff(P_\gamma, P_{\gamma'})$$
\item Chi-squared:
	$$ \ln(1+\chi^2(P_{\gamma'}\|P_\gamma)) = C(2\gamma'-\gamma) + C(\gamma)-2C(\gamma')\,.$$
\item Expression for $d$:
	$$ d( P_{\gamma'}\| P_{\gamma}) =   $$
}

We start by establishing the following comparison
	\begin{equation}\label{eq:bbj_1}
		\omega_J(t) \le \omega_H(t)\,.
\end{equation}	
Indeed, an application of Jensen inequality shows
$$ -2 \log(1-\tfrac{1}{2} H^2(P,Q)) = -2 \log \int \sqrt{dP dQ} \le D(P||Q) $$
and from symmetry we, thus, have
$$ -2 \log(1-\tfrac{1}{2} H^2(P,Q)) \le \tfrac{1}{2} d_J(P,Q)\,.$$
Lower bounding the left-hand side we get $ H^2(P,Q) \le \tfrac{1}{2} d_J(P,Q) \le d_J(P,Q)$
completing~\eqref{eq:bbj_1}.

A routine two-point argument yields the lower bound
	\begin{equation}\label{eq:bbj_2}
		\omega_H(c_3/\sqrt{n}) \le \sqrt{R_n^*(\Gamma_0)}\,
\end{equation}	
for some absolute constant $c_3$.

Our proof will be completed in the following steps:
\begin{itemize} 
\item First, we show by appealing to the minimax theorem the
constructive part:
\begin{equation}\label{eq:exp_p1}
		\sqrt{R_n^*(\Gamma_0)} \le \omega_d(1/\sqrt{n})\,. 
\end{equation}	
\item Next we will show that for some $c_2>0$ and all $t>0$ we have
\begin{equation}\label{eq:exp_p2}
		\omega_J(t) \ge \omega_d(c_2 t)\,.
\end{equation}
\item Subadditivity property of $\omega_d$:
\begin{equation}\label{eq:exp_p2b}
	\omega_d(ct) \ge c\omega_d(t), \quad  \forall 0\le c\le 1\,.
\end{equation}
\item Together~\eqref{eq:bbj_1},~\eqref{eq:bbj_2} and the three steps above imply
	\begin{equation}\label{eq:exp_main1}
		\omega_H({c_0\over \sqrt{n}}) \le \sqrt{R_n^*} \le \omega_d({c_2\over c_2\sqrt{n}}) \le {1\over c_2} \omega_H({1\over \sqrt{n}})\,.
\end{equation}	
	The proof will conclude by showing that for all $n\ge {2\over c_3^2}$ we have (for some constant $c_0>0$):
	\begin{equation}\label{eq:bbj_3}
		\omega_H({c_3\over \sqrt{n}}) \ge c_0 \omega_H({1\over\sqrt{n}}) 
\end{equation}	
\end{itemize}

We proceed to proving the above claims. Let us extend the family $\matf$ to $\matf^* =
\lspan\{\matf, 1\}$ by adding constants. Similarly, we extend $\phi$ to $\phi^*(x)=(1,\phi(x)) \in \reals^{d+1}$ by adding a
constant coordinate. (Note that as exponential family $\phi^*$ no longer satisfies non-degeneracy
condition~\eqref{eq:exp_nondeg}). 
We show an upper bound by considering estimators of the form 
	$$ \hat T = {1\over n} \sum_i g(X_i) = {1\over n} \sum_i \iprod{\gamma^*}{\phi^*(X_i)}\,,$$
where $g$ is an arbitrary (to be selected) element of $\matf^*$, which we represented (here and below) as $g(x) =
\iprod{\gamma^*}{\phi^*(x)}$ for some $\gamma^*\in\reals^{d+1}$. (So everywhere above $g$ and $\gamma^*$ are coupled by this relation.)
Recall that $\Expect_\gamma[\phi(X)]=\mu_f(\gamma)=\mu$ and 
the functional to be estimated is $T(\gamma)=\Iprod{h}{\mu}=\Expect_\gamma[\Iprod{h}{\phi(X)}]$.
Define $h^*=(0,h)$ and $\mu^*=(1,\mu)=\Expect[\phi^*(X)]$. Then we have $T(\gamma)=\Iprod{h^*}{\mu^*}=\Expect_\gamma[\Iprod{h^*}{\phi^*(X)}]$.
We have:
	\begin{align} \inf_{g \in \matf^*} \sup_{\gamma \in \Gamma_0} \sqrt{\EE_{\gamma}[(T-\hat T)^2]} &\le 
		\inf_{g  \in \matf^*}\sup_{\gamma \in \Gamma_0} 
		{1\over \sqrt{n}} \sqrt{\Var_{P_{\gamma}}[g(X)]} 
		+ |\EE_{P_\gamma}[g(X)] - T(\gamma)|\\
	&= \inf_{\phi  \in \matf^*} \sup_{\mu \in M_0} 
		{1\over \sqrt{n}} \sqrt{\Var_{\tilde P_{\mu}}[g(X)]} 
		+ |\iprod{\gamma^* - h^*}{\mu^*}|\\
	&=\delta_0(1/\sqrt{n})\,,\label{eq:exp_d1}
\end{align}		
where $\delta_0$ is defined as (similar to $\deltaa$ previously defined in \prettyref{eq:deltach_def})
$$ \delta_0(t) \eqdef \inf_{g  \in \matf^*} \sup_{\mu \in M_0} 
		 t \sqrt{\Var_{\tilde P_{\mu}}[g(X)]} 	+ |\iprod{\gamma^* - h^*}{\mu^*}|\,. $$
This definition coincides with $\delta_0$ defined in Lemma~\ref{lem:cau} if we set:
	\begin{itemize}
	\item $X=\mreals^{d+1}$, $\Pi = \{1\} \times \Mu_0$, $Y=\matf^*$
	\item Each element $g \in \matf^*$ can be written as $g(x) = y_0 + \sum y_i \phi_{i}(x) =
	\iprod{y}{\phi^*(x)}$, this identifies $Y=\matf^*$ with $\mreals^{d+1}$.
	\item We establish the dual pairing between $X$ and $Y$ as usual $\iprod{x}{y} = \sum_{i=0}^n x_i y_i$. Note
	that when $x=(1,\mu) \in \Pi$ and $y$ is identified with $g$, we have $\iprod{x}{y} = \EE_{\tilde P_\mu}[g(X)]$.
	\item For $x=(1,\mu) \in \Pi$ and $y$ identified with $g$, we set $f(x,y) = \sqrt{\Var_{\tilde P_\mu}[g(X)]}$
	\item $e_0=(1,0,\ldots,0)$ corresponds to the constant function $1$ in $\matf^*$.
	\end{itemize}
Clearly $\iprod{x}{e_0} = \EE_{P_\mu}[1]=1$ for any $x\in \Pi$. Note also that in the definition of $d(P_\gamma \| P_{\gamma'})$ we may extend the
supremum from $\matf$ to $\matf^*$ without change. With these settings, Lemma~\ref{lem:cau} shows
	$$ {1\over2} \omega_d(t) \le \delta_0(t) \le \omega_d(t)\,.$$
This completes the proof of~\eqref{eq:exp_p1}.

We proceed to proving~\eqref{eq:exp_p2}. We start with some preparatory remarks. A simple calculation reveals that
\begin{equation}\label{eq:exp_dj0}
		d_J(P_{\gamma_1}, P_{\gamma_2}) = \iprod{\gamma_1 - \gamma_2}{\mu_f(\gamma_1) - \mu_f(\gamma_2)}\,.
\end{equation}	
Similarly, we have the following expression for $d$:
\begin{align}  d(\tilde P_{\mu'} \| \tilde P_{\mu}) &=  \sup_{a\in \mreals^n}\{ \iprod{\mu-\mu'}{a}: \iprod{\tilde \Sigma(\mu)
a}{a}\le 1\}\\
		&= \sqrt{\iprod{\tilde \Sigma^{-1}(\mu) \Delta}{\Delta}}\,, \quad \Delta = \mu-\mu'\,, \label{eq:exp_d2}
\end{align}		
where we used the identity
\begin{equation}\label{eq:exp_cs}
		\sup_{y\in \mreals^n} \{\iprod{y}{b}: \iprod{Ay}{y}\le 1\} = \sqrt{\iprod{A^{-1} b}{b}}\,,
\end{equation}	
which follows from the Cauchy-Schwarz inequality: $\iprod{y}{b}^2 = \Iprod{A^{\frac{1}{2}}y}{A^{-\frac{1}{2}}b}^2 \le \iprod{A^{-1}
b}{b} \iprod{A y}{y}$.

Thus, we get a more explicit formula for $\omega_d$:
	\begin{equation}\label{eq:exp_d2x}
		\omega_d(t) = \sup_{\mu_1,\mu_2 \in M_0} \left\{ \iprod{\Delta}{h}: \iprod{\tilde \Sigma^{-1}(\mu_2)
	\Delta}{\Delta} \le t^2\,, \, \Delta = \mu_1 - \mu_2 \right\}\,.
\end{equation}	
This expression clearly shows~\eqref{eq:exp_p2b}.

We next establish a key inequality connecting the behavior of $\tilde \Sigma^{-1}(\lambda \mu_1 + \bar \lambda \mu_0)$ with
the assumption~\eqref{eq:exp_a2}. Consider the following chain of inequalities: for any $a \in \mreals^n$,
\begin{align} \iprod{\tilde \Sigma^{-1}(\lambda \mu_1 + \bar \lambda \mu_0) a}{a}^{1\over 2} &=
	\sup_y \left\{\iprod{y}{a}: \iprod{\tilde\Sigma(\lambda \mu_1 + \bar \lambda \mu_0) y}{y}^{1\over2} \le
	1\right\}\label{eq:exp_k1}\\
	&\le 
	\sup_y \left\{\iprod{y}{a}: \lambda\iprod{\tilde\Sigma(\mu_1) y}{y}^{1\over 2} + 
			\bar\lambda\iprod{\tilde\Sigma(\mu_2) y}{y}^{1\over 2}
				\le 1\right\}\label{eq:exp_k2}\\
	&\le \sup_y \left\{\iprod{y}{a}: \lambda\iprod{\tilde\Sigma(\mu_1) y}{y}^{1\over 2}
				\le 1\right\} \label{eq:exp_k3}\\
	&={1\over \lambda} \iprod{\tilde \Sigma^{-1}(\mu_1) a}{a}^{1\over 2} \label{eq:exp_key}\,,
\end{align}
where in~\eqref{eq:exp_k1} we used~\eqref{eq:exp_cs}, in~\eqref{eq:exp_k2} we applied~\eqref{eq:exp_a2},
in~\eqref{eq:exp_k3} we omitted the second term, which is non-negative by~\eqref{eq:exp_nondeg2}, and
in~\eqref{eq:exp_key} we used~\eqref{eq:exp_cs} again.

Next, we obtain an upper bound on $d_J(P_{\gamma_1}, P_{\gamma_2})$ by continuing from~\eqref{eq:exp_dj0}. We
denote $\mu_i = \mu_f(\gamma_i), i=1,2$ and $\Delta = \mu_1-\mu_2$. Notice 
$$ \gamma_1 - \gamma_2 = \int_0^1 \dot \gamma_\lambda d\lambda\,,$$
where with a slight abuse of notation we define $\gamma_\lambda \triangleq \gamma_r(\lambda \mu_1 + \bar \lambda \mu_2)$ and
\begin{equation}\label{eq:exp_djx}
	\dot \gamma_\lambda = {d\over
d\lambda}\gamma_\lambda = \sum_{j=1}^n {\partial \gamma_r \over \partial \mu_j} (\mu_{1,j} - \mu_{2,j}) 
\overset{\eqref{eq:exp_jacinv}}{=}
\Sigma(\gamma_\lambda)^{-1} \Delta\,.
\end{equation}
 Then we have
\begin{align} d_J(P_{\gamma_1}, P_{\gamma_2}) &= 
	\int_0^1 d\lambda \iprod{\tilde \Sigma^{-1}(\lambda \mu_1 + \bar\lambda
	\mu_2)\Delta}{\Delta}\label{eq:exp_dj1}\\
	&\le \int_0^{1/2} d\lambda {1\over \bar \lambda} \iprod{\tilde \Sigma^{-1}(\mu_2)\Delta}{\Delta} + 
		\int_{1/2}^{1} d\lambda {1\over \lambda} \iprod{\tilde \Sigma^{-1}(\mu_1)\Delta}{\Delta}
			\label{eq:exp_dj2}\\
	& = \ln 2 \cdot \iprod{(\tilde \Sigma^{-1}(\mu_2) + \tilde \Sigma^{-1}(\mu_1)) \Delta}{\Delta}
			\label{eq:exp_dj3}\,,
\end{align}
where~\eqref{eq:exp_dj1} is from~\eqref{eq:exp_djx},~\eqref{eq:exp_dj2} is from~\eqref{eq:exp_key}
and~\eqref{eq:exp_dj3} is by computing the integrals.

Finally, consider a pair $\mu_1,\mu_2 \in M_0$ in the optimization~\eqref{eq:exp_d2x}, i.e. such that
\begin{equation}\label{eq:exp_dj_pr}
	\iprod{\tilde \Sigma(\mu_2) \Delta}{\Delta} \le t^2\,,  
\end{equation}
where as usual $\Delta = \mu_1 - \mu_2$. We set 
\begin{equation}\label{eq:exp_dj_awesome}
	\mu_1' = {2\over 3}\mu_1 + {1\over 3} \mu_2\,, \quad \mu_2' = {1\over 3}\mu_1 + {2\over 3} \mu_2\,,
\end{equation}
 From convexity we have $\mu_1',\mu_2' \in M_0$ and also 
\begin{equation}\label{eq:exp_dj4a}
 	\iprod{\mu_1' - \mu_2'}{g} = {1\over 3} \iprod{\Delta}{g}\,.
\end{equation} 
We claim that for some constant $c'>0$ we have
\begin{equation}\label{eq:exp_dj4}
	d_J(\tilde P_{\mu_1'}, \tilde P_{\mu_2'}) \le c' t^2\,,
\end{equation}
which, together with~\eqref{eq:exp_dj4} would clearly establish~\eqref{eq:exp_p2}. Notice that
from~\eqref{eq:exp_dj_awesome} and~\eqref{eq:exp_key} we have
\begin{align} 
\iprod{\tilde \Sigma^{-1}(\mu_1') \Delta}{\Delta} &\le 3 \iprod{\tilde \Sigma^{-1}(\mu_2)
\Delta}{\Delta}	\label{eq:exp_dj5}\\
 \iprod{\tilde \Sigma^{-1}(\mu_2') \Delta}{\Delta} &\le {3\over 2} \iprod{\tilde \Sigma^{-1}(\mu_2) \Delta}{\Delta} 
	\label{eq:exp_dj6}\,.
\end{align}
Hence, the left-hand side in~\eqref{eq:exp_dj3} is upper-bounded by a constant multiple of $\Iprod{\tilde \Sigma(\mu_2)
\Delta}{\Delta}$, which, in view of~\eqref{eq:exp_dj_pr}, shows~\eqref{eq:exp_dj4} and, hence,~\eqref{eq:exp_p2}.

We complete the proof by showing~\eqref{eq:bbj_3}. Notice that $\omega_H(1/\sqrt{n}) \le \omega_H(c_0/\sqrt{\lfloor c_0^2 n \rfloor})$ and then
from~\eqref{eq:exp_main} we have for all $n \ge 2/c_0^2$ and $c_4 = {\sqrt{2}\over c_0}$:
$$ \omega_H(1/\sqrt{n}) \le \omega_d({1\over \sqrt{\lfloor c_0^2 n \rfloor}}) \le \omega_d(c_4/\sqrt{n}) 
\stepa{\le} {c_4\over c_2 c_3} \omega_d(c_2 c_3/\sqrt{n}) \stepb{\le} {c_4\over c_2 c_3} \omega_H(c_3/\sqrt{n})\,,$$
where (a) follows from \eqref{eq:exp_p2b} with $c={c_2c_3\over c_4}$;
(b) follows from~\eqref{eq:exp_p2} and~\eqref{eq:bbj_1}. 
	This completes the proof
	of~\eqref{eq:bbj_3}.
\end{proof}

\begin{proof}[Proof of \prettyref{eq:exp_ajn} $\iff$ \prettyref{eq:exp_a2star}]
		To show this equivalence, first notice the representation
	\begin{equation}\label{eq:aj_1}
			C(\gamma + a) - C(\gamma) = \iprod{\mu_f(\gamma)}{a} + \int_0^1 (1-s)
		a^T \Sigma(\gamma + sa) a \, ds 
	\end{equation}		
	since $\nabla C(\gamma) = \mu_f(\gamma)$ and $\mathrm{Hess}\, C(\gamma)=\Sigma(\gamma)$ as in
	\prettyref{eq:moments-exp}. Thus, from here \prettyref{eq:exp_a2star} clearly imply~\eqref{eq:exp_ajn} by virtue of
	\begin{align}\label{eq:aj_2}
		a^T \Sigma(\gamma) a = \Var_{P_\gamma}[\iprod{\phi(X)}{a}] \,.
	\end{align}	
	Conversely,~\eqref{eq:exp_ajn} implies that the function
		$$ \xi \mapsto f_\epsilon(\xi) \eqdef {1\over \epsilon} \{ C(A(\xi) + \epsilon a) - C(A(\xi))\} $$
	is concave for all $\epsilon > 0$. Taking the limit $\epsilon \to 0+$, cf.~\eqref{eq:aj_1}, 
	we conclude that $\xi \mapsto \iprod{\mu_f(A(\xi))}{a}$ is
	concave for any $a$ (in particular, for $-a$ as well), and hence $\xi \mapsto \mu_f(A(\xi))$ must be affine.
	Continuing, again from~\eqref{eq:exp_ajn} we must have that
		$$ \xi \mapsto g_\epsilon(\xi) \eqdef {1\over \epsilon} (f_\epsilon(\xi) - \iprod{\mu_f(A(\xi))}{a}) $$
	is concave for any $\epsilon\neq 0$. Taking the limit as $\epsilon\to 0$, cf.~\eqref{eq:aj_1}, we conclude that
	$ \xi \mapsto a^T \Sigma(A(\xi)) a $
	must be concave, which implies the second claim in \eqref{eq:exp_a2star} in view of \eqref{eq:aj_2}. 
\end{proof}

\section*{Acknowledgment}
Y.~Wu is supported in part by the NSF Grant CCF-1749241, an NSF CAREER award CCF-1651588, and an Alfred Sloan
fellowship. The research of Y. Polyanskiy was supported by the Center for Science of Information (CSoI),
an NSF Science and Technology Center, under grant agreement CCF-09-39370, by the MIT-IBM Watson AI Lab and the USAF-MIT
AI Accelerator.

We thank Prof.~A.~Rakhlin and Prof. A.~Tsybakov for pointing out~\cite{DL91} and~\cite{JN09}, respectively. 
We thank Prof.~A.~Juditsky for many discussions and suggestions. 
We also thank the Associate Editor and the anonymous referees for comments that helped rewrite our paper.


\appendix

\section{Classical applications}
	\label{app:classical}

\subsection{Density estimation}
\label{app:density}

As an application of \prettyref{th:linear}, we consider the classical problem of density estimation under smoothness conditions. 
For simplicity, we focus on the one-dimensional setting where $\pi$ is a distribution on $[-1,1]$ with density 
$\rho$ belonging to the H\"older class $\calP(\beta,L)$ (with $0 < \beta \leq  1$), 
namely, $|\rho(x)-\rho(y)| \leq L |x-y|^\beta$ for any $x,y \in [-1,1]$. 
Given $n$ iid observations drawn from $\rho$, the goal is to estimate the value of the density at point zero $\rho(0)$. 

We now verify that this setting fulfills the assumptions of \prettyref{th:linear}.
First, we have $\Theta=\matx=[-1,1]$ and $P$ is the identity kernel: $P(x,E) = 1\{x\in E\}$. We take $\matf=C[-1,1]$ to be all continuous
functions on $[-1,1]$. Note that by identifying a measure $\pi$ on $[-1,1]$ with its density $\rho$, we can set
$T(\pi)=\rho(0)$ and view $\Pi$ as a subset of $C[-1,1]$:
$$ \Pi = \{\rho \in C[-1,1]: |\rho(x)-\rho(y)| \le L|x-y|^\beta\}\,.$$
If we endow $\Pi$ and $C[-1,1]$ with the topology of uniform convergence, then $\Pi$ becomes a closed convex subset of
$C[-1,1]$ and the Arzela-Ascoli theorem~\cite[IV.6.7]{DS58} implies that $\Pi$ is in fact compact. Finally, it is clear
that $\rho \mapsto \rho(0)$, $\rho \mapsto \int_{[-1,1]} \rho(x) f(x) dx$ and $\rho \mapsto \int_{[-1,1]} \rho(x) f^2(x)
dx$ are all continuous on $\Pi$ for any $f\in C[-1,1]$. 

So all assumptions A1-A4 of the theorem are satisfied and the minimax quadratic risk is determined within absolute constant factors by $\delchi(\frac{1}{\sqrt{n}})^2$.
It is well-known that the modulus continuity here satisfies the following:
\begin{lemma}
\label{lmm:delchi-density}	
There exist constants $c_0,c_1$ depending on $\beta$ and $L$, such that for all $t>0$,
	\[
	c_0 t^{\frac{2\beta}{2\beta+1}}\leq \delchi(t) \leq c_1 t^{\frac{2\beta}{2\beta+1}}.
	\]
\end{lemma}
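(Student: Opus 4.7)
The plan is to establish both bounds by classical kernel-smoothing and bump-function constructions, using that normalization of densities forces a uniform $L^\infty$-bound on $\Pi$. Identifying $\pi$ with its density $\rho$, write $\Delta=\rho'-\rho$, so that $\int \Delta^2/\rho=\chi^2(\rho'\|\rho)$.

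For the upper bound, fix a bounded nonnegative kernel $K\in C_c([-1,1])$ with $\int K=1$ and a bandwidth $h>0$ to be optimized, set $K_h(x)=h^{-1}K(x/h)$, and decompose
\[
\rho'(0)-\rho(0)=\int K_h(x)\Delta(x)\,dx+\int K_h(x)(\rho(x)-\rho(0))\,dx-\int K_h(x)(\rho'(x)-\rho'(0))\,dx.
\]
The last two terms are each bounded by $Lh^\beta$ via the Hölder condition and $\int K_h=1$. For the first, Cauchy--Schwarz gives $|\int K_h\Delta|\le \sqrt{\int K_h^2\rho}\cdot\sqrt{\chi^2(\rho'\|\rho)}\le t\sqrt{\|\rho\|_\infty\|K\|_2^2/h}$. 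Since $\int_{-1}^1\rho=1$, there exists $x_0\in[-1,1]$ with $\rho(x_0)\le 1/2$; Hölder continuity then yields a constant $M=M(\beta,L)$ with $\|\rho\|_\infty\le M$ uniformly over $\Pi$. Combining, $|\rho'(0)-\rho(0)|\lesssim h^\beta+t/\sqrt{h}$, and the choice $h\asymp t^{2/(2\beta+1)}$ gives $\delchi(t)\le c_1 t^{2\beta/(2\beta+1)}$.

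For the lower bound, take the baseline $\rho_0\equiv 1/2$ on $[-1,1]$ and perturb by $\rho_1=1/2+\phi_h$, where $\phi_h(x)=\kappa L h^\beta\,\psi(x/h)$ and $\psi\in C_c^\infty([-2,2])$ is a fixed profile with $\psi(0)=1$, $\int\psi=0$, and bounded Hölder seminorm. The condition $\int\psi=0$ keeps $\int\rho_1=1$; taking $\kappa$ small and $h$ small ensures $\rho_1\ge 0$ and $\rho_1\in\calP(\beta,L)$. Direct computation gives $\rho_1(0)-\rho_0(0)=\kappa L h^\beta$ and $\chi^2(\rho_1\|\rho_0)=2\int\phi_h^2\,dx\asymp h^{2\beta+1}$, so choosing $h\asymp t^{2/(2\beta+1)}$ renders the pair feasible while producing a separation of order $t^{2\beta/(2\beta+1)}$, yielding $\delchi(t)\ge c_0 t^{2\beta/(2\beta+1)}$.

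The only delicate ingredient is the uniform $L^\infty$-bound on elements of $\Pi$, which couples the pointwise Hölder property to the normalization $\int\rho=1$ and is what allows the Cauchy--Schwarz step in the upper bound to close without an extraneous $\rho(0)$ factor. Everything else is a routine bias–variance calculation with constants depending only on $\beta$ and $L$.
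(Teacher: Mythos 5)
Your proof is correct and your upper bound takes a genuinely different route from the paper's. The paper argues directly on the shape of $|\rho-\rho'|$: since this is a nonnegative $(\beta,2L)$-H\"older function taking the value $\epsilon$ at $0$, it dominates the tent $\max\{\epsilon-2L|x|^\beta,0\}$, whence $\|\rho-\rho'\|_2^2\gtrsim \epsilon^{2+1/\beta}$; the uniform sup-norm bound on $\Pi$ then gives $\|\rho-\rho'\|_2^2\lesssim\chi^2(\rho'\|\rho)\le t^2$, and solving for $\epsilon$ finishes. You instead smooth against a kernel $K_h$, split $\rho'(0)-\rho(0)$ into two bias terms of order $Lh^\beta$ plus the term $\int K_h\Delta$, and close the latter by Cauchy--Schwarz against the $\chi^2$ constraint, using the same uniform sup-norm bound only to control $\int K_h^2\rho$. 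Your decomposition is an explicit instantiation of the bias--variance functional $\deltaa$ of \prettyref{prop:delminmax} with $g=K_h$, so it is arguably more aligned with the paper's duality viewpoint, and it exhibits the kernel density estimator as a rate-optimal choice in \prettyref{eq:hatT-emp} for free; the paper's argument is a shorter purely geometric one-liner but does not surface the estimator. Your lower bound is the standard mean-zero bump perturbation of the uniform density, scaled to respect positivity and the H\"older seminorm with $h\asymp t^{2/(2\beta+1)}$; this is essentially the construction the paper intends and you spell out the feasibility checks (mass, positivity, H\"older membership) more carefully. Both arguments hinge on the key observation you correctly isolate -- that normalization plus H\"older continuity forces a uniform sup-norm bound on $\Pi$ -- and both, like the lemma's statement itself, implicitly restrict to $t$ small enough that the optimal scale stays inside $[-1,1]$, which is the regime of interest.
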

\begin{proof}
	For the upper bound, note that any $f\in \calP(\beta,L)$ is everywhere bounded from above by some constant $C=C(\alpha,L)$, thanks to the fact that $f \geq 0$ and $\int f=1$.
Thus, for any $f,g\in \calP(\beta,L)$ such that $|f(0)-g(0)|=\epsilon$ and $\chi^2(f\|g)\leq t^2$, we have $\|f-g\|_2^2 \leq Ct^2$.
Let $p=|f-g|$. Then $p\geq 0$ and $p$ is $(\beta,2L)$-H\"older continuous.
For sufficiently small $\epsilon$, define $h:[-1,1]\to \reals_+$ by 
$h(x) = \max\{\epsilon - 2L |x|^{\beta},0\}$. Then $p \geq h$ on $[-1,1]$ pointwise and hence
\[
Ct^2 \geq \|f-g\|_2^2 \geq \|h\|_2^2 = C' \epsilon^{2+\frac{1}{\beta}}
\]
for some constant $C'$ depending on $(\beta,L)$. This shows the upper bound.
The lower bound follows from choosing $f$ to be the uniform density, and $g(x) = f(x) + c |x|^{\beta} \sign(x) \indc{|x|^{\beta} \leq \epsilon}$, for some small constant $c$ depending on $(\beta,L)$ and $\epsilon = t^{\frac{2\beta}{2\beta+1}}$.	
\end{proof}

Applying \prettyref{th:linear}, we recover the classical result:
\begin{equation}
\inf_{\hat T} \sup_{\rho \in \calP(\beta,L)} \Expect_{X_1,\ldots,X_n \iiddistr f} |\hat T(X_1,\ldots,X_n) - \rho(0)|^2 \asymp n^{-\frac{2\beta}{2\beta+1}}.
\label{eq:densityrate}
\end{equation}
Furthermore, \prettyref{th:linear} ensures that empirical-mean estimators \prettyref{eq:hatT-emp} of the form $\hat T=\frac{1}{n} \sum_{i=1}^n g(X_i)$ are rate optimal for some appropriately chosen function $g$.
Indeed, kernel density estimates are of this form, which achieve the minimax rate for suitably chosen kernel and bandwidth (cf.~e.g.~\cite[Section 1.2]{Tsybakov09}).

\subsection{White Gaussian noise model}
\label{app:whitenoise}

\apxonly{Juditsky says: Lepski figured out that you can get constant $1+o(1)$ as $\sigma\to0$ for estimating max of f.
Donoho~\cite{DL91}
had upper bound / lower bound within 1.25 (and perhaps this is the main contribution compared to~\cite{IH84}?
Juditski-Nemirovsky sweated to get same constant, if I understood correctly; even had to use non-2-pt priors?). 
Someone showed that non-lin estimators are strictly better.}

In this section we revisit the Gaussian white noise model and re-derive the classical result of \cite{IH84,donoho1994statistical} on the rate optimality (within constant factors) of
linear estimators from convex duality.
Let
\begin{equation}
dX_t = f(t)dt + \sigma dB_t, \quad t \in [0,1],
\label{eq:whitenoise}
\end{equation}
where the unknown function $f$ belong to some \emph{convex} set $\calF$. 
Given $X=\{X_t: t \in [0,1]\}$, the goal is to estimate some affine functional $T(f)$ (such as $T(f)=f(1/2)$). 
Define the minimax risk as
$$ R^*(\sigma) \eqdef  \inf_{\hat T} \sup_{f\in\calF}\EE_{f}[ (\hat T(X) - T(f))^2]. $$
Although this is a special case of the $n$-sample exponential family model considered in \prettyref{sec:exp} (with $\sigma=\frac{1}{\sqrt{n}}$),
\prettyref{th:exp} proved for finite dimensions cannot be directly applied. Nevertheless, due to the simple structure of the Gaussian model, Le Cam's lower bound can be dualized explicitly, leading to the rate-optimality of linear estimators. Next we carry out this calculation as a self-contained example.

Consider a linear estimator of the form
\[
\hat T = \int_0^1 g(t)dX_t,
\]
where $g$ is some continuous compactly-supported function to be optimized. 
(Denote all such functions by $C_c$.)
Then the bias and variance are given respectively by 
\begin{align*}
\Expect \hat T - T = & ~ \iprod{f}{g} - T(f) \\
\Var(\hat T) = & ~ 	\sigma^2 \|g\|_2^2.
\end{align*}
To bound the bias, note that, trivially,
\[
\inf_{f \in \calF}  \iprod{f}{g} - T(f) \leq \Expect \hat T - T \leq \sup_{f \in \calF}  \iprod{f}{g} - T(f).
\]
Without loss of generality, we can assume that $\sup_{f \in \calF} \iprod{f}{g} - T(f)\geq 0 \geq \inf_{f \in \calF}  \iprod{f}{g} - T(f)$.\footnote{Suppose $\sup_{f \in \calF}  \iprod{f}{g-h} = \epsilon < 0$, i.e., the estimator is always negatively biased, then replacing $g$ by $g-\epsilon$ improves the bias and retains the same variance.}
Therefore, we have
\[
|\Expect \hat T - T| \leq \sup_{f \in \calF}  \iprod{f}{g} - T(f) + \sup_{f \in \calF}  T(f)-\iprod{f}{g} =  \sup_{f,f' \in \calF}  \iprod{f-f'}{g} + T(f')-T(f).
\]
Optimizing the bias-variance tradeoff over $g$ leads to the following convex optimization problem:
	\begin{align*}	
\sqrt{R^*(\sigma)}
\leq	& ~  \inf_{g \in C_c} \sup_{f,f' \in \calF}  \iprod{f-f'}{g} + T(f')-T(f) + \sigma \|g\|_2\\
	= & ~ \inf_{g\in C_c} \sup_{f,f' \in \calF, \|z\|_2 \leq 1} \iprod{f-f'}{g} + T(f')-T(f) + \sigma \iprod{g}{z} \\
	\overset{(a)}{=} & ~ \sup_{f,f' \in \calF, \|z\|_2 \leq 1} \sth{ T(f')-T(f) + \inf_{g\in C_c} \iprod{f-f' + \sigma z}{g}} \\
	\overset{(b)}{=}  & ~ \sup_{f,f' \in \calF, \|f-f'\|_2 \leq \sigma} T(f')-T(f) \\
	\overset{(c)}{\leq}  & ~ C \sqrt{ R^*(\sigma)},	
	\end{align*}	
	where (a) follows from the minimax theorem (see, e.g., \prettyref{thm:minimax} in \prettyref{sec:linear}); (b) is simply because $ \inf_{g\in C_c} \iprod{f}{g} = -\infty$ if $f\neq 0$ and $0$ if $f=0$; finally,
	(c) follows from Le Cam's two-point lower bound since the KL divergence in the white noise model is given by
\begin{equation}
D(P_f\|P_{f'}) = \frac{1}{2\sigma^2}\|f-f'\|_2^2,
\label{eq:KL-awgn}
\end{equation}
where $P_f$ denotes the law of $\{X_t: t\in[0,1]\}$ as in \prettyref{eq:whitenoise}, and $C$ is an absolute constant.
Thus we have shown that
\begin{equation}\label{eq:ih_gsn}
		\frac{\omega(\sigma) }{C} \le \sqrt{R^*(\sigma)} \le \omega(\sigma)
\end{equation}	
where $\omega(\sigma) \eqdef \sup_{f,f' \in \calF}\{T(f')-T(f): \|f-f'\|_2 \le \sigma\}$ is the modulus of continuity.

\section{Proof of technical results}
	\label{app:lmm}

\begin{proof}[Proof of \prettyref{lmm:horo}] In~\cite[Proposition 9]{PSW17-colt} it is shown for any $d \in \naturals \cup \{\infty\}$,
	\begin{equation}
	\delta_{\TV}(t,d) \le t^{\min(1, {1-\epsilon\over\epsilon})}\,. 
	\label{eq:horo-tv}
	\end{equation}	
	where $\delTV(t,d)$ is defined for the same problem as $\delchi$ but with TV-distance in place of
	$\chi^2$, cf.~\eqref{eq:deltv_def}. From the general relation $\delchi\leq\delTV$	in~\eqref{eq:delta_all} we get~\eqref{eq:horo1}.
	Furthermore, due to~\eqref{eq:delta_superl} and~\eqref{eq:delta_all}, for $\epsilon \le {1\over 2}$ 
	we conclude
			$$ {t\over 2\sqrt{2}} \le \delchi(t,d) \le t. $$

Next, we consider the case of $\epsilon > 1/2$.
The following was shown in \cite[Lemma 12]{PSW17-colt}: 
For every $\delta < \frac{1}{2e}$ and $d \geq \frac{2\epsilon}{1-\epsilon}\ln^2\frac{1}{\delta}$
there exists a pair of probability distributions $\pi$ and $\pi'$ on $\{0,\ldots,d\}$ such that $|\pi(0) - \pi'(0)|\ge
\delta$ and
\begin{equation}\label{eq:bhel}
	H^2(\pi P, \pi' P) \le 36 \pth{e\delta \ln \frac{1}{\delta} }^{\frac{2\epsilon}{1-\epsilon}}.
\end{equation}
	Setting the RHS to $t^2$, we conclude that there exist 	$t_0=t_0(\epsilon)$ and $C=C(\epsilon)$ such that for all $t\leq t_0$ and $d \ge C \ln^2 {1\over t}$, we have
		$$ \delta_{H^2}(t,d) \ge C \left(t\over \ln {1\over t}\right)^{1-\epsilon \over
		\epsilon}\,.$$
	This implies the desired \eqref{eq:horo2} in view of the general inequality $\delta_{\chi^2} \geq \frac{1}{2} \delta_{H^2}$ in~\eqref{eq:delta_all}.
%
\end{proof}

\begin{proof}[Proof of \prettyref{prop:ic}] We aim to apply Theorem~\ref{th:linear}. Assumptions A1 and A2 are verified. For A3 we take $\calF$ to be
the set of all continuous functions on $\calX$ (cf.~the second remark after Theorem~\ref{th:linear}). For A4 we endow 
$\Pi$ with the weak topology. Since $\Pi$ is a set of probability measures on the compact set $[0,1]$, it is tight and
hence weakly compact, establishing A4a. For
A4b we always have that $\pi \mapsto T_m(\pi)$ is weakly continuous, while for $\pi \mapsto T_c(\pi)$ weak continuity is
implied by the assumption (indeed, if $\pi_n \stackrel{w}{\to}\pi$ then $F_{\pi_n}(s_0) \to F_\pi(s_0)$ due to
assumption on $s_0$ being a point of continuity of $F_\pi$). To complete the verification of A4b, we 
need to verify that for any continuous $\phi$ on $\calX$ the functional $\pi P \phi$ is weakly continuous. For example, consider the
case of $i=1$, in which case we have $\phi:[0,1]\times \{0,1\}\to\reals$ and we can represent 
	\begin{equation}\label{eq:ic_j1}
		\pi P \phi = \int_{[0,1]} \pi(d\theta) f(\theta), \quad f(\theta)\triangleq 		
		\int_{[0,1]}da g_1(a) \phi(a,1)1\{\theta \le a\} + \phi(a,0)1\{\theta >
	a\}
\end{equation}	
	We claim that $f$ is continuous. Indeed, if $\theta_n\to \theta$ then $1\{\theta_n > a\} \to
	1\{\theta > a\}$ for almost every $a \in [0,1]$. Hence, from the dominated convergence theorem we also have
	$f(\theta_n)\to f(\theta)$. Thus, the functional in~\eqref{eq:ic_j1} is weakly continuous as integral of a
	continuous function $f$.
\end{proof}

\begin{proof}[Proof of \prettyref{eq:delchi-CL}]
Let $H_k(x)$ denote the degree-$k$ Hermite polynomial and note the fact that for $X\sim N(a,1)$, we have	$\Expect[H_k(X)] = a^k$  and $\var(H_k(X)) = 
k!\sum_{j=0}^{k-1}\binom{k}{j}\frac{a^{2j}}{j!} $. Thus $\var(H_k(X)) \leq k! 2^k$ provided $|a|\leq 1$. Using the variational representation of the $\chi^2$-divergence \prettyref{eq:chi_va}, for any feasible solution $\pi,\pi'$ of \prettyref{eq:delchi-CL-def}, we have
$|m_k(\pi) - m_k(\pi')| \leq \sqrt{k! 2^k t}$, where $m_k(\pi) = \int \theta^k \pi(d\theta)$ denotes the $k$th moment of $\pi$.
By existing results in approximation theory (see \cite{CL11}), there exists a degree-$k$ polynomial $p(x)=\sum_{i=0}^k a_i x^i$ and a constant $C$, such that 
$|a_i| \leq C^k$ and $\sup_{|a|\leq 1} ||a|-p(a)|  \leq \frac{C}{k}$.
Therefore by the triangle inequality, we have
$|\int |\theta| \pi'(d\theta) -\int |\theta| \pi'(d\theta)| \leq \frac{C}{k} + \sqrt{t k! C^k}$. Choosing $k= c \frac{\log \frac{1}{t}}{\log\log\frac{1}{t}}$ for some small constant $c$ proves the upper bound of \prettyref{eq:delchi-CL}.

To show the lower bound part, by the duality between best polynomial approximation 
and moment matching (see e.g.~\cite[Appendix E]{WY14}), there exist $\pi,\pi' \in \calP([-1,1])$ such that $m_i(\pi)=m_i(\pi')$ for $i=1,\ldots,k$, and 
$\int |\theta| \pi'(d\theta) -\int |\theta| \pi'(d\theta) = 2 \inf_{\deg(p)=k} \sup_{|a|\leq 1} ||a|-p(a)| \geq \frac{c}{k}$, where the last inequality is well-known in the approximation theory literature \cite{CL11}. Furthermore, matching first $k$ moments implies that the corresponding Gaussian mixture are close in $\chi^2$-divergence \cite{CL11}: $\chi^2(\pi'*N(0,1)\|\pi*N(0,1)) \le \frac{C^k}{k!}$. Choosing $k= c \frac{\log \frac{1}{t}}{\log\log\frac{1}{t}}$ for some large constant $c$ proves the desired lower bound.	
\end{proof}

\section{Risks for Fisher's species problem with or without Poissonization}
\label{app:species-poisson}

For fixed sample sizes $(n,m)$, define the minimax quadratic risk for estimating $U=U_{n,m}$ as 
\[
\tilde R^*(n,m) \triangleq \inf_{\hat U} \sup_P \Expect_P[(\hat U-U)^2].
\]	
and $R^*(n,m)$ for the Poissonized model with sample sizes $(N,M)$ distributed independently as $\Poi(n)$ and $\Poi(m))$. 
Also denote their normalized version by $\tilde \calE_n(r) = \tilde R^*(n,m)/m^2$  and $\calE_n(r) = R^*(n,m)/m^2$ (with $r=m/n$), the latter of which is addressed by \prettyref{thm:species}.
Nevertheless, the next result shows that $\tilde \calE_n(r)$ satisfies the same upper and lower bounds \prettyref{thm:species} up to an additional $O\pth{\frac{\log n}{n}}$ term.
The proof of this lemma is standard (cf.~\cite[Appendix A]{WY14}).

\begin{lemma}
\label{lmm:species-poi}	
	Let $r=m/n$ be a constant. 
	Let $\alpha = 1/\log n$. Then for large $n$,
	\begin{equation}
	\frac{1}{2}\tilde R^*(n(1+\alpha),m) - O(n \log n) \leq  R^*(n,m) \leq 2 \tilde R^*(n(1-\alpha),m) + O(n \log n).
	\label{eq:species-poi}
	\end{equation}
\end{lemma}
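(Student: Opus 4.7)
This is the standard Poissonization argument. It rests on two facts: (a) the Poisson tail bound $\Pr[|\Poi(n) - n| \geq n\alpha] \leq 2\exp(-c n \alpha^2)$, which for $\alpha = 1/\log n$ is super-polynomially small in $n$; and (b) the Poisson-multinomial equivalence: conditional on $N \triangleq \sum_x N_x$, the Poissonized histogram is $\mathrm{Mult}(N, P)$-distributed, identical to the fixed-sample model with $N$ observations.

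\textbf{Upper bound plan.} Let $\hat U^{\rm fix}_{n',m}$ denote a (near-)optimal fixed-sample estimator for each $n'$. I would construct a sample-size-adaptive Poissonized estimator
\[
\hat U^{\rm poi}(\{N_x\}) \triangleq \hat U^{\rm fix}_{N,m}(\{N_x\}), \qquad N \triangleq \sum_x N_x.
\]
By (b), conditional on $N$ this has precisely the fixed-$N$ input distribution, so $\Expect[(\hat U^{\rm poi} - \tilde U_{N,m})^2 \mid N] \leq \tilde R^*(N,m)$, where $\tilde U_{N,m}$ denotes the fixed-model target with $N$ observed and $m$ future samples (sharing the observed data). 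Since $\tilde R^*(\cdot,m)$ is non-increasing in the sample size, combining with (a) yields
\[
\Expect_N[\tilde R^*(N,m)] \leq \tilde R^*(n(1-\alpha), m) + m^2 \exp(-c n \alpha^2),
\]
whose second term is negligible for $\alpha = 1/\log n$. The residual target mismatch $\tilde U_{N,m} - U^{\rm poi}$ is due only to the future sample-size difference ($m$ vs.\ $M \sim \Poi(m)$); coupling the two futures via a common iid sequence gives $|\tilde U_{N,m} - U^{\rm poi}| \leq |M - m|$, contributing $\Expect|M-m|^2 = m$ in squared expectation.

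\textbf{Lower bound plan.} I would reverse the construction: given fixed data $X_1, \ldots, X_{\tilde n}$ with $\tilde n = \lceil n(1+\alpha)\rceil$, draw $N \sim \Poi(n)$ independently, and on the event $A = \{N \leq \tilde n\}$ (of probability $1 - o(n^{-k})$ for any $k$) apply the optimal Poissonized estimator $\hat U^{\rm poi}_{n,m}$ to the histogram of $X_1, \ldots, X_N$. By (b), this input has exactly the Poisson-model distribution on $A$. The decomposition
\[
(\hat U^{\rm poi}_{n,m} - \tilde U)^2 \leq 2(\hat U^{\rm poi}_{n,m} - U^{\rm poi})^2 + 2(U^{\rm poi} - \tilde U)^2
\]
reduces the risk to $2R^*(n,m)(1+o(1))$ plus the target-coupling error, completing the symmetric argument.

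\textbf{Main obstacle.} The crux is bounding $\Expect[(U^{\rm poi}-\tilde U)^2]$. Since $U$ changes by at most one upon adding or removing any single observed or future sample, the iid coupling of both models yields $|U^{\rm poi} - \tilde U| \leq |N - \tilde n| + |M - m|$, whence $\Expect[(U^{\rm poi}-\tilde U)^2] = O(n + m + \alpha^2 n^2)$. This drift term is the delicate point: it contributes the additive error in the statement, and in the regime of \prettyref{thm:species} is of lower order than the leading rate $m^2 n^{-2/(r+1)}\log^4 n$, so it does not affect the conclusion. The Poisson tail on the complement of $A$ is handled by a trivial $m^2$-bound times a super-polynomially small probability.
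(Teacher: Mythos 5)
Your upper-bound plan (constructing a size-adaptive Poissonized estimator from fixed-sample estimators) is correct: the only target mismatch there is in the future sample size, since you design $\hat U^{\rm fix}_{N,m}$ for exactly the observed $N$; the coupling cost is $\Expect[(M-m)^2]=m=O(n)$ and the rest follows from monotonicity of $\tilde R^*(\cdot,m)$ plus a Poisson tail.

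The gap is in the lower-bound plan, and it is quantitative but fatal. Your coupling bound gives
\[
\Expect\bigl[(U^{\rm poi}-\tilde U)^2\bigr] \;=\; O(n+m+\alpha^2 n^2),
\]
and with $\alpha = 1/\log n$ the third term is $n^2/\log^2 n$, which is \emph{much larger} than the $O(n\log n)$ claimed in the lemma. It is also \emph{larger} than the target risk $m^2 n^{-2/(r+1)}\log^4 n = r^2 n^{2-2/(r+1)}\log^4 n$ from Theorem~\ref{thm:species}: the ratio is $n^{2/(r+1)}/\log^6 n \to \infty$ for every fixed $r$. So your assertion that the drift term ``is of lower order than the leading rate'' is exactly backwards; the coupling error would swamp the result you are trying to de-Poissonize. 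The culprit is the squared bias $\bigl(\tilde n - \Expect N\bigr)^2 = (\alpha n)^2$: an unbiased-in-$n$ coupling would cost only the Poisson variance $n$, but the deliberate offset $\tilde n = n(1+\alpha)$ (needed to make $\prob{N>\tilde n}$ small) introduces a mean shift you then pay for quadratically.

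The fix, which is what the paper does, is to \emph{not} couple the targets in the $n$ variable. Instead, pass to Bayes risk: for any prior $\pi$, the Poissonized Bayes risk decomposes exactly as $R_\pi^*(n,m)=\Expect_{N\sim\Poi(n),\,M\sim\Poi(m)}[\tilde R_\pi^*(N,M)]$. Take $\pi$ to be a least-favorable prior for the fixed-sample problem at size $\tilde n$, so $\tilde R_\pi^*(\tilde n,m)=\tilde R^*(\tilde n,m)$, and use \emph{monotonicity} of $n'\mapsto \tilde R_\pi^*(n',m)$: on the high-probability event $\{N\le \tilde n,\ |M-m|\le\Delta\}$ you get $\tilde R_\pi^*(N,M)\ge \tilde R_\pi^*(\tilde n,M)\ge (\sqrt{\tilde R^*(\tilde n,m)}-\Delta)_+^2$. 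The mean shift $\alpha n$ in $N$ thus costs nothing except a super-polynomially small failure probability $\prob{N>\tilde n}\le e^{-\Omega(\alpha^2 n)}$; the Lipschitz property $|\sqrt{\tilde R^*(n,m)}-\sqrt{\tilde R^*(n,m')}|\le|m-m'|$ is invoked only for the $m$ variable, with truncation $\Delta=C\sqrt{n\log n}$ producing, after AM--GM on the cross term, the additive error $2\Delta^2 = O(n\log n)$. This monotonicity-instead-of-coupling idea in the sample-size direction is the missing ingredient in your plan.
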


\begin{proof}
	Note the following facts about the risk $	R^*(n,m)$:
	\begin{enumerate}
		\item 	$n\mapsto R^*(n,m)$ is non-increasing.
		
		\item  $0 \leq 	R^*(n,m) \leq 	R^*(0,m) \leq m^2$.
		
		\item $	|\sqrt{R^*(n,m)} - \sqrt{R^*(n,m')}| \leq |m-m'|$, due to the fact that $|U_{n,m}-U_{n,m'}| \leq |m-m'|$.
	\end{enumerate}
	
	For the left inequality, 
	let $N\sim \Poi((1+\alpha)n)$ and $M\sim \Poi(m)$. Set $\Delta = C \sqrt{n \log n}$ for some large constant $C$. 
	Using the Chernoff bound for Poisson, we have
    \begin{align*}
        \tilde R^*((1+\alpha)n, m)
        &\leq  \Expect[R^*(N,M)] \leq \Expect[R^*(n,M)] + O(m^2) \prob{N<n} \\
        &\leq  \sum_{|m'-m|\leq \Delta} R^*(n,m') \prob{M=m'} + O(m^2) (\prob{N>n} + \prob{|M-m|>\Delta}) \\
        &\le (\sqrt{R^*(n,m)} + \Delta)^2 + O(m^2 (e^{-\Omega(\alpha^2 n)} + e^{-\Omega(\Delta^2/n)})).
    \end{align*}
   
	For the right inequality,  it suffices to consider the Bayes risk. Note that for any fixed prior $\pi$, the Bayes risks with fixed or Poissonized sample size, denoted by $\tilde R_\pi^*(n,m)$ and $R_\pi^*(n,m)$, is related by the identity $R_\pi^*(n,m)  = \Expect[R_\pi^*(\Poi(n),\Poi(m))]$. Let $N\sim\Poi((1-\alpha)n)$ and $M\sim\Poi(m))$ be independent. Then
\begin{align*}
R_\pi^*((1-\alpha)n,m) 
\geq & ~ \Expect[R_\pi^*(N,M) \indc{N' \leq n, |M-m| \leq \Delta}  ] \\
\geq & ~ \Expect[R_\pi^*(n,M)\indc{|M-m| \leq \Delta}  ] \prob{N' \leq n} \\
\geq & ~ \max\{\sqrt{R_\pi^*(n,m)} - \Delta,0\}^2 (1-e^{-\Omega(\alpha^2 n)} - e^{-\Omega(\Delta^2/n)})).
\end{align*}
This completes the proof.	
\end{proof}


\end{document}